\crefname{hypothesis}{Hypothesis}{Hypotheses}
\title{Piecewise Divergence-Free Nonconforming Virtual Elements for Stokes Problem in Any Dimensions\thanks{%Submitted to the editors \today.
The first and third authors were supported by NSFC (11871413) and in part by projects
of Education Department of Hunan Provincial of China (19B534, 19A500). The second author was supported by the NSFC (11771338, 12071289), and the Fundamental Research Funds for the Central
Universities (2019110066).}}
\author{Huayi Wei\thanks{Hunan Key Laboratory for Computation and Simulation in Science and Engineering; School of Mathematics and Computational Science,
Xiangtan University, Xiangtan 411105, P.R.China 
  (\email{weihuayi@xtu.edu.cn}, \email{201721511166@smail.xtu.edu.cn}).}
\and 
Xuehai Huang\thanks{Corresponding author. School of Mathematics, Shanghai University of Finance and Economics, Shanghai 200433, China 
  (\email{huang.xuehai@sufe.edu.cn}).}
\and 
Ao Li\footnotemark[2]}
  \newcounter{mnote}
  \let\oldmarginpar\marginpar
    \renewcommand\marginpar[1]{\-\oldmarginpar[\raggedleft\footnotesize #1]%
    {\raggedright\footnotesize #1}}
\newtheorem{example}[theorem]{Example}
\newcommand{\dd}{\,{\rm d}}
\newcommand{\bs}{\boldsymbol}
\newcommand{\curl}{\operatorname{curl}}
\renewcommand{\div}{\operatorname{div}}
\DeclareMathOperator*{\tr}{tr}
\newcommand{\dev}{\operatorname{dev}}
\numberwithin{equation}{section}
\begin{document}

\maketitle

% REQUIRED
\begin{abstract}
Piecewise divergence-free nonconforming virtual elements are designed for Stokes problem in any dimensions.
After introducing a local energy projector based on the Stokes problem and the stabilization,
a divergence-free nonconforming virtual element method is proposed for Stokes problem.
A detailed and rigorous error analysis is presented for the discrete method.
An important property in the analysis is that the local energy projector commutes with the divergence operator. 
With the help of a divergence-free interpolation operator onto a generalized Raviart-Thomas element space, a pressure-robust nonconforming virtual element method is developed by simply modifying the right hand side of the previous discretization.
A reduced virtual element method is also discussed.
Numerical results are provided to verify the theoretical convergence.
\end{abstract}

% REQUIRED
\begin{keywords}
Stokes problem, divergence-free nonconforming virtual elements, local energy projector, pressure-robust virtual element method, reduced virtual element method
\end{keywords}

% REQUIRED
\begin{AMS}
  76D07, 65N12, 65N22, 65N30
\end{AMS}

\section{Introduction}

In this paper, we shall construct piecewise divergence-free nonconforming virtual elements for Stokes problem in any dimensions.
Assume that $\Omega\subset \mathbb{R}^d~(d\geq 2)$ is a bounded polytope.
%In this paper we shall construct divergence-free nonconforming virtual elements for Stokes problem in any dimensions
The Stokes problem is governed by
\begin{equation}\label{eq:stokes}
\begin{cases}
-\div(\nu\boldsymbol{\varepsilon}(\bs u)) - \nabla p = \bs f & \textrm{ in } \Omega,\\
\qquad\qquad\qquad \div \bs u=0 & \textrm{ in } \Omega,\\
\qquad\qquad\qquad\quad\;\; \bs u=\bs 0 & \textrm{ on } \partial\Omega,
\end{cases}
\end{equation}
where $\bs u$ is the velocity field, $p$ is the pressure, $\boldsymbol{\varepsilon}(\bs u):=(\nabla\bs u + (\nabla\bs u)^{\intercal} )/2$ is the symmetric gradient of $\bs u$, $\bs f\in \bs L^2(\Omega;\mathbb R^d)$ is the external force field, and constant $\nu>0$ is the viscosity.
The incompressibility constraint $\div\bs u=0$ in \eqref{eq:stokes} describes the conservation of mass for the incompressible fluid.
%The viscosity $\nu$ is bounded, but can be very close to $0$.

Since the nonconforming $P_1$-$P_0$ element is a stable pair for the Stokes problem~\cite{CrouzeixRaviart1973},
as the generalization of the nonconforming $P_1$ element, it is spontaneous that the $H^1$-nonconforming virtual element in \cite{AyusodeDiosLipnikovManzini2016} is adopted to discretize the
Stokes problem in \cite{CangianiGyryaManzini2016,LiuLiChen2017}.
On the other hand, the incompressibility constraint is not satisfied exactly in general at the discrete level for the discrete methods in \cite{CangianiGyryaManzini2016,LiuLiChen2017}, 
which is very important for the Navier-Stokes problem \cite{JohnLinkeMerdonNeilanEtAl2017,CharnyiHeisterOlshanskiiRebholz2017}.
To design the discrete method with the exact divergence-free discrete velocity, one idea is to combine the discontinuous Galerkin technique and the $H(\div)$-conforming virtual elements, such as the divergence-free weak virtual element method \cite{ChenWang2019}.
The more compact idea in \cite{BeiraodaVeigaLovadinaVacca2017,BeiraodaVeigaDassiVacca2020,AntoniettiBeiraoMoraVerani2014} is to construct divergence-free conforming virtual elements in two and three dimensions by defining the space of shape functions through the local Stokes problem with Dirichlet boundary condition.
By enriching an $H(\div)$-conforming virtual element with some divergence-free functions,
a divergence-free nonconforming virtual element in two dimensions is advanced in \cite{ZhaoZhangMaoChen2019}, in which each element in the partition is required to be convex.
% We refer to \cite{CaceresGatica2017a} for a virtual element method based on the pseudostress-velocity formulation.

Following the ideas in \cite{ChenHuang2020,Huang2020}, we shall devise piecewise divergence-free $H^1$-nonconforming virtual elements in any dimensions based on the generalized Green's identity for Stokes problem, which are also $H(\div)$-nonconforming. The degrees of freedom of the proposed virtual elements for the velocity are same as those in \cite{CangianiGyryaManzini2016}, i.e. $d$ copies of the degrees of freedom of the $H^1$-nonconforming virtual elements in~\cite{AyusodeDiosLipnikovManzini2016}.
And the space of shape functions $\bs V_k(K)$ for the velocity is defined from the local Stokes problem with Neumann boundary condition, which is different from that in \cite{BeiraodaVeigaLovadinaVacca2017} due to the constraint on the boundary. Our virtual elements are locally divergence-free since $\div\bs V_k(K)=\mathbb P_{k-1}(K)$. The divergence-free velocity means the mass conservation. It is pointed out in \cite{CharnyiHeisterOlshanskiiRebholz2017} that many important conservation laws are lost with the loss of mass conservation, including energy, momentum, angular momentum. A common theme for all ‘enhanced-physics’ based schemes is that the more physics is built into the discretization, the more accurate and stable the discrete solutions are, especially over longer time intervals \cite{CharnyiHeisterOlshanskiiRebholz2017}.

A novelty of this paper is to introduce a local energy projector $\bs\Pi_k^K: \bs H^1(K; \mathbb R^d)\to\mathbb P_k(K; \mathbb R^d)$ based on the Stokes problem:
\begin{align*}
(\bs\varepsilon(\bs\Pi_k^K\bs w), \bs\varepsilon(\bs v))_K + (\div\bs v,P^K\bs w)_K &=(\bs\varepsilon(\bs w), \bs\varepsilon(\bs v))_K\quad  \forall~\bs v\in \mathbb P_k(K; \mathbb R^d),\\
(\div(\bs\Pi_k^K\bs w), q)_K&=(\div\bs w, q)_K\quad\;\quad\forall~q\in\mathbb P_{k-1}(K),
\end{align*}
while the local $H^1$ projector is adopted in all the previous papers.
The local Stokes-based projector $\bs\Pi_k^K$ commutes with the divergence operator.
%A property of the local projector $\bs\Pi_k^K$ is that
% \begin{equation}\label{eq:20200114-1}
% \div(\bs w-\bs\Pi_k^K\bs w)=0\quad\forall~\bs w\in \bs V_k(K).
% \end{equation}
Then we define a stabilization involving all the degrees of freedom of the virtual elements for the velocity except those corresponding to $\mathbb G_{k-2}(K):=\nabla\mathbb P_{k-1}(K)$.
With the help of the local projector $\bs\Pi_k^K$ and the stabilization, we propose a piecewise divergence-free nonconforming virtual element method for Stokes problem, where the velocity is discretized by the virtual elements and the pressure is discretized by the piecewise polynomials.
Differently from \cite{BeiraodaVeigaLovadinaVacca2017,CangianiGyryaManzini2016,FrerichsMerdon2020}, the computable projection $\bs\Pi_k^K\boldsymbol u_h$ in this paper is divergence-free on each element $K$.

Furthermore, applying the technique in \cite{BeiraodaVeigaLovadinaVacca2017}, we remove the degrees of freedom corresponding to $\mathbb G_{k-2}(K)$ for the velocity, reduce the space of shape functions $\bs V_k(K)$ to $\bs{\widetilde V}_k(K)=\{\bs v\in\bs V_k(K): \div\bs v\in\mathbb P_{0}(K)\}$, and then derive the reduced virtual element method, in which the pressure is discretized by piecewise constant functions. Hence we can first acquire the discrete velocity by solving the reduced discrete method, and then recover the discrete pressure elementwisely.

A detailed and rigorous error analysis is presented for the piecewise divergence-free nonconforming virtual element method.
% We first prove the inverse inequality
% \begin{equation}\label{eq:20200114-2}
% h_K\|\div\boldsymbol{\varepsilon}(\bs v)+\nabla s\|_{0,K}\lesssim \|\bs\varepsilon(\bs v)\|_{0,K}\quad\forall~\bs v\in \bs V_k(K)
% \end{equation}
% by using the fact that $\|\bs\curl\cdot\|_{0,K}$ is a norm on the finite-dimensional space $\mathbb G_{k-2}^{\oplus}(K)$,  where $\mathbb G_{k-2}^{\oplus}(K)\subseteq \mathbb P_{k-2}(K; \mathbb R^d)$ satisfies
% $\mathbb P_{k-2}(K; \mathbb R^d)=\mathbb G_{k-2}^{\oplus}(K)\oplus \mathbb G_{k-2}(K)$.
% Then we derive the norm equivalence of the stabilization on the kernel of the local projector $\bs\Pi_k^K$ from
% \eqref{eq:20200114-1} and \eqref{eq:20200114-2}.
We first prove the norm equivalence of the stabilization on the kernel of the local projector $\bs\Pi_k^K$.
Then the interpolation error estimate is acquired after setting up the Galerkin orthogonality of the interpolation operator.
With the norm equivalence of the stabilization and the interpolation error estimate, we build up the discrete inf-sup condition, and thus the piecewise divergence-free nonconforming virtual element method is well-posed. Finally the optimal error estimate comes from the discrete inf-sup condition and the interpolation error estimate in a standard way.

Following the ideas in \cite{Linke2014,JohnLinkeMerdonNeilanEtAl2017}, we devise a pressure-robust nonconforming virtual element method for the Stokes problem~\eqref{eq:stokes} by modifying the right hand side of the previous discrete method.
We first define a generalized Raviart-Thomas element space $\widetilde{\bs {RT}}_h$ based on the partition $\mathcal T_h$ by extending the Raviart-Thomas element \cite{RaviartThomas1977,Nedelec1980,ArnoldFalkWinther2006} on simplices to polytopes.
And introduce a divergence-free interpolation operator $\bs I_h^{RT}: \bs V_h\to \widetilde{\bs {RT}}_h$ satisfying
\begin{equation}\label{intro:IhRTdivfree}
\div(\bs I_h^{RT}\bs v_h)=\div_h\bs v_h\quad\forall~\bs v_h\in \bs V_h.
\end{equation}
Property \eqref{intro:IhRTdivfree} is vital to derive the pressure-robust error estimate for velocity, which is true for our divergence-free virtual element, but not the case for the virtual element in \cite{CangianiGyryaManzini2016}. Then replace $\langle\bs f, \bs v_h\rangle$ by $(\bs f, \bs I_h^{RT}\bs v_h)$ to get the pressure-robust discretization. 
Very recently a pressure-robust conforming virtual element method for Stokes problem in two dimensions is proposed in \cite{FrerichsMerdon2020} by employing a similar idea, while the computable $\bs\Pi_k^K\boldsymbol{u}_h$ in \cite{FrerichsMerdon2020} is not divergence-free.

The rest of this paper is organized as follows. In Section~2, we present some notation and inequalities. The divergence-free nonconforming virtual elements, local energy projector, stabilization and interpolation operator are constructed in Section~3.
We show the divergence-free nonconforming virtual element methods for the Stokes problem and the error analysis in Section~4. A reduced virtual element method is given in Section~5.
In Section~6, numerical results are provided to verify the theoretical convergence.

\section{Preliminaries}

\subsection{Notation}

Denote by $\mathbb{M}$ the space of all $d\times d$ tensors, $\mathbb{S}$ the space of all symmetric $d\times d$ tensors, and $\mathbb{K}$ the space of all skew-symmetric $d\times d$ tensors. Denote the deviatoric part and the trace of the tensor $\bs\tau$ by $\dev\bs\tau$ and $\tr\bs\tau$ accordingly,  then we have
\[
\dev\bs\tau=\bs\tau-\frac{1}{d}(\tr\bs\tau)\bs I.
\]
Given a bounded domain $K\subset\mathbb{R}^{d}$ and a
non-negative integer $m$, let $H^m(K)$ be the usual Sobolev space of functions
on $K$, and $\boldsymbol{H}^m(K; \mathbb{X})$ be the usual Sobolev space of functions taking values in the finite-dimensional vector space $\mathbb{X}$ for $\mathbb{X}$ being $\mathbb{M}$, $\mathbb{S}$, $\mathbb{K}$ or $\mathbb{R}^d$. The corresponding norm and semi-norm are denoted respectively by
$\Vert\cdot\Vert_{m,K}$ and $|\cdot|_{m,K}$. Let $(\cdot, \cdot)_K$ be the standard inner product on $L^2(K)$ or $\boldsymbol{L}^2(K; \mathbb{X})$. If $K$ is $\Omega$, we abbreviate
$\Vert\cdot\Vert_{m,K}$, $|\cdot|_{m,K}$ and $(\cdot, \cdot)_K$ by $\Vert\cdot\Vert_{m}$, $|\cdot|_{m}$ and $(\cdot, \cdot)$,
respectively. Let $\boldsymbol{H}_0^m(K; \mathbb{R}^d)$ be the closure of $\boldsymbol{C}_{0}^{\infty}(K; \mathbb{R}^d)$ with
respect to the norm $\Vert\cdot\Vert_{m,K}$.
For integer $k\geq0$,
notation $\mathbb P_k(K)$ stands for the set of all
polynomials over $K$ with the total degree no more than $k$. Set $\mathbb P_{-1}(K)=\mathbb P_{-2}(K)=\{0\}$.
And denote by $\mathbb P_{k}(K; \mathbb{X})$ the vectorial or tensorial version space of $\mathbb P_{k}(K)$.
Let $Q_k^{K}$ ($\bs Q_k^{K}$) be the $L^2$-orthogonal projector onto $\mathbb P_k(K)$ ($\mathbb P_{k}(K; \mathbb{X})$).

Let $\{\mathcal {T}_h\}$ be a family of partitions
of $\Omega$ into nonoverlapping simple polytopal elements with $h:=\max\limits_{K\in \mathcal {T}_h}h_K$
and $h_K:=\mbox{diam}(K)$.
Let $\mathcal{F}_h^r$ be the set of all $(d-r)$-dimensional faces
of the partition $\mathcal {T}_h$ for $r=1, 2$.
Moreover, we set for each $K\in\mathcal{T}_h$
\[
\mathcal{F}(K):=\{F\in\mathcal{F}_h^1: F\subset\partial K\}.%, \quad \mathcal{E}_h^i(K):=\{e\in\mathcal{E}_h^i: e\subset\partial K\},
\]
Similarly, for $F\in\mathcal{F}_h^1$, we define
\[
\mathcal E(F):=\{e\in\mathcal{F}_h^{2}: e\subset\overline{F}\}.
\]
For any $F\in\mathcal{F}_h^1$,
denote by $h_F$ its diameter and fix a unit normal vector $\boldsymbol{n}_F$.
For any $F\subset\partial K$,
denote by $\bs n_{K, F}$ the
unit outward normal to $\partial K$. Without causing any confusion, we will abbreviate $\bs n_{K, F}$ as $\bs n$ for simplicity.

For non-negative integer $k$, let
\[
\mathbb P_k(\mathcal T_h):=\{v\in L^2(\Omega): v|_K\in \mathbb P_k(K)\textrm{ for each } K\in\mathcal T_h\}.
\]
%For positive integer $m$, define
Define
\[
\boldsymbol{H}^1(\mathcal T_h; \mathbb{R}^d)
:=\{\bs v\in \bs L^2(\Omega; \mathbb{R}^d): \bs v|_K\in \boldsymbol{H}^1(K; \mathbb{R}^d)\textrm{ for each } K\in\mathcal T_h\},
\]
and the usual broken $H^1$-type norm and semi-norm
\[
\|\bs v\|_{1,h}:=\Big (\sum_{K\in\mathcal T_h}\|\bs v\|_{1,K}^2\Big )^{1/2},\quad |\bs v|_{1,h}:=\Big (\sum_{K\in\mathcal T_h}|\bs v|_{1,K}^2\Big )^{1/2}.
\]
%
%For non-negative integers $m$ and $k$, let
%\[
%H^m(\mathcal T_h):=\{v\in L^2(\Omega): v|_K\in H^m(K)\textrm{ for each } K\in\mathcal T_h\},
%\]
%\[
%\mathbb P_k(\mathcal T_h):=\{v\in L^2(\Omega): v|_K\in \mathbb P_k(K)\textrm{ for each } K\in\mathcal T_h\}.
%\]
%For integer $m>0$ and piecewisely smooth function $v$, define the usual broken $H^m$-type norm and semi-norm
%\[
%\|v\|_{m,h}:=\Big (\sum_{K\in\mathcal T_h}\|v\|_{m,K}^2\Big )^{1/2},\quad |v|_{m,h}:=\Big (\sum_{K\in\mathcal T_h}|v|_{m,K}^2\Big )^{1/2}.
%\]
Let $\boldsymbol{\varepsilon}_h$ and $\div_h$ be the piecewise counterparts of $\boldsymbol{\varepsilon}$ and $\div$ with respect to $\mathcal T_h$.

We introduce jumps on ($d-1$)-dimensional faces.
Consider two adjacent elements $K^+$ and $K^-$ sharing an interior ($d-1$)-dimensional face $F$.
Denote by $\bs n^+$ and $\bs n^-$ the unit outward normals
to the common face $F$ of the elements $K^+$ and $K^-$, respectively.
For a scalar-valued or tensor-valued function $v$, write $v^+:=v|_{K^+}$ and $v^-
:=v|_{K^-}$.   Then define the jump on $F$ as
follows:
\[
\llbracket v\rrbracket:=v^+\bs n_{F}\cdot \bs n^++v^-\bs n_{F}\cdot \bs n^-.
\]
On a face $F$ lying on the boundary $\partial\Omega$, the above term is
defined by $\llbracket v\rrbracket
   :=v\bs n_{F}\cdot\bs n.$

Denote the space of rigid motions by
\[
\bs{RM}:=\{\bs c+\bs A\bs x: \;\bs c\in\mathbb R^d,\; \bs A\in \mathbb K\},
\]
where $\bs x:=(x_1, \cdots, x_d)^{\intercal}$.
For any $\bs v:=(v_1,\cdots, v_d)^{\intercal}\in\bs H^1(K;\mathbb R^d)$,
$\bs\curl\,\bs v\in \bs L^2(K;\mathbb K)$ is defined by
\[
(\bs\curl\,\bs v)_{ij}:=\frac{\partial v_i}{\partial x_j}-\frac{\partial v_j}{\partial x_i}\quad\textrm{ for } i,j=1,\cdots,d.
\]
%For function spaces $B\subset A\subset L^2(K)$, define $A\backslash B$ as the $L^2$-orthogonal complement space of $B$ in $A$.
For positive integer $k$, set $\mathbb G_{k-2}(K):=\nabla\mathbb P_{k-1}(K)$. Take $\mathbb G_{k-2}^{\oplus}(K)$ being any subspace of $\mathbb P_{k-2}(K;\mathbb R^d)$ such that
\begin{equation}\label{eq:Pkdirectsum}
\mathbb P_{k-2}(K; \mathbb R^d)=\mathbb G_{k-2}^{\oplus}(K)\oplus \mathbb G_{k-2}(K),
\end{equation}
where $\oplus$ is the direct sum.
One choice of $\mathbb G_{k-2}^{\oplus}(K)$ is given by (3.11) in \cite{Arnold2018,ArnoldFalkWinther2006}
\begin{equation}\label{eq:Gk-2perpchoice1}
\mathbb G_{k-2}^{\oplus}(K)=\begin{cases}
\bs x^{\perp}\mathbb P_{k-3}(K), & \textrm{ for }  d=2,\\
\bs x\wedge\mathbb P_{k-3}(K;\mathbb R^3), & \textrm{ for } d=3,
\end{cases}
\end{equation}
where $\bs x^{\perp}:=\begin{pmatrix}
x_2\\ -x_1
\end{pmatrix}$ and $\wedge$ is the exterior product.
% We can also take
% \begin{equation}\label{eq:Gk-2perpchoice2}
% \mathbb G_{k-2}^{\oplus}(K)=\begin{cases}
% (\bs x-\bs x_K)^{\perp}\mathbb P_{k-3}(K), & \textrm{ for }  d=2,\\
% (\bs x-\bs x_K)\wedge\mathbb P_{k-3}(K;\mathbb R^3), & \textrm{ for } d=3,
% \end{cases}
% \end{equation}
% where $\bs x_K$ is the centroid of $K$.
Let $\bs Q_{\mathbb G_{k-2}^{\oplus}}^K$ be the $L^2$-orthogonal projector onto $\mathbb G_{k-2}^{\oplus}(K)$.

\subsection{Mesh conditions and some inequalities}
We impose the following conditions on the mesh $\mathcal T_h$ in this paper:
\begin{itemize}
 \item[(A1)] Each element $K\in \mathcal T_h$ is star-shaped with respect to a ball $B_K\subset K$  with radius $h_K/\gamma_K$, where the chunkiness parameter $\gamma_K$ is uniformly bounded;
 \item[(A2)] There exists a shape regular simplicial mesh $\mathcal T_h^*$ such that 
 \begin{itemize}
 \item[$-$] each $K\in \mathcal T_h$ is a union of some simplexes in $\mathcal T_h^*$;
 \item[$-$] for each $K\in \mathcal T_h$, $\mathcal T_K:=\{K'\in\mathcal T_h^*: K'\subset K\}$ is a quasi-uniform partition of $K$, and the mesh size of $\mathcal T_K$ is proportional to $h_K$. 
 \end{itemize}
\end{itemize}
Throughout this paper, we use
``$\lesssim\cdots $" to mean that ``$\leq C\cdots$", where
$C$ is a generic positive constant independent of the mesh size $h$ and the viscosity $\nu$, but may depend on the chunkiness parameter of the polytope, the degree of polynomials $k$, the dimension of space $d$, and the shape regularity and quasi-uniform constants of the virtual triangulation $\mathcal T^*_h$,
which may take different values at different appearances. And $A\eqsim B$ means $A\lesssim B$ and $B\lesssim A$.
%Hereafter, we always assume $k\geq m$.

%Note that (A1) and (A2) imply ${\rm diam}(F) \eqsim {\rm diam} (K)$ for all $F\in \mathcal F^r(K), 1\leq r\leq n-1$.
%For a star-shaped domain $D$,

Under the mesh condition (A1),
we have the trace inequality of $H^1(K)$ \cite[(2.18)]{BrennerSung2018}
\begin{equation}\label{L2trace}
\|v\|_{0,\partial K}^2 \lesssim h_K^{-1}\|v\|_{0,K}^2 + h_K |v|_{1,K}^2 \quad \forall~v\in H^1(K),
\end{equation}
the Poincar\'e-Friedrichs inequality \cite[(2.15)]{BrennerSung2018}
%\begin{equation}\label{eq:Poincare-Friedrichs1}
%\|v\|_{0,K}\lesssim h_K|v|_{1,K} + h_K^{d/2}\left|Q_0^Kv\right|\quad \forall~v\in H^1(K),
%\end{equation}
\begin{equation}\label{eq:Poincare-Friedrichs2}
\|v\|_{0,K}\lesssim h_K|v|_{1,K} + h_K^{1-d/2}\left|\int_{\partial K}v\dd s\right|\quad \forall~v\in H^1(K),
\end{equation}
and
the Korn's second inequality \cite{Duran2012}
\begin{equation}\label{eq:Korn}
|\bs v|_{1,K}\lesssim \|\bs\varepsilon(\bs v)\|_{0,K}\quad\forall~\bs v\in \bs H^1(K;\mathbb R^d) \textrm{ satisfying } \bs Q_0^{K}(\bs\curl\,\bs v)=\bs0.
\end{equation}
Recall the
Babu\v{s}ka-Aziz inequality \cite{BernardiCostabelDaugeGirault2016}: for any $q\in L^2(K)$, there exists $\bs v\in \bs H^1(K;\mathbb R^d)$ such that
\begin{equation}\label{eq:babuska-aziz}
\div\bs v=q,\quad h_K^{-1}\|\bs v\|_{0, K}+|\bs v|_{1, K}\lesssim \|q\|_{0,K}.
\end{equation}
When $q\in L_0^2(K)$, we can choose $\bs v\in \bs H_0^1(K;\mathbb R^d)$.
% Combined with the proof of Proposition 9.1.1 in \cite{BoffiBrezziFortin2013}, it holds for any $\bs\tau\in \bs L^2(K;\mathbb M)$ satisfying $Q_0^K(\tr\bs\tau)=0$ that \cite[Lemma~3.4]{ChenHuHuang2018}
For any $\bs\tau\in \bs L^2(K;\mathbb M)$ satisfying $Q_0^K(\tr\bs\tau)=0$, it holds (cf. \cite[Lemma~3.4]{ChenHuHuang2018})
\begin{equation}\label{eq:tensorineqlty}
\|\bs\tau\|_{0,K}\lesssim \|\dev\bs\tau\|_{0,K}+\|\div\bs\tau\|_{-1,K}.
\end{equation}

Let $K_s\subset\mathbb R^n$ be the regular inscribed simplex of $B_K$, where all the edges of $K_s$ have the same length.
It holds for any nonnegative integers $\ell$ and $i$ that \cite[Lemma~4.3 and Lemma~4.4]{Huang2020}
\begin{equation}\label{eqn:polynomialequiv}
\|q\|_{0, K}\eqsim \|q\|_{0, K_s}\quad\forall~q\in \mathbb P_{\ell}(K),
\end{equation}
\begin{equation}\label{eq:polyinverse}
 \| q \|_{0,K} \lesssim h_K^{-i}\| q \|_{-i,K} \quad \forall~q\in \mathbb P_{\ell}(K).
\end{equation}

\begin{lemma}
%Let $K\subset\mathbb R^d$ be a star-shaped domain.
For any nonnegative integers $\ell$, $i$ and $j$, we have
\begin{equation}\label{eq:polyinverseEqual}
 h_K^{-j}\| q \|_{-j,K} \eqsim h_K^{-i}\| q \|_{-i,K} \quad \forall~q\in \mathbb P_{\ell}(K).
\end{equation}
\end{lemma}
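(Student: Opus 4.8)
The plan is to establish the stronger fact that, for \emph{every} nonnegative integer $i$,
\[
h_K^{-i}\,\|q\|_{-i,K}\eqsim\|q\|_{0,K}\qquad\forall~q\in\mathbb P_{\ell}(K),
\]
since \eqref{eq:polyinverseEqual} then follows at once by chaining this equivalence for the indices $i$ and $j$. For $i=0$ there is nothing to prove, and one of the two inequalities, namely $\|q\|_{0,K}\lesssim h_K^{-i}\|q\|_{-i,K}$, is precisely the polynomial inverse estimate \eqref{eq:polyinverse}; this is the only place where the polynomial nature of $q$ enters. Hence the whole task reduces to proving the opposite bound $\|q\|_{-i,K}\lesssim h_K^{i}\|q\|_{0,K}$, which in fact holds for an arbitrary $q\in L^2(K)$.

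For that, I would unwind the definition $\|q\|_{-i,K}=\sup_{0\neq\phi\in H_0^i(K)}(q,\phi)_K/\|\phi\|_{i,K}$ and bound $\|\phi\|_{0,K}$ for $\phi\in H_0^i(K)$. Since $\phi\in H_0^i(K)$, each of its partial derivatives of order at most $i-1$ belongs to $H_0^1(K)$, so the Poincar\'e--Friedrichs inequality \eqref{eq:Poincare-Friedrichs2} (whose boundary term vanishes on $H_0^1(K)$) may be applied repeatedly to $\phi$ and to its derivatives, yielding
\[
\|\phi\|_{0,K}\lesssim h_K|\phi|_{1,K}\lesssim h_K^{2}|\phi|_{2,K}\lesssim\cdots\lesssim h_K^{i}|\phi|_{i,K}\le h_K^{i}\|\phi\|_{i,K}.
\]
Combining this with the Cauchy--Schwarz inequality $(q,\phi)_K\le\|q\|_{0,K}\|\phi\|_{0,K}$ gives $(q,\phi)_K\lesssim h_K^{i}\|q\|_{0,K}\|\phi\|_{i,K}$ for every admissible $\phi$, and taking the supremum over $\phi$ produces the desired bound $\|q\|_{-i,K}\lesssim h_K^{i}\|q\|_{0,K}$.

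The only point that requires some care is keeping track of the powers of $h_K$ in the iterated Poincar\'e step: under the star-shapedness assumption (A1) the Poincar\'e--Friedrichs constant in \eqref{eq:Poincare-Friedrichs2} is uniform, so each of the $i$ applications contributes exactly one factor of $h_K$, the accumulated constant depending only on $i$, $d$ and the chunkiness parameter of $K$. With both one-sided estimates in hand, the equivalence $h_K^{-i}\|q\|_{-i,K}\eqsim\|q\|_{0,K}$ holds, and applying it to $i$ and to $j$ and combining the two gives \eqref{eq:polyinverseEqual}.
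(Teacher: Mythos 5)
Your proof is correct and follows essentially the same route as the paper: reduce to the equivalence $h_K^{-i}\|q\|_{-i,K}\eqsim\|q\|_{0,K}$, obtain one direction from the inverse estimate \eqref{eq:polyinverse}, and the other by applying the Poincar\'e--Friedrichs inequality \eqref{eq:Poincare-Friedrichs2} recursively to the test function in the definition of the negative norm. The only cosmetic difference is that the paper's negative norm uses the seminorm $|v|_{i,K}$ in the denominator, which is equivalent to $\|v\|_{i,K}$ on $H_0^i(K)$, so nothing changes.
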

\begin{proof}
It is sufficient to prove
\begin{equation}\label{eq:201908027-1}
\| q \|_{0,K} \eqsim h_K^{-i}\| q \|_{-i,K} \quad \forall~q\in \mathbb P_{\ell}(K)
\end{equation}
with $i\geq 1$.
Applying the Poincar\'e-Friedrichs inequality~\eqref{eq:Poincare-Friedrichs2} recursively, we get for any $v\in H_0^i(K)$ that
\[
(q, v)_K\leq\|q\|_{0,K}\|v\|_{0,K}\lesssim h_K\|q\|_{0,K}|v|_{1,K}\lesssim \cdots\lesssim h_K^i\|q\|_{0,K}|v|_{i,K}.
\]
Then it follows
\[
\| q \|_{-i,K}=\sup_{v\in H_0^i(K)}\frac{(q, v)_K}{|v|_{i,K}}\lesssim h_K^i\|q\|_{0,K},
\]
which together with \eqref{eq:polyinverse} yields \eqref{eq:201908027-1}.
\end{proof}

Recall the error estimates of the $L^2$ projection.
For each $F\in\mathcal{F}(K)$ and nonnegative integer $\ell$, we have  %$F\in\mathcal F_h^{1}$ with $F\subset\partial K$, we have
\begin{align}
\label{eq:QKerror}
\|v-Q_{\ell}^Kv\|_{0,K}&\lesssim h_K^{\ell+1}|v|_{\ell+1, K}\quad\forall~v\in H^{\ell+1}(K),\\
\label{eq:QFerror}
\|v-Q_{\ell}^Fv\|_{0,F}&\lesssim h_K^{\ell+1/2}|v|_{\ell+1, K}\quad\forall~v\in H^{\ell+1}(K).
\end{align}

\begin{lemma}
We have for any $q\in \mathbb P_{k-1}(K)$ that
\begin{equation}\label{eq:infsupPolynomial}
\| q \|_{0,K}\lesssim \sup_{\bs w\in\mathbb P_k(K;\mathbb R^d)}\frac{(\div\bs w, q)_K}{h_{K}^{-1}\|\bs w\|_{0, K}+|\bs w|_{1, K}}.
\end{equation}
\end{lemma}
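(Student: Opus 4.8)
The plan is to establish the discrete inf-sup stability \eqref{eq:infsupPolynomial} by transferring to the reference-type configuration provided by the inscribed simplex and then invoking the classical Babu\v{s}ka--Aziz inequality there. First I would fix $q\in\mathbb P_{k-1}(K)$ and recall the continuous inequality \eqref{eq:babuska-aziz}: there exists $\bs v\in\bs H^1(K;\mathbb R^d)$ with $\div\bs v=q$ and $h_K^{-1}\|\bs v\|_{0,K}+|\bs v|_{1,K}\lesssim\|q\|_{0,K}$. The difficulty is that this $\bs v$ is not a polynomial, so it cannot be used directly as a test function in the supremum on the right-hand side of \eqref{eq:infsupPolynomial}, which ranges only over $\mathbb P_k(K;\mathbb R^d)$. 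The key step is therefore to produce a \emph{polynomial} competitor of comparable norm realizing $(\div\bs w,q)_K\eqsim\|q\|_{0,K}^2$.

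The cleanest route is to work on the regular inscribed simplex $K_s\subset B_K\subset K$. On $K_s$, which is a fixed shape (up to scaling by $h_K$) since the chunkiness parameter $\gamma_K$ is uniformly bounded, the finite-dimensional spaces $\mathbb P_{k-1}(K_s)$ and $\mathbb P_k(K_s;\mathbb R^d)$ satisfy a polynomial inf-sup condition by a standard scaling and compactness argument (the divergence maps $\mathbb P_k(K_s;\mathbb R^d)$ onto $\mathbb P_{k-1}(K_s)$, and on a fixed reference simplex the quotient norm argument gives a constant depending only on $k$, $d$, and the simplex shape). Concretely, for $q\in\mathbb P_{k-1}(K)$, restrict to $K_s$; by the polynomial norm equivalence \eqref{eqn:polynomialequiv} we have $\|q\|_{0,K}\eqsim\|q\|_{0,K_s}$. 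On $K_s$ pick $\bs w_s\in\mathbb P_k(K_s;\mathbb R^d)$ with $\div\bs w_s=q|_{K_s}$ and $h_K^{-1}\|\bs w_s\|_{0,K_s}+|\bs w_s|_{1,K_s}\lesssim\|q\|_{0,K_s}$ (the $h_K$-powers tracked by a dilation from a unit-size simplex). Then extend $\bs w_s$ to the unique polynomial $\bs w\in\mathbb P_k(K;\mathbb R^d)$ with the same coefficients; again by \eqref{eqn:polynomialequiv} applied componentwise to $\bs w$ and to $\nabla\bs w$ (and to $\div\bs w$), we get $h_K^{-1}\|\bs w\|_{0,K}+|\bs w|_{1,K}\lesssim h_K^{-1}\|\bs w_s\|_{0,K_s}+|\bs w_s|_{1,K_s}\lesssim\|q\|_{0,K}$, and since $\div\bs w$ is the polynomial extension of $\div\bs w_s=q|_{K_s}$, which equals $q$ on $K$, we also have $\div\bs w=q$ on all of $K$. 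Hence $(\div\bs w,q)_K=\|q\|_{0,K}^2$, and dividing by $h_K^{-1}\|\bs w\|_{0,K}+|\bs w|_{1,K}\lesssim\|q\|_{0,K}$ yields \eqref{eq:infsupPolynomial}.

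The main obstacle is making the scaling bookkeeping precise: one must confirm that the polynomial inf-sup constant on the inscribed simplex is genuinely independent of $h_K$ (only a dilation is involved, so this is clean) and of the particular element $K$ (this is where uniform boundedness of $\gamma_K$ enters, since the \emph{size} of $B_K$, hence of $K_s$, relative to $h_K$ is controlled). An alternative, avoiding the reference-simplex detour, is to argue directly by contradiction on the quotient space $\mathbb P_k(K;\mathbb R^d)/\ker(\div)$ after rescaling $K$ to unit diameter; but that would require a compactness argument over the family of admissible $K$'s, which is less transparent than exploiting \eqref{eqn:polynomialequiv} to reduce to the single fixed simplex $K_s$. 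I would present the $K_s$-based argument.
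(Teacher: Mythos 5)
Your proposal is correct and shares the paper's overall skeleton: restrict $q$ to the regular inscribed simplex $K_s$, produce a polynomial $\bs w_s\in\mathbb P_k(K_s;\mathbb R^d)$ with $\div\bs w_s=q|_{K_s}$ and controlled norms, extend it as a polynomial to all of $K$, note that $\div\bs w-q$ is a polynomial vanishing on $K_s$ and hence on $K$, and transfer the norm bounds back via \eqref{eqn:polynomialequiv}. Where you genuinely differ is the mechanism for producing $\bs w_s$. The paper first invokes the Babu\v{s}ka--Aziz inequality \eqref{eq:babuska-aziz} on $K_s$ to get a non-polynomial field $\bs v$ with $\div\bs v=q|_{K_s}$, then applies the Brezzi--Douglas--Marini interpolation $\bs I_{K_s}^{\rm BDM}$, whose commuting property $\div(\bs I_{K_s}^{\rm BDM}\bs v)=Q_{k-1}^{K_s}\div\bs v=q|_{K_s}$ together with its approximation estimate and the inverse inequality \eqref{eq:polyinverse} yields the polynomial competitor with the stated bounds. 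You instead bypass the interpolation operator: since $K_s$ is, up to dilation and rigid motion, a single fixed reference simplex, the surjectivity of $\div:\mathbb P_k\to\mathbb P_{k-1}$ plus a finite-dimensional quotient-norm argument gives a bounded right inverse with a constant depending only on $k$ and $d$, and a dilation tracks the $h_K$-powers (your bookkeeping here is correct, and you rightly note that the uniform chunkiness bound is what makes $h_{K_s}\eqsim h_K$). Both routes are valid. Yours is more elementary and self-contained, at the cost of asserting surjectivity of the polynomial divergence --- easy, e.g.\ via $\div(\bs x\,p_j)=(d+j)p_j$ for $p_j$ homogeneous of degree $j$, but worth a line --- and of a less explicit finite-dimensionality constant; the paper's route is constructive and leans on an off-the-shelf operator whose properties are already cited.
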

\begin{proof}
%By the right inverse of the divergence operator and the Babu\v{s}ka-Aziz inequality~\eqref{eq:babuska-aziz},
Due to \eqref{eq:babuska-aziz},
there exists $\bs v\in \bs H^1(K_s;\mathbb R^d)$
such that
\[
\div\bs v=q|_{K_s}\quad h_{K_s}^{-1}\|\bs v\|_{0, K_s}+|\bs v|_{1, K_s}\lesssim \|q\|_{0,K_s}.
\]
Let $\bs I_{K_s}^{\rm BDM}: \bs H^1(K_s;\mathbb R^d)\to \mathbb P_k(K_s;\mathbb R^d)$ be the Brezzi-Douglas-Marini interpolation \cite{BoffiBrezziFortin2013,ArnoldFalkWinther2006},
then
% \[
% \bs I_{K_s}^{\rm BDM}\bs v=\bs v\quad\forall~\bs v\in \mathbb P_k(K_s;\mathbb R^d),
% \]
\[
\div(\bs I_{K_s}^{\rm BDM}\bs v)=Q_{k-1}^{K_s}\div\bs v=q|_{K_s},
\]
\[
\|\bs v-\bs I_{K_s}^{\rm BDM}\bs v\|_{0,K_s}\lesssim h_{K_s}|\bs v|_{1,K_s}\lesssim h_{K_s}\|q\|_{0,K_s}.
\]
It follows from the inverse inequality \eqref{eq:polyinverse} and \eqref{eq:QKerror} that
%and \eqref{eq:QKerror}
\begin{align*}
|\bs I_{K_s}^{\rm BDM}\bs v|_{1, K_s}&=|\bs I_{K_s}^{\rm BDM}\bs v - \bs Q_0^{K_s}\bs v|_{1, K_s}\lesssim h_{K_s}^{-1}\|\bs I_{K_s}^{\rm BDM}\bs v - \bs Q_0^{K_s}\bs v\|_{0, K_s} \\
&\lesssim h_{K_s}^{-1}\|\bs v-\bs I_{K_s}^{\rm BDM}\bs v\|_{0, K_s} + h_{K_s}^{-1}\|\bs v - \bs Q_0^{K_s}\bs v\|_{0, K_s} \\
&\lesssim |\bs v|_{1,K_s}\lesssim \|q\|_{0,K_s}.
\end{align*}
Noting that $\bs I_{K_s}^{\rm BDM}\bs v\in\mathbb P_k(K_s;\mathbb R^d)$ can be spontaneously extended to the domain $K$, let $\bs w\in \mathbb P_k(K;\mathbb R^d)$ such that $\bs w|_{K_s}=\bs I_{K_s}^{\rm BDM}\bs v$. Thus
\[
(\div\bs w - q)|_{K_s}=0, \quad h_{K_s}^{-1}\|\bs w\|_{0, K_s}+|\bs w|_{1, K_s}\lesssim \|q\|_{0,K_s}.
\]
Again due to $\div\bs w - q$ being a polynomial,
$
(\div\bs w - q)|_{K_s}=0$  implies  $\div\bs w = q$ on $K$.
And it follows from \eqref{eqn:polynomialequiv} that
\[
h_{K}^{-1}\|\bs w\|_{0, K}+|\bs w|_{1, K}\lesssim h_{K_s}^{-1}\|\bs w\|_{0, K_s}+|\bs w|_{1, K_s}\lesssim \|q\|_{0,K_s}\leq \|q\|_{0,K}.
\]
Therefore we arrive at \eqref{eq:infsupPolynomial}.
\end{proof}

\section{Divergence-Free Nonconforming Virtual Elements}

We will construct the divergence-free nonconforming virtual elements for Stokes problem in this section.

\subsection{Virtual elements}
For any $K\in\mathcal T_h$, $\bs u, \bs v\in \bs H^1(K;\mathbb R^d)$ and $p\in L^2(K)$
satisfying $\div\boldsymbol{\varepsilon}(\bs u)+\nabla p\in \bs L^2(K;\mathbb R^d)$, and
$(\boldsymbol{\varepsilon}(\bs u)\bs n+p\bs n)|_F\in\bs L^2(F;\mathbb R^d)$ for each $F\in\mathcal F(K)$,
it follows from the integration by parts that
\begin{equation}\label{eq:StokesGreen1}
(\boldsymbol{\varepsilon}(\bs u), \boldsymbol{\varepsilon}(\bs v))_K + (\div \bs v, p)_K 
=-(\div\boldsymbol{\varepsilon}(\bs u)+\nabla p, \bs v)_K+
(\boldsymbol{\varepsilon}(\bs u)\bs n+p\bs n, \bs v)_{\partial K}. 
\end{equation}
%And for any $\bs u, \bs v\in \bs H^1(K;\mathbb R^d)$
%satisfying $\div\bs u\in H^1(K)$, applying the integration by parts yields
%\begin{equation}\label{eq:StokesGreen2}
%(\div \bs u, \div \bs v)_K=-(\nabla\div \bs u, \bs v)_K + \sum_{F\in \mathcal F^{1}(K)}(\div \bs u, \bs v\cdot\bs n)_F.
%\end{equation}
%\begin{align}
%&(\boldsymbol{\varepsilon}(\bs u), \boldsymbol{\varepsilon}(\bs v))_K + (\div \bs v, p)_K + (\div \bs u, q)_K \notag\\
%=&-(\div\boldsymbol{\varepsilon}(\bs u)+\nabla p, \bs v)_K +
%\sum_{F\in \mathcal F^{1}(K)}(\boldsymbol{\varepsilon}(\bs u)\bs n_{K, F}+p\bs n_{K, F}, \bs v)_F + (\div \bs u, q)_K. \label{eq:StokesGreen}
%\end{align}
% Following the ideas in \cite{ChenHuang2020,Huang2020}, suppose $\bs u\in \mathbb P_k(K;\mathbb R^d)$ and $p\in \mathbb P_{k-1}(K)$ temporarily. 
Inspired by the Green's identity~\eqref{eq:StokesGreen1},
we propose the following local degrees of freedom of the divergence-free nonconforming virtual elements for Stokes problem
\begin{align}
(\bs v, \bs q)_F & \quad\forall~\bs q\in\mathbb P_{k-1}(F; \mathbb R^d) \textrm{ on each }  F\in\mathcal F(K), \label{dof1}\\
(\bs v, \bs q)_K & \quad\forall~\bs q\in\mathbb P_{k-2}(K; \mathbb R^d)=\mathbb G_{k-2}^{\oplus}(K)\oplus \mathbb G_{k-2}(K). \label{dof2}
\end{align}
Denote by $\mathcal N_k(K)$ all the degrees of freedom \eqref{dof1}-\eqref{dof2}.
And define the space of shape functions as
\begin{align*}
\bs V_k(K):=\{\bs v\in \bs H^1(K;\mathbb R^d): &\div\bs v\in\mathbb P_{k-1}(K), \textrm{ there exists some } \\
& s\in L^2(K) \textrm{ such that } \div\boldsymbol{\varepsilon}(\bs v)+\nabla s\in \mathbb G_{k-2}^{\oplus}(K),  \\
&\textrm{ and } (\boldsymbol{\varepsilon}(\bs v)\bs n+s\bs n)|_F\in\mathbb P_{k-1}(F;\mathbb R^d) \;\forall~F\in\mathcal F(K)\}.
\end{align*}
%\[
%\bs V_k(K):=\{\bs v\in \bs W_k(K): (\bs v, \bs q)_K=(\bs\Pi^K\bs v, \bs q)_K\quad\forall~\bs q\in \mathbb P_k(K; \mathbb R^d)/\mathbb P_{k-2}(K; \mathbb R^d)\}.
%\]
%By the definition of $\bs V_k(K)$, we have $\div\bs v\in\mathbb P_{k-1}(K)$ for any $\bs v\in\bs V_k(K)$.
By the direct sum decomposition \eqref{eq:Pkdirectsum}, clearly we have $\mathbb P_{k}(K;\mathbb R^d)\subseteq\bs V_k(K)$.

% For any $\bs v\in\mathbb P_{k}(K;\mathbb R^d)$, since $\div\boldsymbol{\varepsilon}(\bs v)\in\mathbb P_{k-2}(K; \mathbb R^d)$, by the direct sum decomposition \eqref{eq:Pkdirectsum} there exists $s\in\mathbb P_{k-1}(K)$ such that $\div\boldsymbol{\varepsilon}(\bs v)+\nabla s\in \mathbb G_{k-2}^{\oplus}(K)$. And then $(\boldsymbol{\varepsilon}(\bs v)\bs n+s\bs n)|_F\in\mathbb P_{k-1}(F;\mathbb R^d)$ for each $F\in\mathcal F^{1}(K)$, thus
% \[
% \mathbb P_{k}(K;\mathbb R^d)\subseteq\bs V_k(K).
% \]

\begin{lemma}\label{lem:VKdim}
The dimension of $\bs V_k(K)$ is same as the number of the degrees of freedom~\eqref{dof1}-\eqref{dof2}.
\end{lemma}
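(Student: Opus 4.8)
The plan is to show that the evaluation map sending $\bs v \in \bs V_k(K)$ to its list of degrees of freedom in $\mathcal N_k(K)$ is injective; since by construction the number of functionals in $\mathcal N_k(K)$ equals $\dim \mathbb P_{k-1}(F;\mathbb R^d)$ summed over $F \in \mathcal F(K)$ plus $\dim \mathbb P_{k-2}(K;\mathbb R^d)$, and since $\dim \bs V_k(K)$ is a priori at least this number (one checks the defining constraints cut out a space of at least that dimension — or, more cleanly, one only needs the two inequalities $\dim \bs V_k(K) \le \#\mathcal N_k(K)$ from injectivity together with the reverse inequality which is easy), injectivity forces equality. So the crux is: \emph{if $\bs v \in \bs V_k(K)$ and all functionals in \eqref{dof1}--\eqref{dof2} vanish on $\bs v$, then $\bs v = \bs 0$.}

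First I would fix such a $\bs v$, with associated $s \in L^2(K)$ (normalized, say, by $\int_K s\dd x = 0$ to pin it down, noting $s$ is determined by $\bs v$ up to a constant since $\nabla s$ is determined), so that $\div\boldsymbol\varepsilon(\bs v) + \nabla s =: \bs g \in \mathbb G_{k-2}^{\oplus}(K)$ and $(\boldsymbol\varepsilon(\bs v)\bs n + s\bs n)|_F =: \bs q_F \in \mathbb P_{k-1}(F;\mathbb R^d)$. Apply the Green's identity \eqref{eq:StokesGreen1} with this $\bs u = \bs v$, $p = s$, and test function $\bs v$ itself:
\begin{equation*}
\|\boldsymbol\varepsilon(\bs v)\|_{0,K}^2 + (\div\bs v, s)_K = -(\bs g, \bs v)_K + \sum_{F\in\mathcal F(K)}(\bs q_F, \bs v)_F.
\end{equation*}
The boundary terms vanish because $\bs q_F \in \mathbb P_{k-1}(F;\mathbb R^d)$ and the face moments \eqref{dof1} of $\bs v$ are zero. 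For the volume term $(\bs g,\bs v)_K$: since $\bs g \in \mathbb G_{k-2}^{\oplus}(K) \subset \mathbb P_{k-2}(K;\mathbb R^d)$, it is killed by the interior moments \eqref{dof2}. So the right-hand side is $0$. Next I must deal with $(\div\bs v, s)_K$: here $\div\bs v \in \mathbb P_{k-1}(K)$, and integration by parts gives $(\div\bs v, s)_K = -(\bs v, \nabla s)_K + (\bs v\cdot\bs n, s)_{\partial K}$; but $\nabla s \in \mathbb G_{k-2}(K) \subset \mathbb P_{k-2}(K;\mathbb R^d)$ is again killed by \eqref{dof2} — this is exactly why the interior DoFs include the $\mathbb G_{k-2}$ part — and the boundary piece... here I would instead argue that $(\div\bs v, s)_K$ can be handled directly: actually the cleanest route is to write $(\div\bs v, s)_K = (\div\bs v,\, Q_{k-1}^K s - \text{something})$; but since $\div\bs v\in\mathbb P_{k-1}$ we only see $Q_{k-1}^K s$, and $s$ need not be polynomial. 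The honest fix: use that $\div\bs v \in \mathbb P_{k-1}(K)$ and show separately $\div\bs v = 0$ first. Indeed, for any $q \in \mathbb P_{k-1}(K)$, write $q = q_0 + \bar q$ with $\bar q = $ mean; for the constant part, $(\div\bs v, 1)_K = (\bs v\cdot\bs n, 1)_{\partial K} = \sum_F (\bs v, \bs n_{K,F})_F = 0$ by \eqref{dof1}; for $q_0 \in L^2_0(K)$, by \eqref{eq:babuska-aziz} pick $\bs w \in \bs H_0^1(K;\mathbb R^d)$ with $\div\bs w = q_0$, then $(\div\bs v, q_0)_K = -(\nabla\div\bs v\cdot,\ldots)$ — no; rather integrate $(\div\bs v, q_0)_K$... this needs $\bs v$ info on $\partial K$ that we have. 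Let me restate: the right approach for $\div \bs v=0$ is to note $\div\bs v \in \mathbb P_{k-1}(K) \subset \mathbb P_{k-1}$ and test \eqref{eq:StokesGreen1} cleverly, or simply observe that once the energy identity gives $\|\boldsymbol\varepsilon(\bs v)\|_{0,K}^2 + (\div\bs v,s)_K = 0$ we still need more. The \textbf{main obstacle}, and where I would spend the real effort, is precisely controlling the term $(\div\bs v, s)_K$ (equivalently, showing $\div\bs v = 0$) since $s$ is not polynomial; the resolution is that $\nabla s \in \mathbb P_{k-2}(K;\mathbb R^d)$ forces $s \in \mathbb P_{k-1}(K)$ (a gradient being polynomial of degree $k-2$ means $s$ itself is a polynomial of degree $k-1$, up to the additive constant we already normalized away), so in fact $s \in \mathbb P_{k-1}(K)$, and then $(\div\bs v, s)_K = 0$ follows by integration by parts: $(\div\bs v, s)_K = -(\bs v,\nabla s)_K + \sum_F(\bs v, s\,\bs n)_F = 0$, using $\nabla s \in \mathbb P_{k-2}(K;\mathbb R^d)$ against \eqref{dof2} and $s|_F \in \mathbb P_{k-1}(F)$ against \eqref{dof1}.

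With that settled, the energy identity collapses to $\|\boldsymbol\varepsilon(\bs v)\|_{0,K}^2 = 0$, hence $\boldsymbol\varepsilon(\bs v) = \bs 0$, so $\bs v \in \bs{RM}$ is a rigid motion, $\bs v = \bs c + \bs A\bs x$ with $\bs A \in \mathbb K$. Finally I would use the surviving degrees of freedom to kill $\bs v$: the zeroth-order face moments $(\bs v,\bs q)_F = 0$ for $\bs q \in \mathbb P_0(F;\mathbb R^d) \subset \mathbb P_{k-1}(F;\mathbb R^d)$ on all faces $F \in \mathcal F(K)$ — equivalently $\int_F \bs v\dd s = \bs 0$ for every face — together with the star-shapedness of $K$ (so it has at least $d+1$ faces in general position) pin down the affine map $\bs v$: averaging $\bs v = \bs c + \bs A\bs x$ over each face gives $\bs c + \bs A\bar{\bs x}_F = \bs 0$ at the $d$-dimensional face centroids $\bar{\bs x}_F$, and since these centroids affinely span $\mathbb R^d$ (again using that $K$ is a genuine $d$-polytope), we get $\bs A = \bs 0$ and $\bs c = \bs 0$. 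For $k \ge 2$ one can instead, more simply, invoke \eqref{dof2} with $\bs q$ ranging over $\mathbb P_0(K;\mathbb R^d)\subset\mathbb P_{k-2}(K;\mathbb R^d)$ to get $\bs c + \bs A\bar{\bs x}_K = \bs 0$ and then one more moment against a degree-one $\bs q$ to extract $\bs A$; but the face-moment argument works uniformly for all $k \ge 1$. Hence $\bs v = \bs 0$, the evaluation map is injective, $\dim\bs V_k(K) \le \#\mathcal N_k(K)$, and combined with the trivial reverse inequality (the defining conditions of $\bs V_k(K)$ impose at most $\#\mathcal N_k(K)$ independent constraints relative to a space on which the DoFs are unisolvent — or simply that $\mathbb P_k(K;\mathbb R^d)\subseteq\bs V_k(K)$ already contributes enough, refined by a dimension count of the constraint map) we conclude $\dim\bs V_k(K) = \#\mathcal N_k(K)$.
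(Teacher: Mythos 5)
There are two genuine gaps here, and the first one is fatal to the overall strategy. Your argument establishes (modulo the issue below) only the \emph{upper} bound $\dim\bs V_k(K)\le\#\mathcal N_k(K)$ via injectivity of the DoF evaluation map; the \emph{lower} bound, which you dismiss as "the trivial reverse inequality," is in fact the substantive content of the lemma. It cannot follow from $\mathbb P_k(K;\mathbb R^d)\subseteq\bs V_k(K)$, because $\dim\mathbb P_k(K;\mathbb R^d)<\#\mathcal N_k(K)$ in general (the paper's remark after the lemma computes the deficit as $(k-1)dC_{k+d-2}^{d-2}>0$ for $k\ge2$ even on a simplex), and "counting constraints" in the infinite-dimensional space $\bs H^1(K;\mathbb R^d)$ proves nothing. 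What is actually needed is surjectivity of the evaluation map, i.e.\ that for arbitrary data $\bs f_1\in\mathbb G_{k-2}^{\oplus}(K)$, $f_2\in\mathbb P_{k-1}(K)$, $\bs g_F\in\mathbb P_{k-1}(F;\mathbb R^d)$ (subject to the rigid-motion compatibility condition) the local Stokes problem with Neumann boundary condition \eqref{eq:localStokes} has a solution. This is exactly what the paper does: it introduces the pair space $\bs W_k(K)$ of $(\bs v,s)$, invokes the Babu\v{s}ka--Brezzi theory for the weak form \eqref{eq:localStokesmixed} to compute $\dim\bs W_k(K)$, and then computes the one-dimensional kernel $\{0\}\times\mathbb P_0(K)$ of the projection $(\bs v,s)\mapsto\bs v$ to get $\dim\bs V_k(K)$. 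Your proof contains no substitute for this existence step.

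The second gap is in your treatment of $(\div\bs v,s)_K$. You claim $\nabla s\in\mathbb P_{k-2}(K;\mathbb R^d)$ and hence $s\in\mathbb P_{k-1}(K)$, but this is false: the definition of $\bs V_k(K)$ only requires the \emph{sum} $\div\boldsymbol\varepsilon(\bs v)+\nabla s$ to lie in $\mathbb G_{k-2}^{\oplus}(K)$, and $\div\boldsymbol\varepsilon(\bs v)$ is merely a distribution, so neither summand is polynomial by itself. (The paper's kernel computation uses $\nabla s\in\mathbb G_{k-2}^{\oplus}(K)$ only \emph{after} setting $\bs v=\bs0$.) The correct and standard route, which you start down and then abandon, is to first prove $\div\bs v=0$ directly: for $q\in\mathbb P_{k-1}(K)$,
\begin{equation*}
(\div\bs v,q)_K=-(\bs v,\nabla q)_K+\sum_{F\in\mathcal F(K)}(\bs v,q\,\bs n)_F=0,
\end{equation*}
since $\nabla q\in\mathbb G_{k-2}(K)\subset\mathbb P_{k-2}(K;\mathbb R^d)$ is annihilated by \eqref{dof2} and $q\bs n|_F\in\mathbb P_{k-1}(F;\mathbb R^d)$ by \eqref{dof1}; as $\div\bs v\in\mathbb P_{k-1}(K)$ this gives $\div\bs v=0$, after which $(\div\bs v,s)_K=0$ for free and your energy identity, rigid-motion conclusion, and face-moment argument go through. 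With that repair your injectivity argument is sound and is essentially the unisolvence proof the paper delegates to the literature, but it still only yields one of the two inequalities needed for the lemma.
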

\begin{proof}
To count the dimension of $\bs V_k(K)$, we introduce the space
\begin{align*}
    \bs W_k(K):=\{&(\bs v, s)\in \bs H^1(K;\mathbb R^d)\times L^2(K):  \div\boldsymbol{\varepsilon}(\bs v)+\nabla s\in \mathbb G_{k-2}^{\oplus}(K),  \\
    &\div\bs v\in\mathbb P_{k-1}(K),  \textrm{ and } (\boldsymbol{\varepsilon}(\bs v)\bs n+s\bs n)|_F\in\mathbb P_{k-1}(F;\mathbb R^d) \;\forall~F\in\mathcal F(K)\}.
    \end{align*}
Consider the local Stokes problem with the Neumann boundary condition
\begin{equation}\label{eq:localStokes}
\begin{cases}
\qquad\qquad\quad\;\; -\div(\boldsymbol{\varepsilon}(\bs u)) - \nabla p=\bs f_1&\text{in
}K, \\
\qquad\qquad\qquad\qquad\qquad\quad\div\bs u=f_2&\text{in
}K, \\
\qquad\qquad\qquad\qquad\, \boldsymbol{\varepsilon}(\bs u)\bs n+p\bs n=\bs g_F&\text{on each }F\in\mathcal F(K),
\end{cases}
\end{equation}
where $\bs f_1\in\mathbb G_{k-2}^{\oplus}(K)$, $f_2\in\mathbb P_{k-1}(K)$, and $\bs g_F\in\mathbb P_{k-1}(F;\mathbb R^d)$.
Employing the Green's identity \eqref{eq:StokesGreen1}, we acquire
\begin{equation}\label{eq:localstokeseq1weak}
(\boldsymbol{\varepsilon}(\bs u), \boldsymbol{\varepsilon}(\bs v))_K + (\div \bs v, p)_K=(\bs f_1, \bs v)_K +
\sum_{F\in \mathcal F(K)}(\bs g_F, \bs v)_F.
\end{equation}
If taking $\bs v=\bs q\in \bs{RM}$ in \eqref{eq:localstokeseq1weak}, we have the compatibility condition
\begin{equation}\label{eq:compatibility}
(\bs f_1, \bs q)_K + \sum\limits_{F\in\mathcal F(K)}(\bs g_F, \bs q)_F=0 \quad\forall~\bs q\in\bs{RM}.
\end{equation}
Given $\bs f_1\in\mathbb G_{k-2}^{\oplus}(K)$, $f_2\in\mathbb P_{k-1}(K)$, and $\bs g_F\in\mathbb P_{k-1}(F;\mathbb R^d)$ satisfying the compatibility condition
\eqref{eq:compatibility},
due to \eqref{eq:localstokeseq1weak},
the weak formulation of the local problem~\eqref{eq:localStokes} is to find $\bs u\in \bs H^1(K;\mathbb R^d)/\bs{RM}$ and $p\in L^2(K)$ such that
\begin{equation}\label{eq:localStokesmixed}
\begin{cases}
(\boldsymbol{\varepsilon}(\bs u), \boldsymbol{\varepsilon}(\bs v))_K + (\div \bs v, p)_K=(\bs f_1, \bs v)_K +
\sum\limits_{F\in \mathcal F(K)}(\bs g_F, \bs v)_F, \\
\qquad\qquad\qquad\quad (\div \bs u, q)_K=(f_2, q)_K,
\end{cases}
\end{equation}
%\begin{align*}
%&(\boldsymbol{\varepsilon}(\bs u), \boldsymbol{\varepsilon}(\bs v))_K + (\div \bs v, p)_K + (\div \bs u, q)_K \notag\\
%=&(\bs f_1, \bs v)_K +
%\sum_{F\in \mathcal F^{1}(K)}(\bs g_F, \bs v)_F + (f_2, q)_K. \label{eq:StokesGreen}
%\end{align*}
for all $\bs v\in \bs H^1(K;\mathbb R^d)/\bs{RM}$ and $q\in L^2(K)$.
According to the Babu{\v{s}}ka-Brezzi theory \cite{BoffiBrezziFortin2013}, the mixed formulation \eqref{eq:localStokesmixed} is uniquely solvable. Hence 
$$
\resizebox{\textwidth}{!}{$
\dim(\bs W_k(K)/(\bs{RM}\times\{0\}))=d\dim \mathbb P_{k-2}(K)+1 + d\sum\limits_{F\in\mathcal F(K)}\dim\mathbb P_{k-1}(F) - \dim\bs{RM}.$
}
$$
Furthermore, if all the data $\bs f_1$, $f_2$ and $\bs g_F$ vanish, then the set of the solution $(\bs u, p)$ of the local Stokes problem \eqref{eq:localStokes} is exactly $\bs{RM}\times\{0\}$. As a result
\[
\dim\bs W_k(K)=d\dim \mathbb P_{k-2}(K)+1+ d\sum_{F\in\mathcal F(K)}\dim\mathbb P_{k-1}(F) .
\]

Define operator $\bs R_k^K: \bs W_k(K)\to\bs V_k(K)$ as $\bs R_k^K(\bs v, s):=\bs v$. It is obvious that $\bs R_k^K\bs W_k(K)=\bs V_k(K)$. For any $(\bs v, s)\in\bs W_k(K)\cap\ker(\bs R_k^K)$, it follows $\bs v=\bs0$. By the definition of $\bs W_k(K)$,  we have $$
\nabla s\in \mathbb G_{k-2}^{\oplus}(K) \quad\textrm{ and } \;\;\; s|_F\in\mathbb P_{k-1}(F) \quad \forall~F\in\mathcal F(K).
$$
Thus $\nabla s=0$, and $s\in \mathbb P_0(K)$. This implies $\bs W_k(K)\cap\ker(\bs R_k^K)=\{0\}\times\mathbb P_0(K)$ and $\dim\bs W_k(K)\cap\ker(\bs R_k^K)=1$. Thanks to
\[
\dim\bs V_k(K)=\dim\bs R_k^K\bs W_k(K)=\dim\bs W_k(K)-\dim\bs W_k(K)\cap\ker(\bs R_k^K),
\]
we acquire $\dim\bs V_k(K)=d\dim \mathbb P_{k-2}(K) + d\sum\limits_{F\in\mathcal F(K)}\dim\mathbb P_{k-1}(F)$.
\end{proof}

Thanks to Lemma~\ref{lem:VKdim}, following the argument in \cite[Lemma 3.1]{AyusodeDiosLipnikovManzini2016} and \cite[Proposition 3.2]{BeiraodaVeigaLovadinaVacca2017},
it is easy to show that the degrees of freedom~\eqref{dof1}-\eqref{dof2} are unisolvent for the local virtual element space $\bs V_k(K)$.
% \begin{lemma}\label{lem:dofsunisolvent}
% The degrees of freedom~\eqref{dof1}-\eqref{dof2} are unisolvent for the local virtual element space $\bs V_k(K)$.
% \end{lemma}
% \begin{proof}
% Let $\bs v\in \bs V_k(K)$ and suppose all the degrees of freedom~\eqref{dof1}-\eqref{dof2} vanish. We get from the integration by parts %~\eqref{eq:StokesGreen2}
% \[
% \|\div \bs v\|_{0,K}^2=-(\nabla\div \bs v, \bs v)_K + (\div \bs v, \bs v\cdot\bs n)_{\partial K}=0.
% \]
% Thus $\div \bs v=0$.
% Due to the definition of $\bs V_k(K)$, there exists some  $s\in L^2(K)$ such that  $\div\boldsymbol{\varepsilon}(\bs v)+\nabla s\in \mathbb G_{k-2}^{\oplus}(K)$, and
% $(\boldsymbol{\varepsilon}(\bs v)\bs n+s\bs n)|_F\in\mathbb P_{k-1}(F;\mathbb R^d)$ for each $F\in\mathcal F^{1}(K)$.
% Then it follows from~\eqref{eq:StokesGreen1} that
% \[
% \|\boldsymbol{\varepsilon}(\bs v)\|_{0,K}^2 + (\div \bs v, s)_K =0,
% \]
% which together with $\div \bs v=0$ indicates $\bs v\in \bs{RM}$. Applying the integration by parts,
% \[
% \|\nabla\bs v\|_{0,K}^2=((\nabla\bs v)\bs n, \bs v)_{\partial K}=0.
% \]
% Therefore $\bs v$ is constant, and thus $\bs v=\bs0$, as required.
% \end{proof}

The degrees of freedom~\eqref{dof1}-\eqref{dof2} are same as those in \cite{AyusodeDiosLipnikovManzini2016, CangianiGyryaManzini2016,ChenHuang2020}, but the spaces of shape functions $\bs V_k(K)$ are different.
We use the local Stokes problem with the Neumann boundary condition to define $\bs V_k(K)$,  while the local Poisson equation with the Neumann boundary condition is adopted in \cite{AyusodeDiosLipnikovManzini2016, CangianiGyryaManzini2016,ChenHuang2020}. 
% We refer to \cite{Wang2019} for a nonconforming Crouzeix–Raviart type element based on the generalized barycentric coordinates on general polygonal meshes.
The virtual elements in this paper are piecewise divergence-free. 
%  The divergence-free velocity means the mass conservation. It is pointed out in \cite{CharnyiHeisterOlshanskiiRebholz2017} that many important conservation laws are lost with the loss of mass conservation, including energy, momentum, angular momentum. A common theme for all ‘enhanced-physics’ based schemes is that the more physics is built into the discretization, the more accurate and stable the discrete solutions are, especially over longer time intervals \cite{CharnyiHeisterOlshanskiiRebholz2017}.
\begin{remark}
Assume $K$ is a simplex. It follows
\[
    \dim\bs V_k(K)-\dim\mathbb P_{k}(K;\mathbb R^d)=d(d+1)C_{k+d-2}^{d-1}+dC_{k+d-2}^{d}-dC_{k+d}^{d}=(k-1)dC_{k+d-2}^{d-2}.
\]
Hence, we have $\bs V_1(K)=\mathbb P_{1}(K;\mathbb R^d)$ for $k=1$, and the virtual element $(K$, $\mathcal N_1(K)$, $\bs V_1(K))$ is exactly the nonconforming $P_1$ element in~\cite{CrouzeixRaviart1973}. For $k\geq2$, $\mathbb P_{k}(K;\mathbb R^d)$ is a proper subset of $\bs V_k(K)$.
\end{remark}

\subsection{Local projection}
With the degrees of freedom~\eqref{dof1}-\eqref{dof2}, define a local operator $\bs\Pi_k^K: \bs H^1(K; \mathbb R^d)\to\mathbb P_k(K; \mathbb R^d)$ as follows: given $\bs w\in \bs H^1(K; \mathbb R^d)$, let $\bs \Pi_k^{K}\bs w\in\mathbb P_k(K; \mathbb R^d)$ and $P^K\bs w\in \mathbb P_{k-1}(K)$ be the solution of the local Stokes problem %\mnote{ define $Q_0^F$}
\begin{align}
(\bs\varepsilon(\bs\Pi_k^K\bs w), \bs\varepsilon(\bs v))_K + (\div\bs v,P^K\bs w)_K &=(\bs\varepsilon(\bs w), \bs\varepsilon(\bs v))_K\quad  \forall~\bs v\in \mathbb P_k(K; \mathbb R^d),\label{eq:H1projlocal1}\\
\div(\bs\Pi_k^K\bs w)&=Q_{k-1}^K(\div\bs w),\label{eq:H1projlocal2}\\
\bs Q_0^{K}(\bs\curl\,\bs\Pi_k^K\bs w)&=\bs Q_0^{K}(\bs\curl\,\bs w),\label{eq:H1projlocal3}\\
\bs Q_0^{K}(\bs \Pi_k^K\bs w)&=\bs Q_0^{K}\bs w.\label{eq:H1projlocal4}
\end{align}
Similarly as (3.2) in \cite{BrennerSung2018}, an equivalent formulation of the local Stokes problem~\eqref{eq:H1projlocal1}-\eqref{eq:H1projlocal4} is
\begin{align*}
(\!(\bs\Pi_k^K\bs w, \bs v)\!)_K + (\div\bs v,P^K\bs w)_K &=(\!(\bs w, \bs v)\!)_K\qquad  \forall~\bs v\in \mathbb P_k(K; \mathbb R^d),\\%\label{eq:H1projlocal1}\\
(\div(\bs\Pi_k^K\bs w), q)_K&=(\div\bs w, q)_K\quad \forall~q\in \mathbb P_{k-1}(K),%\label{eq:H1projlocal2}\\
\end{align*}
where
\[
(\!(\bs w, \bs v)\!)_K:=(\bs\varepsilon(\bs w), \bs\varepsilon(\bs v))_K + \bs Q_0^{K}(\bs\curl\,\bs w):\bs Q_0^{K}(\bs\curl\,\bs v)+\bs Q_0^{K}\bs w\cdot\bs Q_0^{K}\bs v
\]
with symbols $:$ and $\cdot$ being the inner products of the tensors and vectors respectively.

The inf-sup condition \eqref{eq:infsupPolynomial} indicates $(\mathbb P_k(K; \mathbb R^d), \mathbb P_{k-1}(K))$ is a stable pair for Stokes problem, thus
the local Stokes problem~\eqref{eq:H1projlocal1}-\eqref{eq:H1projlocal4} is uniquely solvable.
To simplify the notation, we will rewrite $\bs \Pi_k^{K}$ as $\bs\Pi^K$.
Apparently the projector $\bs\Pi^K$ can be computed using only the degrees of freedom~\eqref{dof1}-\eqref{dof2}.
The unique solvability of the local Stokes problem~\eqref{eq:H1projlocal1}-\eqref{eq:H1projlocal4} implies
the operator $\bs\Pi^K$ is a projector, i.e.
\begin{equation*}%\label{eq:projectPk}
\bs\Pi^K\bs q=\bs q\quad\forall~\bs q\in\mathbb P_k(K; \mathbb R^d).
\end{equation*}
 It follows from \eqref{eq:H1projlocal3}-\eqref{eq:H1projlocal4}, \eqref{eq:QKerror}-\eqref{eq:QFerror} and the Korn's inequality~\eqref{eq:Korn} that %on the kernel of $\bs\Pi^K$ %(cf. \cite[Lemma~A.5]{ChenHuang2020})
\begin{equation}\label{eq:poincare}
\|\bs v\|_{0,K} + h_K|\bs v|_{1,K} + \sum_{F\in\mathcal F(K)}h_K^{1/2}\|\bs v\|_{0,F} \lesssim h_K\|\bs\varepsilon(\bs v)\|_{0,K}\quad\forall~\bs v\in \ker(\bs\Pi^K),%\; K\in\mathcal T_h,
\end{equation}
where $\ker(\bs\Pi^K):=\{\bs v\in \bs H^1(K;\mathbb R^d): \bs\Pi^K\bs v=\bs0\}$.
Due to \eqref{eq:H1projlocal2}, %an important property of $\bs\Pi^K$ is that
the local Stokes-based projector $\bs\Pi^K$ commutes with the divergence operator, i.e.
\begin{equation}\label{eq:PiKdiv}
\div(\bs v-\bs\Pi^K\bs v)=0\quad \forall~\bs v\in \bs V_k(K).
\end{equation}

By the Babu{\v{s}}ka-Brezzi theory \cite{BoffiBrezziFortin2013}, we get from the inf-sup condition \eqref{eq:infsupPolynomial} that
$$
\|\bs\varepsilon(\bs\Pi^K\bs w)\|_{0,K}+\|P^K\bs w\|_{0,K}\lesssim \sup_{\bs v\in\mathbb P_k(K; \mathbb R^d), q\in \mathbb P_{k-1}(K)}\frac{(\bs\varepsilon(\bs w), \bs\varepsilon(\bs v))_K + (\div\bs w, q)_K}{\|\bs\varepsilon(\bs v)\|_{0,K}+\|q\|_{0,K}},
$$
% \begin{align*}
% &\|\bs\varepsilon(\bs\Pi^K\bs w)\|_{0,K}+\|P^K\bs w\|_{0,K}\\
% \lesssim &\sup_{\bs v\in\mathbb P_k(K; \mathbb R^d), q\in\mathbb P_{k-1}(K)}\frac{(\bs\varepsilon(\bs\Pi^K\bs w), \bs\varepsilon(\bs v))_K + (\div\bs v, P^K\bs w)_K + (\div(\bs\Pi^K\bs w), q)_K}{\|\bs\varepsilon(\bs v)\|_{0,K}+\|q\|_{0,K}}\\
% = &\sup_{\bs v\in\mathbb P_k(K; \mathbb R^d), q\in \mathbb P_{k-1}(K)}\frac{(\bs\varepsilon(\bs w), \bs\varepsilon(\bs v))_K + (\div\bs w, q)_K}{\|\bs\varepsilon(\bs v)\|_{0,K}+\|q\|_{0,K}},
% \end{align*}
which means the stability
\begin{equation}\label{eq:H1projbound}
\|\bs\varepsilon(\bs\Pi^K\bs w)\|_{0,K}\lesssim \|\bs\varepsilon(\bs w)\|_{0,K}\quad \forall~\bs w\in \bs H^1(K; \mathbb R^d).
\end{equation}

\subsection{Norm equivalence}

Given $\bs w, \bs v\in\bs H^1(K;\mathbb R^d)$,
let the stabilization
\[
S_K(\bs w, \bs v):=h_K^{-2}\big(\bs Q_{\mathbb G_{k-2}^{\oplus}}^K\bs w, \bs Q_{\mathbb G_{k-2}^{\oplus}}^K\bs v\big)_K+\sum_{F\in\mathcal F(K)}h_F^{-1}(\bs Q_{k-1}^F\bs w, \bs Q_{k-1}^F\bs v)_F,
\]
and the local bilinear form
\[
a_h^K(\bs w, \bs v):=(\bs Q_{k-1}^K\bs\varepsilon(\bs w), \bs Q_{k-1}^K\bs\varepsilon(\bs v))_K+S_{K}(\bs w-\bs\Pi^{K}\bs w, \bs v-\bs\Pi^{K}\bs v).
\]
From \eqref{eq:poincare} and \eqref{eq:H1projbound}, we have for any $\bs w, \bs v\in\bs H^1(K;\mathbb R^d)$ that
\begin{equation}\label{eq:20190829-3}
a_h^K(\bs w, \bs v)\leq \left(a_h^K(\bs w, \bs w)a_h^K(\bs v, \bs v)\right)^{1/2}\lesssim \|\bs\varepsilon(\bs w)\|_{0,K}\|\bs\varepsilon(\bs v)\|_{0,K}. % \quad\forall~\bs w, \bs v\in\bs H^1(K;\mathbb R^d).
\end{equation}

%For any $F\in\mathcal{F}^1(K)$, let $\mathbb R_F^{n-1}$ be the $(n-1)$-dimensional affine space passing through $F$, $\mathcal{F}_F^1(K):=\{F'\in\mathcal{F}^1(K): F'\subset\mathbb R_F^{n-1}\}$, and
%$$
%\lambda_{F}:=\bs n_{F}^{\intercal}\frac{\boldsymbol x-\boldsymbol x_F}{h_K}.
%$$
%Apparently $\lambda_{F}|_F=0$.
%Define bubble function
%$$
%b_F:=\bigg(\prod_{F'\in\mathcal{F}^1(K)\backslash\mathcal{F}_F^1(K)}\lambda_{F'}\bigg)\bigg(\prod_{F'\in\mathcal{F}_F^1(K)}\prod_{e\in\mathcal F^{1}(F')}\bs n_{F',e}^{\intercal}\frac{\boldsymbol x-\boldsymbol x_{e}}{h_K}\bigg),
%$$
%for each $F\in \mathcal F^{1}(K)$.

Henceforth we will assume the following norm equivalence holds
\begin{equation}\label{eq:curlpolyequivalence}
h_K\|\bs\curl\,\bs q\|_{0,K}\eqsim \|\bs q\|_{0,K}\quad \forall~\bs q\in \mathbb G_{k-2}^{\oplus}(K).
\end{equation}
We first prove the norm equivalence \eqref{eq:curlpolyequivalence} for some special choices of $\mathbb G_{k-2}^{\oplus}(K)$.
\begin{lemma}
When $\mathbb G_{k-2}^{\oplus}(K)$ is the $L^2$-orthogonal complement space of $\mathbb G_{k-2}(K)$ in $\mathbb P_{k-2}(K; \mathbb R^d)$,
the norm equivalence \eqref{eq:curlpolyequivalence} holds.
\end{lemma}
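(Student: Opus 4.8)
The plan is to prove the two estimates in \eqref{eq:curlpolyequivalence} separately, observing first that both are vacuous when $k\le 2$, since then $\mathbb G_{k-2}^{\oplus}(K)=\{\bs 0\}$, so we may assume $k\ge 3$. For the bound $h_K\|\bs\curl\,\bs q\|_{0,K}\lesssim\|\bs q\|_{0,K}$ I would use an inverse estimate valid on all of $\mathbb P_{k-2}(K;\mathbb R^d)$: every entry of $\bs\curl\,\bs q$ is a first-order partial derivative of an entry of $\bs q$, so it suffices to prove $\|\nabla q\|_{0,K}\lesssim h_K^{-1}\|q\|_{0,K}$ for $q\in\mathbb P_{k-2}(K)$. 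Transferring $q$ and $\nabla q$ to the regular inscribed simplex $K_s$ by \eqref{eqn:polynomialequiv} (restriction commutes with $\nabla$ because $q$ is a global polynomial) and invoking the classical inverse inequality on $K_s$, which is similar to a fixed unit simplex and has diameter $\eqsim h_K$, gives this at once.

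The reverse bound $\|\bs q\|_{0,K}\lesssim h_K\|\bs\curl\,\bs q\|_{0,K}$ for $\bs q\in\mathbb G_{k-2}^{\oplus}(K)$ is the heart of the matter. The reason it holds is that $\bs\curl$ is injective on $\mathbb G_{k-2}^{\oplus}(K)$: if $\bs\curl\,\bs q=\bs 0$, then the polynomial $1$-form associated with $\bs q$ is closed, hence exact on the star-shaped (thus contractible) domain $K$, so $\bs q=\nabla\phi$ with $\phi\in\mathbb P_{k-1}(K)$; since $\bs q$ is $L^2(K)$-orthogonal to $\nabla\mathbb P_{k-1}(K)$ this forces $\|\bs q\|_{0,K}^2=(\bs q,\nabla\phi)_K=0$. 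The genuine difficulty is that $\mathbb G_{k-2}^{\oplus}(K)$ is defined through the $L^2(K)$-inner product and therefore truly varies with $K$, so one cannot simply quote finite-dimensional norm equivalence on a fixed reference space; the whole point is to make the constant independent of $h$.

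To get around this I would replace the $K$-orthogonality by $K_s$-orthogonality at the cost of a harmless factor. Let $\bs g^\ast\in\nabla\mathbb P_{k-1}(K)$ be the best $L^2(K_s)$-approximation of $\bs q$ in $\nabla\mathbb P_{k-1}(K)$ and set $\bs r:=\bs q-\bs g^\ast$; then $\bs r$ is $L^2(K_s)$-orthogonal to $\nabla\mathbb P_{k-1}(K)$ and $\bs\curl\,\bs r=\bs\curl\,\bs q$. Because $\bs q\perp_{L^2(K)}\bs g^\ast$, one has $\|\bs g^\ast\|_{0,K}^2=-(\bs g^\ast,\bs r)_K\le\|\bs g^\ast\|_{0,K}\|\bs r\|_{0,K}$, hence $\|\bs q\|_{0,K}\le\|\bs r\|_{0,K}+\|\bs g^\ast\|_{0,K}\le 2\|\bs r\|_{0,K}$. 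It then remains only to bound $\|\bs r\|_{0,K}\lesssim h_K\|\bs\curl\,\bs r\|_{0,K}$.

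For this last step, by \eqref{eqn:polynomialequiv} applied to $\bs r$ and to $\bs\curl\,\bs r$ it suffices to prove the same estimate on $K_s$, where $\bs r|_{K_s}$ lies in the $L^2(K_s)$-orthogonal complement of $\nabla\mathbb P_{k-1}(K_s)$ in $\mathbb P_{k-2}(K_s;\mathbb R^d)$; scaling $K_s$ to a fixed unit regular simplex $\Delta$ (all regular simplices being similar, and $L^2$- and curl-norms being invariant under translations and rotations) turns it into a fixed finite-dimensional inequality on $\Delta$, which holds by the injectivity of $\bs\curl$ on $(\nabla\mathbb P_{k-1}(\Delta))^{\perp}$ proved exactly as above, with a constant depending only on $k$ and $d$. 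Scaling back reinstates the $h_K$, and combining with $\|\bs q\|_{0,K}\le 2\|\bs r\|_{0,K}$ completes the argument. As flagged, the one delicate point is this uniformity argument, and the device that makes it work is trading the $K$-orthogonality for the $K_s$-orthogonality without altering $\bs\curl\,\bs q$.
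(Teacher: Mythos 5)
Your proof is correct and follows essentially the same route as the paper's: both arguments trade the $L^2(K)$-orthogonality for orthogonality on a fixed-shape subdomain comparable to $K$ (you use the inscribed simplex $K_s$, the paper uses the ball $B_K$) by subtracting a gradient, which leaves $\bs\curl\,\bs q$ unchanged, then invoke the scaled finite-dimensional norm equivalence there and transfer back via \eqref{eqn:polynomialequiv}. The only cosmetic difference is that the paper gets $\|\bs q\|_{0,K}\le\|\bs q-\nabla r\|_{0,K}$ in one line from the $L^2(K)$-orthogonality (Pythagoras), where you use the triangle inequality plus the bound $\|\bs g^\ast\|_{0,K}\le\|\bs r\|_{0,K}$.
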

\begin{proof}
Let $r \in \mathbb P_{k-1}(K)$ satisfy
\[
(\nabla r, \nabla s)_{B_K}=(\bs q, \nabla s)_{B_K}\quad \forall~s \in \mathbb P_{k-1}(B_K).
\]
Then $(\bs q-\nabla r)|_{B_K}\in\mathbb G_{k-2}^{\oplus}(B_K)$.
Since $\|\bs\curl\cdot\|_{0,B_K}$ is a norm on $\mathbb G_{k-2}^{\oplus}(B_K)$,  we get from the scaling argument that
\[
\|\bs q-\nabla r\|_{0,B_K}\lesssim h_K\|\bs\curl\,(\bs q-\nabla r)\|_{0,B_K}=h_K\|\bs\curl\,\bs q\|_{0,B_K}\leq h_K\|\bs\curl\,\bs q\|_{0,K}.
\]
Using the fact $\bs q\in \mathbb G_{k-2}^{\oplus}(K)$, we obtain from \eqref{eqn:polynomialequiv} that
\[
\|\bs q\|_{0,K}\leq \|\bs q-\nabla r\|_{0,K}\lesssim \|\bs q-\nabla r\|_{0,B_K}\lesssim h_K\|\bs\curl\,\bs q\|_{0,K}.
\]
The other side follows from the inverse inequality \eqref{eq:polyinverse}.
\end{proof}

\begin{lemma}
If $\mathbb G_{k-2}^{\oplus}(K)$ is given by \eqref{eq:Gk-2perpchoice1}, %or \eqref{eq:Gk-2perpchoice2},
the norm equivalence \eqref{eq:curlpolyequivalence} holds.
\end{lemma}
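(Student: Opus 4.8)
The bound $h_K\|\bs\curl\,\bs q\|_{0,K}\lesssim\|\bs q\|_{0,K}$ is the routine one: applying the inverse estimate \eqref{eq:polyinverse} to the polynomial $\bs\curl\,\bs q$ and using $\|\bs\curl\,\bs q\|_{-1,K}\lesssim\|\bs q\|_{0,K}$ (an integration by parts) gives it at once, exactly as for ``the other side'' in the preceding lemma. So the whole task is to prove $\|\bs q\|_{0,K}\lesssim h_K\|\bs\curl\,\bs q\|_{0,K}$ for $\bs q\in\mathbb G_{k-2}^{\oplus}(K)$.

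The plan is to reduce this to a single, mesh-independent inequality on a fixed ball. I would place the origin at the center of $B_K$ (so that $|\bs x|\le h_K$ on $K$), and, since $K_s\subset B_K\subset K$, use \eqref{eqn:polynomialequiv} to replace the $L^2(K)$-norms by $L^2(B_K)$-norms on polynomials; thus it suffices to show $\|\bs q\|_{0,B_K}\lesssim h_K\|\bs\curl\,\bs q\|_{0,B_K}$. Rescaling $\bs x\mapsto\bs x/\rho$ with $\rho$ the radius of $B_K$ (which is comparable to $h_K$ since the chunkiness parameter is bounded) maps $B_K$ to the unit ball $B$ centered at the origin, turns $\bs\curl$ into $\rho^{-1}$ times the curl in the rescaled variable, and — because $r\mapsto\bs x^{\perp}r$ and $\bs p\mapsto\bs x\wedge\bs p$ commute with scaling of $\bs x$ — carries $\mathbb G_{k-2}^{\oplus}(K)$ onto the space $\mathbb G_{k-2}^{\oplus}(B)$ of the same form \eqref{eq:Gk-2perpchoice1} over $B$. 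All powers of $\rho$ cancel, and the claim becomes $\|\widehat{\bs q}\|_{0,B}\lesssim\|\bs\curl\,\widehat{\bs q}\|_{0,B}$ for $\widehat{\bs q}\in\mathbb G_{k-2}^{\oplus}(B)$.

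On this fixed finite-dimensional space $\widehat{\bs q}\mapsto\|\widehat{\bs q}\|_{0,B}$ is a norm, so by equivalence of norms it is enough to show that $\widehat{\bs q}\mapsto\|\bs\curl\,\widehat{\bs q}\|_{0,B}$ is a norm as well, i.e. that $\bs\curl$ is injective on $\mathbb G_{k-2}^{\oplus}(B)$. If $\bs\curl\,\widehat{\bs q}=\bs0$, then $\widehat{\bs q}$ has symmetric Jacobian, hence $\widehat{\bs q}=\nabla\widehat s$ for some $\widehat s\in\mathbb P_{k-1}$, so $\widehat{\bs q}\in\mathbb G_{k-2}(B)\cap\mathbb G_{k-2}^{\oplus}(B)=\{\bs0\}$ because \eqref{eq:Gk-2perpchoice1} furnishes a genuine direct sum complement in \eqref{eq:Pkdirectsum} (see (3.11) in \cite{Arnold2018,ArnoldFalkWinther2006}; alternatively a dimension count together with the identity $\bs x\cdot\bs q=0$ on $\mathbb G_{k-2}^{\oplus}$, which forces a curl-free member to vanish). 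This gives the injectivity and hence \eqref{eq:curlpolyequivalence}; pleasantly, the argument treats $d=2$ and $d=3$ uniformly.

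The step I expect to be the main obstacle is the bookkeeping in the reduction: one must check that $\mathbb G_{k-2}^{\oplus}(K)$ is invariant under the rescaling (so the reduced inequality is truly posed on one fixed space $\mathbb G_{k-2}^{\oplus}(B)$), and confirm that \eqref{eq:Gk-2perpchoice1} really satisfies \eqref{eq:Pkdirectsum} so that the intersection in the injectivity argument is trivial. Once these are settled, the curl-free-implies-gradient step and the finite-dimensional norm equivalence are immediate.
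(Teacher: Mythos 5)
Your argument is correct and follows essentially the same route as the paper: restrict to the ball $B_K$, invoke a scaling argument reducing matters to a fixed finite-dimensional space on which $\|\bs\curl\,\cdot\|_{0}$ is a norm, and transfer back to $K$ via \eqref{eqn:polynomialequiv}. The only difference is that you spell out what the paper leaves implicit in the phrase ``scaling argument'' — namely the injectivity of $\bs\curl$ on $\mathbb G_{k-2}^{\oplus}$ via the direct sum \eqref{eq:Pkdirectsum} — and you treat $d=2$ and $d=3$ uniformly where the paper only writes out $d=2$.
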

\begin{proof}
% We only give the proof of the case
% $\mathbb G_{k-2}^{\oplus}(K)=\bs x^{\perp}\mathbb P_{k-3}(K)$ for $d=2$.
We only give the proof $d=2$.
For any $q\in\mathbb P_{k-3}(K)$, noting the fact that $(\bs x^{\perp}q)|_{B_K}\in \mathbb G_{k-2}^{\oplus}(B_K)$, we achieve from the scaling argument that
\[
\|\bs x^{\perp}q\|_{0,B_K}\eqsim h_K\|\bs\curl\,(\bs x^{\perp}q)\|_{0,B_K},
\]
which combined with \eqref{eqn:polynomialequiv} implies \eqref{eq:curlpolyequivalence}.
\end{proof}

\begin{lemma}
For any $\bs v\in \bs V_k(K)$ and $s\in L^2(K)$ satisfying $\div\boldsymbol{\varepsilon}(\bs v)+\nabla s\in \mathbb G_{k-2}^{\oplus}(K)$, it holds
\begin{equation}\label{eq:inverseeq1}
h_K\|\div\boldsymbol{\varepsilon}(\bs v)+\nabla s\|_{0,K}\lesssim \|\bs\varepsilon(\bs v)\|_{0,K}.
\end{equation}
\end{lemma}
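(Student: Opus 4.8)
The plan is to prove an \emph{inf-sup-type} lower bound for $\bs g:=\div\boldsymbol{\varepsilon}(\bs v)+\nabla s\in\mathbb G_{k-2}^{\oplus}(K)$ by testing it against a carefully built divergence-free field that vanishes on $\partial K$, and then to control that field via the norm equivalence \eqref{eq:curlpolyequivalence} and inverse estimates. Let $K_s$ be the regular inscribed simplex of $B_K$ (as in \eqref{eqn:polynomialequiv}) and let $b_s:=\prod_{i=0}^{d}\lambda_i$ be the product of its barycentric coordinates, so $0< b_s\lesssim1$ on $K_s$ and $b_s=0$ on $\partial K_s$. Define
\[
\bs\phi:=-\div\big(b_s^2\,\bs\curl\,\bs g\big)\ \text{on }K_s,\qquad \bs\phi:=\bs0\ \text{on }K\setminus K_s .
\]
Since $\bs\curl\,\bs g$ is skew-symmetric we have $\div\bs\phi=0$ on $K_s$; since each component of $b_s^2\,\bs\curl\,\bs g$ carries the factor $b_s^2$, the field $\bs\phi$ and hence also its zero extension vanish on $\partial K_s$, so $\bs\phi\in\bs H_0^1(K;\mathbb R^d)$ with $\div\bs\phi=0$ (and $\bs\phi$ is a polynomial of degree bounded in terms of $k$ and $d$ on $K_s$).

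Next I would insert $\bs\phi$ into the Green's identity \eqref{eq:StokesGreen1}: because $\bs\phi\in\bs H_0^1(K;\mathbb R^d)$ the boundary term disappears, and because $\div\bs\phi=0$ the pressure term disappears, leaving
\[
(\bs g,\bs\phi)_K=(\div\boldsymbol{\varepsilon}(\bs v)+\nabla s,\bs\phi)_K=-(\boldsymbol{\varepsilon}(\bs v),\boldsymbol{\varepsilon}(\bs\phi))_K\le\|\boldsymbol{\varepsilon}(\bs v)\|_{0,K}\,|\bs\phi|_{1,K}.
\]
On the other hand, integrating by parts over $K_s$ (the boundary term vanishes since $b_s=0$ on $\partial K_s$) and using $\partial_j g_i=\boldsymbol{\varepsilon}(\bs g)_{ij}+\tfrac12(\bs\curl\,\bs g)_{ij}$ together with the orthogonality of symmetric and skew-symmetric tensors gives the exact identity
\[
(\bs g,\bs\phi)_K=\tfrac12\int_{K_s}b_s^2\,|\bs\curl\,\bs g|^2\dx .
\]
A standard bubble-function estimate for the bounded-degree polynomial $|\bs\curl\,\bs g|^2$, followed by the polynomial norm equivalence \eqref{eqn:polynomialequiv} and the hypothesis \eqref{eq:curlpolyequivalence}, yields $(\bs g,\bs\phi)_K\gtrsim\|\bs\curl\,\bs g\|_{0,K_s}^2\eqsim\|\bs\curl\,\bs g\|_{0,K}^2\eqsim h_K^{-2}\|\bs g\|_{0,K}^2$. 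For the competing upper bound, since $b_s^2\,\bs\curl\,\bs g$ is a bounded-degree polynomial on the shape-regular simplex $K_s$ of size $\eqsim h_K$, two applications of the inverse inequality, the bound $0\le b_s^2\le1$, \eqref{eqn:polynomialequiv} and \eqref{eq:curlpolyequivalence} give
\[
|\bs\phi|_{1,K}=|\bs\phi|_{1,K_s}\lesssim h_K^{-2}\|b_s^2\,\bs\curl\,\bs g\|_{0,K_s}\lesssim h_K^{-2}\|\bs\curl\,\bs g\|_{0,K}\eqsim h_K^{-3}\|\bs g\|_{0,K}.
\]

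Combining the three displays, $h_K^{-2}\|\bs g\|_{0,K}^2\lesssim(\bs g,\bs\phi)_K\le\|\boldsymbol{\varepsilon}(\bs v)\|_{0,K}\,|\bs\phi|_{1,K}\lesssim h_K^{-3}\|\boldsymbol{\varepsilon}(\bs v)\|_{0,K}\|\bs g\|_{0,K}$; dividing by $h_K^{-2}\|\bs g\|_{0,K}$ (the case $\bs g=\bs0$ being trivial) gives exactly \eqref{eq:inverseeq1}. I expect the main obstacle to be arranging a single test field that simultaneously has all three needed features: divergence-free (mandatory, since $s$ is merely $L^2$, so $(\div\bs\phi,s)_K$ must vanish identically), vanishing on $\partial K$ (mandatory, since $s$ carries no trace control), and "dual-sized like $h_K^{-1}\bs g$". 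The operator $\bs\phi\mapsto-\div\big(b_s^2\,\bs\curl(\cdot)\big)$ accomplishes this, but one must verify carefully that the zero extension of $\bs\phi$ genuinely lies in $\bs H_0^1(K;\mathbb R^d)$ (it does, because $\overline{K_s}\subset K$ and $\bs\phi$ vanishes on $\partial K_s$) and that the polynomial degree of $b_s^2\,\bs\curl\,\bs g$ stays bounded in $k$ and $d$, so that the inverse inequalities and bubble estimates have constants independent of $h$ and $\nu$.
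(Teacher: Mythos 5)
Your proof is correct, but it follows a genuinely different route from the paper's. The paper's argument is a three-line computation: since $\bs\curl\nabla s=\bs 0$, it writes $\|\bs g\|_{0,K}\lesssim h_K\|\bs\curl\,\bs g\|_{0,K}=h_K\|\bs\curl(\div\boldsymbol{\varepsilon}(\bs v))\|_{0,K}$ via \eqref{eq:curlpolyequivalence}, and then bounds this by $h_K^{-1}\|\bs\curl(\div\boldsymbol{\varepsilon}(\bs v))\|_{-2,K}\lesssim h_K^{-1}\|\boldsymbol{\varepsilon}(\bs v)\|_{0,K}$ using the negative-norm inverse inequality \eqref{eq:polyinverse} and the fact that $\bs\curl\circ\div$ is a second-order operator mapping $\bs L^2$ boundedly into $\bs H^{-2}$. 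You replace the negative-norm step by an explicit duality construction: the test field $\bs\phi=-\div(b_s^2\,\bs\curl\,\bs g)$, supported in $K_s$, divergence-free (so the $L^2$ pressure $s$ drops out of the pairing) and vanishing on $\partial K$, together with the bubble identity $(\bs g,\bs\phi)_K=\tfrac12\int_{K_s}b_s^2|\bs\curl\,\bs g|^2\dx$. All your individual steps check out: $\div\div$ of a skew-symmetric field is zero, the symmetric/skew-symmetric orthogonality kills the $\boldsymbol{\varepsilon}(\bs g)$ contribution, the Lipschitz zero extension lies in $\bs H_0^1(K;\mathbb R^d)$, and the degrees stay bounded in $k$ and $d$ so the inverse and bubble constants are uniform. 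Both proofs hinge on the same two ideas --- apply $\bs\curl$ to annihilate the unknown pressure gradient, then invoke \eqref{eq:curlpolyequivalence} --- so conceptually they are close; what your version buys is that it avoids negative Sobolev norms entirely and makes the duality completely constructive, at the price of being considerably longer than the paper's argument, which simply quotes \eqref{eq:polyinverse} (whose proof in Lemma~2.1 already packages the same scaling information).
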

\begin{proof}
Since $\div\boldsymbol{\varepsilon}(\bs v)+\nabla s\in\mathbb G_{k-2}^{\oplus}(K)$, we get from \eqref{eq:curlpolyequivalence} and \eqref{eq:polyinverse} that
\begin{align*}
\|\div\boldsymbol{\varepsilon}(\bs v)+\nabla s\|_{0,K}&\lesssim h_K\|\bs\curl\,(\div\boldsymbol{\varepsilon}(\bs v)+\nabla s)\|_{0,K} = h_K\|\bs\curl\,(\div\boldsymbol{\varepsilon}(\bs v))\|_{0,K} \\
&\lesssim h_K^{-1}\|\bs\curl\,(\div\boldsymbol{\varepsilon}(\bs v))\|_{-2,K}\lesssim h_K^{-1}\|\bs\varepsilon(\bs v)\|_{0,K},
\end{align*}
as required.
\end{proof}

For any $F\in\mathcal{F}(K)$, let $\mathbb R_F^{d-1}$ be the $(d-1)$-dimensional affine space passing through $F$, $\mathcal{F}_F(K):=\{F'\in\mathcal{F}(K): F'\subset\mathbb R_F^{d-1}\}$, and
$
\lambda_{F}:=\bs n_{F}^{\intercal}(\boldsymbol x-\boldsymbol x_F)/h_K.
$
Clearly $\lambda_{F}|_F=0$.
Define face bubble function
$$
b_F:=\bigg(\prod_{F'\in\mathcal{F}(K)\backslash\mathcal{F}_F(K)}\lambda_{F'}\bigg)\bigg(\prod_{F'\in\mathcal{F}_F(K)}\prod_{e\in\mathcal E(F')}\bs n_{F',e}^{\intercal}\frac{\boldsymbol x-\boldsymbol x_{e}}{h_K}\bigg),
$$
for each $F\in \mathcal F(K)$. The first factor in the definition of $b_F$ is to ensure that $b_F$ vanishes on all $(d-1)$-dimensional faces of $K$ except those sharing the same affine hyperplane with $F$. And the second factor is to ensure that $b_F$ vanishes on the boundary of all $(d-1)$-dimensional faces of $K$ sharing the same affine hyperplane with $F$. Thus $b_F$ vanishes on all $(d-2)$-dimensional faces of $K$.

\begin{lemma}\label{lem:inverseeq2}
For each $F\in\mathcal F(K)$, we have for any $\bs v\in \bs V_k(K)$ and $s\in L^2(K)$ satisfying $\div\boldsymbol{\varepsilon}(\bs v)+\nabla s\in \mathbb G_{k-2}^{\oplus}(K)$ that
\begin{equation}\label{eq:inverseeq2}
\sum_{F'\in\mathcal{F}_F(K)}h_K^{1/2}\big\|\boldsymbol{\varepsilon}(\bs v)\bs n+(s-Q_0^K(s+\frac{1}{d}\div\bs v))\bs n\big\|_{0,F'}\lesssim \|\bs\varepsilon(\bs v)\|_{0,K}.
\end{equation}
\end{lemma}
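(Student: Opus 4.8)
The plan is to test the piecewise polynomial $\boldsymbol{\varepsilon}(\bs v)\bs n+(s-Q_0^K(s+\frac1d\div\bs v))\bs n$ on each $F'\in\mathcal F_F(K)$ against a suitable bubble-weighted polynomial extension and integrate by parts back into $K$. Fix $F\in\mathcal F(K)$ and set $\bs g:=\boldsymbol{\varepsilon}(\bs v)\bs n+(s-c_0)\bs n$ on $\bigcup_{F'\in\mathcal F_F(K)}F'$, where $c_0:=Q_0^K(s+\frac1d\div\bs v)$; note that $\bs g|_{F'}\in\mathbb P_{k-1}(F';\mathbb R^d)$ for each $F'\in\mathcal F_F(K)$ by the definition of $\bs V_k(K)$, since the modification of $s$ by a constant and by a multiple of $\div\bs v\in\mathbb P_{k-1}(K)$ keeps the trace in $\mathbb P_{k-1}(F';\mathbb R^d)$ (here $\div\bs v\bs n|_{F'}\in\mathbb P_{k-1}(F';\mathbb R^d)$ trivially). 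Let $\bs w_{F'}\in\mathbb P_{k-1}(F';\mathbb R^d)$ be $\bs g|_{F'}$ extended as a polynomial constant in the normal direction, and consider $\bs \Phi:=\sum_{F'\in\mathcal F_F(K)} b_F\,\bs w_{F'}$ (or rather the single product $b_F$ times the common polynomial that equals $\bs g$ on the hyperplane $\mathbb R_F^{d-1}$). The bubble $b_F$ vanishes on $\partial K\setminus\bigcup_{F'\in\mathcal F_F(K)}F'$ and on all $(d-2)$-faces, so $\bs\Phi\in\bs H_0^1$-like on the complementary part of $\partial K$, which is exactly what makes the integration by parts collapse to the $F'$ terms.

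\textbf{Key steps.} First, by a standard scaling/equivalence-of-norms argument on the reference configuration (using the quasi-uniform virtual triangulation $\mathcal T_K$ from (A2) and \eqref{eqn:polynomialequiv}), establish $\|b_F\bs w_{F'}\|_{0,F'}^2\eqsim\|\bs w_{F'}\|_{0,F'}^2\eqsim\|\bs g\|_{0,F'}^2$ together with the inverse-type bounds $h_K^{1/2}\|b_F\bs w_{F'}\|_{0,F'}+h_K^{-1/2}\|b_F\bs w_{F'}\|_{0,K}+h_K^{1/2}|b_F\bs w_{F'}|_{1,K}\lesssim h_K\|\bs g\|_{0,F'}$, i.e.\ the bubble extension is $H^1(K)$-bounded in the right powers of $h_K$. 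Second, apply the Green identity \eqref{eq:StokesGreen1} with $\bs u=\bs v$, $p=s-c_0$, and $\bs\Phi$ as the test function; because $\bs\Phi$ vanishes on all faces except those in $\mathcal F_F(K)$, the boundary term reduces to $\sum_{F'\in\mathcal F_F(K)}(\bs g,\bs\Phi)_{F'}$, giving
\begin{equation*}
\sum_{F'\in\mathcal F_F(K)}(\bs g,\bs\Phi)_{F'}=(\boldsymbol{\varepsilon}(\bs v),\boldsymbol{\varepsilon}(\bs\Phi))_K+(\div\bs\Phi,s-c_0)_K+(\div\boldsymbol{\varepsilon}(\bs v)+\nabla s,\bs\Phi)_K.
\end{equation*}
Third, bound the right-hand side: the first term by $\|\boldsymbol{\varepsilon}(\bs v)\|_{0,K}|\bs\Phi|_{1,K}$; for the second write $(\div\bs\Phi,s-c_0)_K$ and use that $s-c_0$ can be replaced modulo its $L^2$-projection onto constants—more carefully, since $c_0=Q_0^K(s+\frac1d\div\bs v)$, one has $s-c_0=\dev$-type combination so that $(\div\bs\Phi,s-c_0)_K=(\boldsymbol{\varepsilon}(\bs\Phi),(s-c_0)\bs I)_K$ can be absorbed using $\|s-c_0\|$ controlled via \eqref{eq:tensorineqlty} applied to $\bs\tau=\boldsymbol{\varepsilon}(\bs v)+(s-c_0)\bs I+\frac1d(\div\bs v)\bs I$ or directly by noting $(\div\bs\Phi,s-c_0)_K=((\div\bs v)/d+(s-c_0),\div\bs\Phi)_K-((\div\bs v)/d,\div\bs\Phi)_K$ and estimating each piece—the cleaner route is to observe $\tr\bigl(\boldsymbol{\varepsilon}(\bs v)+(s-c_0)\bs I\bigr)$ has vanishing $Q_0^K$ by the choice of $c_0$, then invoke \eqref{eq:tensorineqlty} together with \eqref{eq:inverseeq1} to get $\|\boldsymbol{\varepsilon}(\bs v)+(s-c_0)\bs I\|_{0,K}\lesssim\|\boldsymbol{\varepsilon}(\bs v)\|_{0,K}$; the third term by $h_K\|\div\boldsymbol{\varepsilon}(\bs v)+\nabla s\|_{0,K}\cdot h_K^{-1}\|\bs\Phi\|_{0,K}\lesssim\|\boldsymbol{\varepsilon}(\bs v)\|_{0,K}h_K^{-1}\|\bs\Phi\|_{0,K}$ via \eqref{eq:inverseeq1}. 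Fourth, on the left-hand side use the equivalence from step one to get $\sum_{F'}(\bs g,\bs\Phi)_{F'}\eqsim\sum_{F'}\|\bs g\|_{0,F'}^2$, and combine with the $H^1(K)$-bound $|\bs\Phi|_{1,K}+h_K^{-1}\|\bs\Phi\|_{0,K}\lesssim h_K^{-1/2}\bigl(\sum_{F'}\|\bs g\|_{0,F'}^2\bigr)^{1/2}$ to cancel one power of $\bigl(\sum_{F'}\|\bs g\|_{0,F'}^2\bigr)^{1/2}$ and arrive at \eqref{eq:inverseeq2}.

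\textbf{Main obstacle.} The delicate point is handling the pressure-like term $(\div\bs\Phi,s-c_0)_K$: $s$ itself is only an $L^2(K)$ function a priori (it is the ``virtual'' pressure associated with $\bs v$), so one cannot bound $\|s\|_{0,K}$ directly, and the whole benefit of subtracting $Q_0^K(s+\frac1d\div\bs v)$ is that the combination $\boldsymbol{\varepsilon}(\bs v)+(s-c_0)\bs I$ has trace with zero mean, letting \eqref{eq:tensorineqlty} convert an $L^2$ estimate of that full matrix into the deviatoric part (which is $\dev\boldsymbol{\varepsilon}(\bs v)$, hence $\lesssim\|\boldsymbol{\varepsilon}(\bs v)\|_{0,K}$) plus $\|\div(\boldsymbol{\varepsilon}(\bs v)+(s-c_0)\bs I)\|_{-1,K}=\|\div\boldsymbol{\varepsilon}(\bs v)+\nabla s\|_{-1,K}\lesssim h_K\|\div\boldsymbol{\varepsilon}(\bs v)+\nabla s\|_{0,K}\lesssim\|\boldsymbol{\varepsilon}(\bs v)\|_{0,K}$ by \eqref{eq:polyinverseEqual} and \eqref{eq:inverseeq1}. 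A secondary technical nuisance is justifying all the bubble-function norm equivalences uniformly in $h_K$ and in the (possibly many) faces $F'$ sharing the hyperplane $\mathbb R_F^{d-1}$; this is where mesh assumption (A2) and the polynomial equivalence \eqref{eqn:polynomialequiv} on the inscribed simplex are used, exactly as in the analogous scalar estimates of \cite{Huang2020}.
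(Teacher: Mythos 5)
Your proposal follows essentially the same route as the paper's proof: you form the same tensor $\bs\tau=\boldsymbol{\varepsilon}(\bs v)+(s-Q_0^K(s+\frac{1}{d}\div\bs v))\bs I$, exploit $Q_0^K(\tr\bs\tau)=0$ together with \eqref{eq:tensorineqlty}, \eqref{eq:polyinverseEqual} and \eqref{eq:inverseeq1} to bound $\|\bs\tau\|_{0,K}\lesssim\|\boldsymbol{\varepsilon}(\bs v)\|_{0,K}$, and then test against a bubble-weighted normal extension of $\bs\tau\bs n$ and integrate by parts, exactly as the paper does. The only repair needed is to use $b_F^2$ (as the paper does) rather than $b_F$ in the test function, so that $(\bs\tau\bs n,\bs\phi_F)_{F'}\eqsim\|\bs\tau\bs n\|_{0,F'}^2$ holds without worrying about the sign of $b_F$ on $F'$; the stray powers of $h_K$ in your intermediate inverse-estimate display should also be fixed, though the scaling you actually invoke in the final step is correct.
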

\begin{proof}
Let $\bs\tau=\boldsymbol{\varepsilon}(\bs v)+(s-Q_0^K(s+\frac{1}{d}\div\bs v))\bs I$ for simplicity, then
\[
\div\bs\tau=\div\boldsymbol{\varepsilon}(\bs v)+\nabla s\in \mathbb G_{k-2}^{\oplus}(K),\quad Q_0^K(\tr\bs\tau)=0.
\]
Employing \eqref{eq:tensorineqlty}, \eqref{eq:polyinverseEqual} and \eqref{eq:inverseeq1}, we get
\begin{align}
\|\bs\tau\|_{0,K}&\lesssim \|\dev\bs\tau\|_{0,K}+\|\div\bs\tau\|_{-1,K}=\|\dev(\boldsymbol{\varepsilon}(\bs v))\|_{0,K}+\|\div\bs\tau\|_{-1,K} \notag\\
&\lesssim \|\boldsymbol{\varepsilon}(\bs v)\|_{0,K}+h_K\|\div\boldsymbol{\varepsilon}(\bs v)+\nabla s\|_{0,K}\lesssim \|\boldsymbol{\varepsilon}(\bs v)\|_{0,K}.\label{eq:20190828-1}
\end{align}
Noting that $\bs\tau\bs n|_{F'}$ is a polynomial for each $F'\in \mathcal{F}_F(K)$, let
\[
E_F(\bs\tau\bs n):=\begin{cases}
    \bs\tau\bs n|_{F'} & \textrm{ in } F'\in\mathcal{F}_F(K), \\
    \bs 0 & \textrm{ in } \mathbb R_F^{d-1}\backslash\mathcal{F}_F(K),
    \end{cases}
\]
which is a piecewise polynomial defined on $\mathbb R_F^{d-1}$.
%we can regard $\bs\tau\bs n|_{F'}$ as the function on the $(d-1)$-dimensional affine space $\mathbb R_F^{d-1}$. 
Then we extend $E_F(\bs\tau\bs n)$ to $\mathbb R^d$. For any $\boldsymbol x\in \mathbb R^d$, let $\boldsymbol x_F^P$ be the projection of $\boldsymbol x$ on $\mathbb R_F^{d-1}$. Define
$$
E_K(\bs\tau\bs n)(\boldsymbol x):=(E_F(\bs\tau\bs n))(\boldsymbol x_F^P).
$$
Let $\mathbb R_{F'}^{d}:=\{\bs x\in\mathbb R^n:\boldsymbol x_F^P\in F'\}$, and $\bs\phi_{F}$ be a piecewise polynomial defined as
$$
\bs\phi_{F}(\bs x)=\begin{cases}
b_{F}^{2}E_K(\bs\tau\bs n), & \bs x\in\mathbb R_{F'}^{d}, F'\in\mathcal{F}_F(K), \\
\bs 0, & \bs x\in \mathbb R^{d}\backslash\bigcup\limits_{F'\in\mathcal{F}_F(K)}\mathbb R_{F'}^{d}.
\end{cases}
$$
Since $b_F$ vanishes on all $(d-2)$-dimensional faces of $K$, $\bs\phi_{F}(\bs x)$ is continuous in $\mathbb R^d$.
%it holds $\bs\phi_{F}(\bs x)\in \bs H^1(\mathbb R^d; \mathbb R^d)$.
And we have
\begin{equation}\label{eq:20181025-2}
\|\bs\phi_{F}\|_{0,K}\lesssim \sum_{F'\in\mathcal{F}_F(K)}h_{K}^{1/2}\|\bs\tau\bs n\|_{0,F'},
\quad
\|\bs\tau\bs n\|_{0,F'}^2\eqsim(\bs\tau\bs n, \bs\phi_{F})_{F'}.
\end{equation}
Thus we obtain from \eqref{eq:20190828-1}, the inverse inequality \eqref{eq:polyinverse} and \eqref{eq:inverseeq1} that
\begin{align*}
\sum_{F'\in\mathcal{F}_F(K)}\|\bs\tau\bs n\|_{0,F'}^2&\simeq \big(\bs\tau, \bs\varepsilon(\bs \phi_F)\big)_K +(\div\boldsymbol{\varepsilon}(\bs v)+\nabla s, \bs \phi_F)_K \\
&\lesssim \|\bs\tau\|_{0,K}\|\bs\varepsilon(\bs \phi_F)\|_{0,K}+\|\div\boldsymbol{\varepsilon}(\bs v)+\nabla s\|_{0,K}\|\bs\phi_F\|_{0,K}\\
&\lesssim h_K^{-1}\|\boldsymbol{\varepsilon}(\bs v)\|_{0,K}\|\bs\phi_F\|_{0,K},
\end{align*}
which combined with \eqref{eq:20181025-2} implies \eqref{eq:inverseeq2}.
\end{proof}

With previous preparations, now we can prove the norm equivalence of the stabilization on $\ker(\bs\Pi^K)\cap \bs V_k(K)$.
\begin{lemma}
The stabilization has the norm equivalence
\begin{equation}\label{eq:normequivalence}
S_K(\bs v, \bs v)\eqsim \|\bs\varepsilon(\bs v)\|_{0,K}^2\quad\forall~\bs v\in \ker(\bs\Pi^K)\cap \bs V_k(K).
\end{equation}
\end{lemma}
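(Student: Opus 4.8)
The plan is to prove the two inequalities separately. For the upper bound $S_K(\bs v,\bs v)\lesssim\|\bs\varepsilon(\bs v)\|_{0,K}^2$ I would use only that $\bs Q_{\mathbb G_{k-2}^{\oplus}}^K$ and the $\bs Q_{k-1}^F$ are $L^2$-orthogonal projectors, so that $\|\bs Q_{\mathbb G_{k-2}^{\oplus}}^K\bs v\|_{0,K}\le\|\bs v\|_{0,K}$ and $\|\bs Q_{k-1}^F\bs v\|_{0,F}\le\|\bs v\|_{0,F}$, and then invoke the Poincar\'e-type estimate \eqref{eq:poincare}, which is precisely valid on $\ker(\bs\Pi^K)$, together with $h_F\eqsim h_K$ from the mesh assumptions. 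This direction needs nothing about $\bs V_k(K)$; it is the reverse bound that carries the content.

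For $\|\bs\varepsilon(\bs v)\|_{0,K}^2\lesssim S_K(\bs v,\bs v)$, I would first record a simplification: on $\ker(\bs\Pi^K)\cap\bs V_k(K)$ one has $\div\bs v=\bs 0$, since \eqref{eq:H1projlocal2} gives $Q_{k-1}^K(\div\bs v)=\div(\bs\Pi^K\bs v)=\bs0$ while $\div\bs v\in\mathbb P_{k-1}(K)$ by the definition of $\bs V_k(K)$. Next, choosing $s\in L^2(K)$ with $\div\boldsymbol{\varepsilon}(\bs v)+\nabla s\in\mathbb G_{k-2}^{\oplus}(K)$ (such $s$ exists by the definition of $\bs V_k(K)$), I would set $\bs\tau:=\boldsymbol{\varepsilon}(\bs v)+(s-Q_0^Ks)\bs I$, a symmetric tensor with $\div\bs\tau=\div\boldsymbol{\varepsilon}(\bs v)+\nabla s\in\mathbb G_{k-2}^{\oplus}(K)$ and, on each $F\in\mathcal F(K)$, $\bs\tau\bs n|_F=(\boldsymbol{\varepsilon}(\bs v)\bs n+s\bs n)|_F-(Q_0^Ks)\bs n\in\mathbb P_{k-1}(F;\mathbb R^d)$ (using that $(\boldsymbol{\varepsilon}(\bs v)\bs n+s\bs n)|_F\in\mathbb P_{k-1}(F;\mathbb R^d)$ and that $\bs n$ is constant on the flat face $F$). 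Since $\div\bs v=\bs 0$, the Green identity \eqref{eq:StokesGreen1} with $\bs u=\bs v$, pressure $s-Q_0^Ks$, and test function $\bs v$ collapses to
\[
\|\boldsymbol{\varepsilon}(\bs v)\|_{0,K}^2=(\bs\tau,\boldsymbol{\varepsilon}(\bs v))_K=-(\div\bs\tau,\bs v)_K+(\bs\tau\bs n,\bs v)_{\partial K}.
\]

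The key point is that the right-hand side sees $\bs v$ only through the pieces measured by $S_K$. Indeed, $\div\bs\tau\in\mathbb G_{k-2}^{\oplus}(K)$ lets me replace $\bs v$ by $\bs Q_{\mathbb G_{k-2}^{\oplus}}^K\bs v$ in the volume term, and \eqref{eq:inverseeq1} gives $\|\div\bs\tau\|_{0,K}\lesssim h_K^{-1}\|\boldsymbol{\varepsilon}(\bs v)\|_{0,K}$, so $(\div\bs\tau,\bs v)_K\lesssim\|\boldsymbol{\varepsilon}(\bs v)\|_{0,K}\,h_K^{-1}\|\bs Q_{\mathbb G_{k-2}^{\oplus}}^K\bs v\|_{0,K}$; while $\bs\tau\bs n|_F\in\mathbb P_{k-1}(F;\mathbb R^d)$ lets me replace $\bs v$ by $\bs Q_{k-1}^F\bs v$ in the boundary term, where Lemma~\ref{lem:inverseeq2} (with $\div\bs v=\bs 0$, so that the shifted pressure there equals $s-Q_0^Ks$) gives $h_K^{1/2}\|\bs\tau\bs n\|_{0,F}\lesssim\|\boldsymbol{\varepsilon}(\bs v)\|_{0,K}$. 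Summing over the finitely many faces with the Cauchy--Schwarz inequality and $h_F\eqsim h_K$ yields $\|\boldsymbol{\varepsilon}(\bs v)\|_{0,K}^2\lesssim\|\boldsymbol{\varepsilon}(\bs v)\|_{0,K}\,S_K(\bs v,\bs v)^{1/2}$, and dividing by $\|\boldsymbol{\varepsilon}(\bs v)\|_{0,K}$ (the trivial case $\boldsymbol{\varepsilon}(\bs v)=\bs 0$ aside) and squaring finishes the proof.

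I do not expect a genuine obstacle here: the two hard ingredients — \eqref{eq:inverseeq1} controlling the interior residual $\div\boldsymbol{\varepsilon}(\bs v)+\nabla s$ and, above all, Lemma~\ref{lem:inverseeq2} controlling the boundary traction $\bs\tau\bs n$ through the bubble-function construction — are already established. The only thing requiring care is the bookkeeping: observing that $\div\bs v=\bs 0$ on the kernel, and choosing the shifted pressure $s-Q_0^Ks$ so that $\div\bs\tau$ lands exactly in $\mathbb G_{k-2}^{\oplus}(K)$ and $\bs\tau\bs n$ exactly in $\mathbb P_{k-1}(F;\mathbb R^d)$ — that is, in the ranges of the projectors that define $S_K$ — so that the equivalence can simply be read off from the integration-by-parts identity.
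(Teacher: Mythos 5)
Your proposal is correct and follows essentially the same route as the paper: both directions use \eqref{eq:poincare} for the upper bound and, for the lower bound, the same tensor $\bs\tau=\boldsymbol{\varepsilon}(\bs v)+(s-Q_0^Ks)\bs I$ (which coincides with the paper's $\bs\tau$ once $\div\bs v=0$ is noted), integration by parts, and the two key estimates \eqref{eq:inverseeq1} and Lemma~\ref{lem:inverseeq2}. No gaps.
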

\begin{proof}
Let $\bs\tau$ be defined as in the proof of Lemma~\ref{lem:inverseeq2}.
Since $\div\bs v=0$ by \eqref{eq:PiKdiv}, %it follows
% \[
% \big((s-Q_0^K(s+\frac{1}{d}\div\bs v))\bs I, \bs\varepsilon(\bs v)\big)_K=0.
% \]
% Then we get from \eqref{eq:inverseeq1}
we get from \eqref{eq:inverseeq1} that
\begin{align*}
\|\bs\varepsilon(\bs v)\|_{0,K}^2&=\big(\bs\tau, \bs\varepsilon(\bs v)\big)_K =-(\div\bs\tau, \bs v)_K  + \sum_{F\in \mathcal F(K)}(\bs\tau\bs n, \bs v)_F \\
&\leq \|\div\bs\tau\|_{0,K}\big\|\bs Q_{\mathbb G_{k-2}^{\oplus}}^K\bs v\big\|_{0,K} + \sum_{F\in \mathcal F(K)}\big\|\bs \tau\bs n\big\|_{0,F}\|\bs Q_{k-1}^F\bs v\|_{0,F} \\
&\leq h_{K}^{-1}\big\|\bs Q_{\mathbb G_{k-2}^{\oplus}}^K\bs v\big\|_{0,K}\|\bs\varepsilon(\bs v)\|_{0,K} + \sum_{F\in \mathcal F(K)}\big\|\bs \tau\bs n\big\|_{0,F}\|\bs Q_{k-1}^F\bs v\|_{0,F},
\end{align*}
which together with \eqref{eq:inverseeq2} implies
$
\|\bs\varepsilon(\bs v)\|_{0,K}^2\lesssim S_K(\bs v, \bs v).
$

On the other hand, by the trace inequality~\eqref{L2trace} and \eqref{eq:poincare},
\begin{align*}
S_K(\bs v, \bs v)&=h_K^{-2}\big\|\bs Q_{\mathbb G_{k-2}^{\oplus}}^K\bs v\big\|_{0,K}^2+\sum_{F\in\mathcal F(K)}h_F^{-1}\|\bs Q_{k-1}^F\bs v\|_{0,F}^2 \\
&\lesssim h_K^{-2}\|\bs v\|_{0,K}^2+\sum_{F\in\mathcal F(K)}h_F^{-1}\|\bs v\|_{0,F}^2\lesssim h_K^{-2}\|\bs v\|_{0,K}^2+|\bs v|_{1,K}^2\lesssim \|\bs\varepsilon(\bs v)\|_{0,K}^2,
\end{align*}
which ends the proof.
\end{proof}

Thanks to \eqref{eq:H1projbound}, apparently we have
\[
\|\bs\varepsilon(\bs v)\|_{0,K}^2\eqsim\|\bs Q_{k-1}^K\bs\varepsilon(\bs v)\|_{0,K}^2+ \|\bs\varepsilon(\bs v-\bs \Pi^K \bs v)\|_{0,K}^2\quad \forall~\bs v\in \bs V_k(K),
\]
which combined with \eqref{eq:normequivalence} implies the norm equivalence
\begin{equation}\label{eq:normequivalenceH1}
a_h^K(\bs v, \bs v)\eqsim\|\bs\varepsilon(\bs v)\|_{0,K}^2\quad \forall~\bs v\in \bs V_k(K). 
\end{equation}

% \begin{proposition}
% It holds for any $\bs v\in \bs V_k(K)$
% \begin{equation}\label{eq:normequivalenceH1}
% a_h^K(\bs v, \bs v)\eqsim\|\bs\varepsilon(\bs v)\|_{0,K}^2. %\eqsim  \|\bs Q_{k-1}^K\bs\varepsilon(\bs v)\|_{0,K}^2+ S_K(\bs v-\bs \Pi^K \bs v, \bs v-\bs \Pi^K \bs v).
% \end{equation}
% \end{proposition}
% \begin{proof}
% Thanks to \eqref{eq:H1projbound}, it follows
% \[
% \|\bs Q_{k-1}^K\bs\varepsilon(\bs v)\|_{0,K}^2+ \|\bs\varepsilon(\bs v-\bs \Pi^K \bs v)\|_{0,K}^2\lesssim \|\bs\varepsilon(\bs v)\|_{0,K}^2.
% \]
% On the other side, we also have
% \begin{align*}
% \|\bs\varepsilon(\bs v)\|_{0,K}^2&=\|\bs Q_{k-1}^K\bs\varepsilon(\bs v)\|_{0,K}^2+ \|(I-\bs Q_{k-1}^K)\bs\varepsilon(\bs v)\|_{0,K}^2\\
% &=\|\bs Q_{k-1}^K\bs\varepsilon(\bs v)\|_{0,K}^2+ \|(I-\bs Q_{k-1}^K)\bs\varepsilon(\bs v-\bs \Pi^K \bs v)\|_{0,K}^2\\
% &\leq\|\bs Q_{k-1}^K\bs\varepsilon(\bs v)\|_{0,K}^2+ \|\bs\varepsilon(\bs v-\bs \Pi^K \bs v)\|_{0,K}^2.
% \end{align*}
% Thus
% \[
% \|\bs\varepsilon(\bs v)\|_{0,K}^2\eqsim\|\bs Q_{k-1}^K\bs\varepsilon(\bs v)\|_{0,K}^2+ \|\bs\varepsilon(\bs v-\bs \Pi^K \bs v)\|_{0,K}^2.
% \]
% Finally we derive the norm equivalence \eqref{eq:normequivalenceH1} from \eqref{eq:normequivalence}.
% \end{proof}

\subsection{Interpolation operator}

Let $\bs I_K: \bs H^1(K;\mathbb R^d)\to \bs V_k(K)$ be the canonical interpolation operator based on the degrees of freedom~\eqref{dof1}-\eqref{dof2}.
Since all the values of the degrees of freedom~\eqref{dof1}-\eqref{dof2} of $\bs v-\bs I_K\bs v$ vanish,
we have for any $\bs v\in \bs H^1(K;\mathbb R^d)$
\begin{align}\label{eq:20181012-1}
\bs\Pi^K(\bs v-\bs I_K\bs v)&=\bs0,
\\
\label{eq:20190829-2}
\div(\bs I_K\bs v)&=Q_{k-1}^K(\div\bs v).
\end{align}
Then adopting the argument in \cite[Lemma 5.1]{ChenHuang2020}, we get the Galerkin orthogonality
\begin{equation}\label{eq:20181012-2}
a_h^K(\bs v-\bs I_K\bs v, \bs w)=0\quad\forall~\bs v, \bs w \in \bs H^1(K;\mathbb R^d).
\end{equation}
% \begin{lemma}\label{lem:20190605}
% It holds the Galerkin orthogonality
% \begin{equation}\label{eq:20181012-2}
% a_h^K(\bs v-\bs I_K\bs v, \bs w)=0\quad\forall~\bs v\in \bs H^1(K;\mathbb R^d),\; \bs w \in \bs H^1(K;\mathbb R^d).
% \end{equation}
% \end{lemma}
% \begin{proof}
% Using the integration by parts and the definition of $\bs I_K$, we get
% \[
% \bs Q_{k-1}^K\bs\varepsilon(\bs v-\bs I_{K}\bs v)=\bs 0.
% \]
% By the definition of $\bs I_K$ again, we also have
% \[
% S_{K}(\bs v-\bs I_{K}\bs v, \bs w-\bs\Pi^{K}\bs w)=0.
% \]
% Then it follows from \eqref{eq:20181012-1} that
% \[
% a_h^K(\bs v-\bs I_K\bs v, \bs w)=(\bs Q_{k-1}^K\bs\varepsilon(\bs v-\bs I_{K}\bs v), \bs Q_{k-1}^K\bs\varepsilon(\bs w))_K+S_{K}(\bs v-\bs I_{K}\bs v, \bs w-\bs\Pi^{K}\bs w)=0.
% \]
% Therefore the Galerkin orthogonality \eqref{eq:20181012-2} is true.
% \end{proof}

% Applying the similar argument in \cite[Lemma 5.3]{ChenHuang2020}, we acquire from the norm equivalence~\eqref{eq:normequivalenceH1}, the Galerkin orthogonality~\eqref{eq:20181012-2}, \eqref{eq:poincare} and \eqref{eq:20181012-1} that
% \begin{equation}\label{eq:IKerror}
% \|\bs v-\bs I_{K}\bs v\|_{0,K}+h_K|\bs v-\bs I_{K}\bs v|_{1,K}\lesssim h_K^{s}|\bs v|_{s, K}\quad\forall~\bs v\in \bs H^{s}(K;\mathbb R^d)
% \end{equation}
% with positive integer $s\leq k+1$.

Now we present the interpolation error estimate by the aid of the Galerkin orthogonality~\eqref{eq:20181012-2}.
\begin{proposition}\label{lem:IKerror}
For any $\bs v\in \bs H^{s}(K;\mathbb R^d)$ with positive integer $s\leq k+1$, we have
\begin{equation}\label{eq:IKerror}
\|\bs v-\bs I_{K}\bs v\|_{0,K}+h_K|\bs v-\bs I_{K}\bs v|_{1,K}\lesssim h_K^{s}|\bs v|_{s, K}.
\end{equation}
\end{proposition}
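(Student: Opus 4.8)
The plan is to use the standard virtual-element recipe: derive the $\bs\varepsilon$-stability of $\bs I_K$ from the Galerkin orthogonality \eqref{eq:20181012-2} and the norm equivalence \eqref{eq:normequivalenceH1}, combine it with polynomial approximation, and then upgrade the resulting control of $\|\bs\varepsilon(\bs v-\bs I_K\bs v)\|_{0,K}$ to the full estimate through the Poincar\'e-type inequality \eqref{eq:poincare} on $\ker(\bs\Pi^K)$.

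First I would record two facts that reduce the problem. Since the degrees of freedom \eqref{dof1}-\eqref{dof2} are unisolvent on $\bs V_k(K)$ and $\mathbb P_k(K;\mathbb R^d)\subseteq\bs V_k(K)$, the interpolation $\bs I_K$ reproduces every polynomial of degree $\le k$; and by \eqref{eq:20181012-1} the interpolation error satisfies $\bs\Pi^K(\bs v-\bs I_K\bs v)=\bs0$, i.e. $\bs v-\bs I_K\bs v\in\ker(\bs\Pi^K)$, so that \eqref{eq:poincare} applies to it. Hence it suffices to prove $\|\bs\varepsilon(\bs v-\bs I_K\bs v)\|_{0,K}\lesssim h_K^{s-1}|\bs v|_{s,K}$.

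For the $\bs\varepsilon$-stability of $\bs I_K$, let $\bs w\in\bs H^1(K;\mathbb R^d)$. Applying the lower bound of \eqref{eq:normequivalenceH1} to $\bs I_K\bs w\in\bs V_k(K)$, then \eqref{eq:20181012-2} in the form $a_h^K(\bs I_K\bs w,\bs I_K\bs w)=a_h^K(\bs w,\bs I_K\bs w)$, and finally the boundedness \eqref{eq:20190829-3}, yields $\|\bs\varepsilon(\bs I_K\bs w)\|_{0,K}^2\lesssim a_h^K(\bs w,\bs I_K\bs w)\lesssim\|\bs\varepsilon(\bs w)\|_{0,K}\,\|\bs\varepsilon(\bs I_K\bs w)\|_{0,K}$, hence $\|\bs\varepsilon(\bs I_K\bs w)\|_{0,K}\lesssim\|\bs\varepsilon(\bs w)\|_{0,K}$. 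Now for any $\bs q\in\mathbb P_k(K;\mathbb R^d)$ one has $\bs v-\bs I_K\bs v=(\bs v-\bs q)-\bs I_K(\bs v-\bs q)$, so the triangle inequality and the stability just proved give $\|\bs\varepsilon(\bs v-\bs I_K\bs v)\|_{0,K}\lesssim\|\bs\varepsilon(\bs v-\bs q)\|_{0,K}\le|\bs v-\bs q|_{1,K}$. Minimizing over $\bs q$ by a Bramble--Hilbert argument (averaged Taylor polynomial on the star-shaped element $K$, assumption (A1)) gives $\|\bs\varepsilon(\bs v-\bs I_K\bs v)\|_{0,K}\lesssim h_K^{s-1}|\bs v|_{s,K}$ for $1\le s\le k+1$. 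Inserting this into \eqref{eq:poincare} finally produces $\|\bs v-\bs I_K\bs v\|_{0,K}+h_K|\bs v-\bs I_K\bs v|_{1,K}\lesssim h_K\|\bs\varepsilon(\bs v-\bs I_K\bs v)\|_{0,K}\lesssim h_K^{s}|\bs v|_{s,K}$, which is \eqref{eq:IKerror}.

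The one delicate point is the $\bs\varepsilon$-stability step: $\bs I_K$ is not a priori a bounded operator on $\bs H^1(K;\mathbb R^d)$, and the argument works only because $\bs I_K\bs w$ and $\bs w$ carry the same degrees of freedom \eqref{dof1}-\eqref{dof2} and the same $\bs Q_{k-1}^K\bs\varepsilon(\cdot)$ --- which is precisely why \eqref{eq:20181012-2} holds for \emph{every} test function --- together with the spectral equivalence of the stabilized form $a_h^K$ with $\|\bs\varepsilon(\cdot)\|_{0,K}^2$ on $\bs V_k(K)$ established in \eqref{eq:normequivalenceH1}. The remaining pieces (the Bramble--Hilbert estimate and the single use of \eqref{eq:poincare}) are routine.
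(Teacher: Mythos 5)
Your proposal is correct and follows essentially the same route as the paper: Galerkin orthogonality \eqref{eq:20181012-2} plus the norm equivalence \eqref{eq:normequivalenceH1} and continuity \eqref{eq:20190829-3} to control the $\bs\varepsilon$-seminorm of the error by the best polynomial approximation, then Bramble--Hilbert, then \eqref{eq:poincare} via \eqref{eq:20181012-1}. The only cosmetic difference is that you package the middle step as $\bs\varepsilon$-stability of $\bs I_K$ combined with polynomial reproduction, whereas the paper bounds $\|\bs\varepsilon(\bs q-\bs I_K\bs v)\|_{0,K}$ directly; the two are the same argument rearranged.
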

\begin{proof}
Take any $\bs q\in\mathbb P_k(K; \mathbb R^d)$. We obtain from \eqref{eq:normequivalenceH1}, \eqref{eq:20181012-2} with $\bs w=\bs q-\bs I_{K}\bs v$ and \eqref{eq:20190829-3} that
\begin{align*}
\|\bs\varepsilon(\bs q-\bs I_{K}\bs v)\|_{0,K}^2&\lesssim a_h^K(\bs q-\bs I_{K}\bs v, \bs q-\bs I_{K}\bs v)=a_h^K(\bs q-\bs v, \bs q-\bs I_{K}\bs v)\\
&\lesssim \|\bs\varepsilon(\bs v-\bs q)\|_{0,K}\|\bs\varepsilon(\bs q-\bs I_{K}\bs v)\|_{0,K}.
\end{align*}
Thus
\[
\|\bs\varepsilon(\bs q-\bs I_{K}\bs v)\|_{0,K}\lesssim \|\bs\varepsilon(\bs v-\bs q)\|_{0,K},
\]
and then
\[
\|\bs\varepsilon(\bs v-\bs I_{K}\bs v)\|_{0,K}\leq \|\bs\varepsilon(\bs v-\bs q)\|_{0,K}+\|\bs\varepsilon(\bs q-\bs I_{K}\bs v)\|_{0,K}\lesssim \|\bs\varepsilon(\bs v-\bs q)\|_{0,K}.
\]
By the Bramble-Hilbert Lemma \cite[Lemma~4.3.8]{BrennerScott2008}, we get
\begin{equation*}%\label{eq:20190906-1}
\|\bs\varepsilon(\bs v-\bs I_{K}\bs v)\|_{0,K}\lesssim \inf_{\bs q\in\mathbb P_k(K; \mathbb R^d)}\|\bs\varepsilon(\bs v-\bs q)\|_{0,K}\lesssim h_K^{s-1}|\bs v|_{s, K}.
\end{equation*}
Finally we conclude \eqref{eq:IKerror} from \eqref{eq:poincare} and \eqref{eq:20181012-1}.
\end{proof}

%Due to the proof of Lemma~\ref{lem:IKerror}, we can also get
%\begin{equation}\label{eq:IKboundH1}
%\|\bs\varepsilon(\bs I_{K}\bs v)\|_{0,K}\lesssim \|\bs\varepsilon(\bs v)\|_{0,K}\quad \forall~\bs v\in \bs H^1(K;\mathbb R^d).
%\end{equation}

\section{Divergence-Free Nonconforming Virtual Element Methods}

We will present the divergence-free nonconforming virtual element methods for the Stokes problem~\eqref{eq:stokes} in this section.
The variational formulation of the Stokes problem~\eqref{eq:stokes} is to find $\bs u\in \bs H_0^1(\Omega;\mathbb R^d)$ and $p\in L_0^2(\Omega)$ such that
\begin{align}
\nu(\boldsymbol{\varepsilon}(\bs u), \boldsymbol{\varepsilon}(\bs v)) + (\div\bs v,p)& = (\bs f, \bs v) \qquad \forall~\bs v\in \bs H_0^1(\Omega;\mathbb R^d),\label{eq:stokesvariational1}\\
(\div\bs u, q)&=0 \qquad\qquad \forall~q\in L_0^2(\Omega). \label{eq:stokesvariational2}
\end{align}

\subsection{Discretization}
Define the global virtual element space for the velocity as
\begin{align*}
\bs V_h:=\{\bs v_h\in \bs L^2(\Omega;\mathbb R^d): &\, \bs v_h|_K\in \bs V_k(K)\textrm{ for each } K\in\mathcal T_h; \; \bs Q_{k-1}^F\bs v_h \textrm{ is continuous } \\
&\quad\quad \textrm{through } F \textrm{ for all } F\in\mathcal F_h^{1};\;\bs Q_{k-1}^F\bs v_h=\bs 0 \; \textrm{ if } F\subset\partial\Omega \}.
\end{align*}
And the discrete space for the pressure is given by
\[
\mathcal Q_h=\{q_h\in L_0^2(\Omega): q_h|_K\in \mathbb P_{k-1}(K)\textrm{ for each } K\in\mathcal T_h\}.
\]
%For any $\bs v_h\in\bs V_h$, equip with the squared mesh-dependent norm
%\begin{align*}
%\interleave\bs v_h\interleave_{\nu,h}^2:= & \sum_{K\in\mathcal T_h}\big(\nu\|\bs\varepsilon(\bs v_h)\|_{0,K}^2+\|\bs v_h\|_{0,K}^2 + h_K^2S_K(\bs v_h-\bs \Pi^K \bs v_h, \bs v_h-\bs \Pi^K \bs v_h)\big) \\
%&+\sum_{K\in\mathcal T_h}\|\div\bs v_h\|_{0,K}^2.
%\end{align*}
Since we use the symmetric gradient in the Stokes problem \eqref{eq:stokesvariational1}-\eqref{eq:stokesvariational2} and the discrete Korn's inequality does not hold for $\bs V_h$ when $k=1$ \cite{Brenner2004a},
hereafter we always assume integer $k\geq2$. We refer to \cite{HansboLarson2003} for overcoming the failure of the discrete Korn's inequality for the case $k=1$ by adding a jump penalization.

By the definition of $\bs V_h$, we have
\begin{equation*}%\label{eq:weakcontinuity}
\bs Q_{k-1}^F(\llbracket\bs v_h\rrbracket)=\bs 0\quad\forall~\bs v_h\in\bs V_h, \, F\in\mathcal F_h^1.
\end{equation*}
Thanks to (3.6) in \cite{ChenHuHuang2018}, it follows
\[
|\bs v_h|_{1,h}\lesssim \|\bs\varepsilon_h(\bs v_h)\|_{0} \quad\forall~\bs v_h\in\bs V_h.
\]
Then
similarly as Lemma 4.6 and Lemma 4.8 in \cite{ChenHuang2020}, we get
for any $\bs v_h\in \bs V_h$ that %and each $F\in\mathcal F_h^{1}$ %
\begin{equation*}%\label{eq:weakcontinuityestimate}
\sum_{F\in\mathcal F_h^{1}}h_F^{-1}\big\|\llbracket \bs v_h\rrbracket\big\|_{0,F}^2
\lesssim \|\bs\varepsilon_h(\bs v_h)\|_{0}^2,
\end{equation*}
and the discrete Poincar\'e inequality
\begin{equation}\label{eq:poincareinequality}
\|\bs v_h\|_{1,h}\lesssim |\bs v_h|_{1,h}\lesssim \|\bs\varepsilon_h(\bs v_h)\|_{0}.
\end{equation}
%And moreover for any $\bs v_h\in\bs V_h$, there exists $\bs w\in\bs H_0^1(\Omega;\mathbb R^d)$ such that (cf. \cite[Lemma~4.7]{ChenHuang2020})
%\begin{equation}\label{eq:connectionerror}
%\|\bs v_h-\bs w\|_{0}+h |\bs v_h-\bs w|_{1,h}\lesssim h|\bs v_h|_{1,h}.
%\end{equation}

Let $Q_h^l: L^2(\Omega)\to \mathbb P_{l}(\mathcal T_h)$ be the $L^2$-orthogonal projector onto $\mathbb P_{l}(\mathcal T_h)$: for any $v\in L^2(\Omega)$,
\[
(Q_h^lv)|_K:= Q_{l}^{K}(v|_K)\quad\forall~K\in\mathcal T_h.
\]
The vectorial or tensorial version of $Q_h^l$ is denoted by $\bs Q_h^l$. And define $\bs\Pi_h$ as the global version of $\bs\Pi^K$ similarly.

The divergence-free nonconforming virtual element method based on the variational formulation~\eqref{eq:stokesvariational1}-\eqref{eq:stokesvariational2} for the Stokes problem~\eqref{eq:stokes} is to find $\bs u_h\in \bs V_h$ and $p_h\in \mathcal Q_h$ such that
\begin{align}
\nu a_h(\bs u_h, \bs v_h) + b_h(\bs v_h, p_h)& = \langle\bs f, \bs v_h\rangle \qquad \forall~\bs v_h\in \bs V_h,\label{eq:mixedvem1}\\
b_h(\bs u_h, q_h)&=0 \qquad\qquad\;\; \forall~q_h\in \mathcal Q_h, \label{eq:mixedvem2}
\end{align}
where
\[
a_h(\bs u_h, \bs v_h):=\sum_{K\in\mathcal T_h}a_h^K(\bs u_h, \bs v_h),
\quad
b_h(\bs v_h, p_h):=(\div_h\bs v_h, p_h),
\]
\[
\langle\bs f, \bs v_h\rangle:=\begin{cases}
(\bs f, \bs\Pi_h\bs v_h),  &  k=2,\\
(\bs f, \bs Q_h^{k-2}\bs v_h),  &  k\geq3.
\end{cases}
\]
% \[
% \langle\bs f, \bs v_h\rangle:=\sum_{K\in\mathcal T_h}\langle\bs f, \bs v_h\rangle_K,\quad
% \langle\bs f, \bs v_h\rangle_K:=\begin{cases}
% (\bs f, \bs\Pi^K(\bs v_h|_K))_K,  &  k=2,\\
% (\bs f, \bs Q_{k-2}^K(\bs v_h|_K))_K,  &  k\geq3.
% \end{cases}
% \]
Obviously we have from \eqref{eq:20190829-3} that %and \eqref{eq:normequivalenceH1}
\begin{align*}%\label{eq:20190830-3}
a_h(\bs w, \bs v)&\lesssim \|\bs\varepsilon_h(\bs w)\|_0\|\bs\varepsilon_h(\bs v)\|_0\quad\,\forall~\bs w, \bs v\in \boldsymbol{H}^1(\mathcal T_h; \mathbb{R}^d),
\\%\label{eq:20190830-4}
b_h(\bs v, p)&\lesssim \|\bs\varepsilon_h(\bs v)\|_0\|p\|_0 \qquad\quad\; \forall~\bs v\in \boldsymbol{H}^1(\mathcal T_h; \mathbb{R}^d), \, p\in L^2(\Omega).%\notag
\end{align*}

\subsection{Inf-sup conditions}\label{section:discreteinf-sup}
To show the well-posedness of the nonconforming virtual element method~\eqref{eq:mixedvem1}-\eqref{eq:mixedvem2}, we derive some stability results.

Denote by  $\bs I_h: \bs H_0^1(\Omega;\mathbb R^d)\to \bs V_h$ the global canonical interpolation operator based on the degrees of freedom~\eqref{dof1}-\eqref{dof2}, i.e., $(\bs I_h\bs v)|_K:=\bs I_K(\bs v|_K)$ for any $\bs v\in \bs H_0^1(\Omega;\mathbb R^d)$ and $K\in\mathcal T_h$.
Due to \eqref{eq:babuska-aziz}, \eqref{eq:20190829-2} and \eqref{eq:IKerror}, we have
$\div_h\bs V_h=\mathcal Q_h$ and the inf-sup condition (cf. \cite[Section 5.4.3]{BoffiBrezziFortin2013})
\begin{equation}\label{eq:infsup1}
\| q_h \|_{0}\lesssim \sup_{\bs v_h\in\bs V_h}\frac{b_h(\bs v_h, q_h)}{\|\bs v_h\|_{1,h}}\quad\forall~q_h\in \mathcal Q_h.
\end{equation}

% \begin{lemma}\label{lem:infsup1}
% We have $\div_h\bs V_h=\mathcal Q_h$ and the inf-sup condition
% \begin{equation}\label{eq:infsup1}
% \| q_h \|_{0}\lesssim \sup_{\bs v_h\in\bs V_h}\frac{b_h(\bs v_h, q_h)}{\|\bs v_h\|_{1,h}}\quad\forall~q_h\in \mathcal Q_h.
% \end{equation}
% \end{lemma}
% \begin{proof}
% %Due to the right inverse of the divergence operator and the Babu\v{s}ka-Aziz inequality~\eqref{eq:babuska-aziz},
% Due to \eqref{eq:babuska-aziz},
% there exist $\bs w\in \bs H_0^1(\Omega;\mathbb R^d)$
% such that
% \[
% \div\bs w=q_h,\quad \|\bs w\|_{1}\lesssim \|q_h\|_{0}.
% \]
% It follows from \eqref{eq:20190829-2}
% \[
% \div_h(\bs I_h\bs w)=Q_h^{k-1}\div\bs w=q_h.
% \]
% By using \eqref{eq:IKerror}, we get
% \[
% \|\bs I_h\bs w\|_{1,h}\leq \|\bs w-\bs I_h\bs w\|_{1,h}+\|\bs w\|_{1}\lesssim\|\bs w\|_{1}\lesssim \|q_h\|_{0}.
% \]
% This ends the proof.
% \end{proof}

\begin{lemma}\label{lem:stability}
We have the inf-sup condition
\begin{align}
&\;\nu^{1/2}\|\bs\varepsilon_h(\widetilde{\bs u}_h)\|_{0}+\nu^{-1/2}\|\widetilde{p}_h \|_{0}  \notag\\
\lesssim &\sup_{\bs v_h\in\bs V_h, q_h\in\mathcal Q_h}\frac{\nu a_h(\widetilde{\bs u}_h, \bs v_h) + b_h(\bs v_h, \widetilde{p}_h)+b_h(\widetilde{\bs u}_h, q_h)}{\nu^{1/2}\|\bs\varepsilon_h(\bs v_h)\|_{0}+\nu^{-1/2}\|q_h \|_{0}} \label{eq:infsup2}
\end{align}
for any $\widetilde{\bs u}_h\in\bs V_h$ and $\widetilde{p}_h\in\mathcal Q_h$.
\end{lemma}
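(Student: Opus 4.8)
The plan is to establish the combined inf-sup condition \eqref{eq:infsup2} by the standard Babu\v{s}ka--Brezzi argument adapted to the $\nu$-weighted norms, exploiting the coercivity of $a_h$ on the whole space $\bs V_h$ (not merely on a kernel) together with the inf-sup condition \eqref{eq:infsup1} for $b_h$. First I would fix $\widetilde{\bs u}_h\in\bs V_h$ and $\widetilde p_h\in\mathcal Q_h$ and denote the supremum on the right-hand side of \eqref{eq:infsup2} by $\mathcal S$. Testing with $\bs v_h=\widetilde{\bs u}_h$ and $q_h=-\widetilde p_h$ kills the off-diagonal terms and, using the norm equivalence \eqref{eq:normequivalenceH1} summed over $K$ (i.e. $a_h(\widetilde{\bs u}_h,\widetilde{\bs u}_h)\eqsim\|\bs\varepsilon_h(\widetilde{\bs u}_h)\|_0^2$), yields
\[
\nu\|\bs\varepsilon_h(\widetilde{\bs u}_h)\|_0^2\lesssim \nu a_h(\widetilde{\bs u}_h,\widetilde{\bs u}_h)=\nu a_h(\widetilde{\bs u}_h,\widetilde{\bs u}_h)+b_h(\widetilde{\bs u}_h,\widetilde p_h)-b_h(\widetilde{\bs u}_h,\widetilde p_h)\le \mathcal S\bigl(\nu^{1/2}\|\bs\varepsilon_h(\widetilde{\bs u}_h)\|_0\bigr),
\]
so $\nu^{1/2}\|\bs\varepsilon_h(\widetilde{\bs u}_h)\|_0\lesssim\mathcal S$.

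Next I would control the pressure. By the inf-sup condition \eqref{eq:infsup1} together with the discrete Poincar\'e inequality \eqref{eq:poincareinequality}, there is $\bs w_h\in\bs V_h$ with $\|\bs\varepsilon_h(\bs w_h)\|_0\lesssim\|\bs w_h\|_{1,h}$ normalized so that $b_h(\bs w_h,\widetilde p_h)\gtrsim\|\widetilde p_h\|_0^2$ and $\|\bs\varepsilon_h(\bs w_h)\|_0\lesssim\|\widetilde p_h\|_0$. Testing \eqref{eq:infsup2}'s numerator with $\bs v_h=\nu^{-1}\bs w_h$ and $q_h=0$ gives
\[
\nu^{-1}\|\widetilde p_h\|_0^2\lesssim \nu^{-1}b_h(\bs w_h,\widetilde p_h)=\nu^{-1}\bigl(\nu a_h(\widetilde{\bs u}_h,\bs w_h)+b_h(\bs w_h,\widetilde p_h)\bigr)-a_h(\widetilde{\bs u}_h,\bs w_h),
\]
and bounding the last term by the continuity of $a_h$ and the already-established estimate for $\widetilde{\bs u}_h$ gives $\nu^{-1}\|\widetilde p_h\|_0^2\lesssim \mathcal S\cdot\nu^{-1/2}\|\widetilde p_h\|_0 + \nu^{-1/2}\mathcal S\cdot\nu^{-1/2}\|\widetilde p_h\|_0$, hence $\nu^{-1/2}\|\widetilde p_h\|_0\lesssim\mathcal S$. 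Adding the two bounds yields \eqref{eq:infsup2}.

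The only delicate point is bookkeeping the powers of $\nu$ so that all hidden constants remain $\nu$-independent; concretely, one must rescale the test function $\bs w_h$ used to probe the pressure by exactly $\nu^{-1}$ (not $\nu^{-1/2}$) so that the term $\nu^{-1}b_h(\bs w_h,\widetilde p_h)$ appears with the right weight in the numerator of $\mathcal S$, while simultaneously ensuring $\nu^{1/2}\|\bs\varepsilon_h(\nu^{-1}\bs w_h)\|_0=\nu^{-1/2}\|\bs\varepsilon_h(\bs w_h)\|_0\lesssim\nu^{-1/2}\|\widetilde p_h\|_0$ controls the denominator. I expect no genuine obstacle beyond this scaling accounting: the coercivity of $a_h$ on all of $\bs V_h$ (a consequence of the norm equivalence, which is stronger than the usual VEM situation where coercivity holds only on a kernel) makes the argument essentially the textbook proof for a stable saddle-point pair, and I would cite \cite[Section 5.4.3]{BoffiBrezziFortin2013} or \cite{BrennerSung2018} for the template.
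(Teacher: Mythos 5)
Your proposal follows essentially the same route as the paper: coercivity of $a_h$ on all of $\bs V_h$ via the norm equivalence \eqref{eq:normequivalenceH1}, the $\nu$-scaled inf-sup condition for $b_h$ obtained from \eqref{eq:infsup1}, and then the standard Babu\v{s}ka--Brezzi saddle-point theory (the paper simply cites that theory instead of unfolding the test functions). The one slip is in your first display: with the test pair $(\widetilde{\bs u}_h,-\widetilde p_h)$ the denominator of the quotient defining $\mathcal S$ is $\nu^{1/2}\|\bs\varepsilon_h(\widetilde{\bs u}_h)\|_0+\nu^{-1/2}\|\widetilde p_h\|_0$, not just the first term, so the velocity and pressure bounds are coupled and must be closed together by the usual two-step absorption argument --- which is exactly what the cited Babu\v{s}ka--Brezzi theory does, so nothing essential is lost.
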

\begin{proof}
By \eqref{eq:infsup1}, we have the inf-sup condition
\[
\nu^{-1/2}\| q_h \|_{0}\lesssim \sup_{\bs v_h\in\bs V_h}\frac{b_h(\bs v_h, q_h)}{\nu^{1/2}\|\bs\varepsilon_h(\bs v_h)\|_0}\quad\forall~q_h\in \mathcal Q_h.
\]
And we get from \eqref{eq:normequivalenceH1} that
\[
\nu \|\bs\varepsilon_h(\bs v_h)\|_{0}^2\lesssim \nu a_h(\bs v_h, \bs v_h)\quad\forall~\bs v_h\in\bs V_h.
\]
Therefore \eqref{eq:infsup2} follows from the Babu{\v{s}}ka-Brezzi theory.
\end{proof}

According to the stability result \eqref{eq:infsup2}, the divergence-free nonconforming virtual element method~\eqref{eq:mixedvem1}-\eqref{eq:mixedvem2} is uniquely solvable.
Thanks to \eqref{eq:H1projlocal2}, the computable $\bs\Pi^K\boldsymbol{u}_h$ is divergence-free for each $K\in\mathcal T_h$.

\subsection{Error analysis}

Now it's ready to show the optimal error estimate of the nonconforming virtual element method~\eqref{eq:mixedvem1}-\eqref{eq:mixedvem2}.
\begin{theorem}\label{thm:energyerror}
% Let $(\bs u, p)\in\bs H_0^1(\Omega;\mathbb R^d)\times L_0^2(\Omega)$ be the solution of the Stokes problem~\eqref{eq:stokes}, and 
Let $(\bs u_h, p_h)\in\bs V_h\times\mathcal Q_h$ be the solution of the divergence-free nonconforming virtual element method~\eqref{eq:mixedvem1}-\eqref{eq:mixedvem2}.
Assume $\bs u\in \bs H^{k+1}(\Omega;\mathbb R^d)$, $p\in H^k(\Omega)$ and $\bs f\in \bs H^{k-1}(\Omega;\mathbb R^d)$.
Then it holds
\begin{equation}\label{eq:energyerror}
\nu\|\bs\varepsilon_h(\bs u-\bs u_h)\|_{0}+\|p-p_h\|_{0}\lesssim h^{k}(\nu|\bs u|_{k+1}+|p|_{k}+|\bs f|_{k-1}).
\end{equation}
\end{theorem}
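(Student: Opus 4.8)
The plan is to follow the standard Strang-type argument for nonconforming mixed methods, exploiting the key structural features established earlier: the commuting property \eqref{eq:PiKdiv} (hence $\div_h \bs I_h\bs u = Q_h^{k-1}\div\bs u = 0$), the Galerkin orthogonality \eqref{eq:20181012-2} of the interpolation operator, the norm equivalence \eqref{eq:normequivalenceH1}, the discrete inf-sup condition \eqref{eq:infsup2}, and the interpolation error estimate \eqref{eq:IKerror}. I would begin by splitting the error through the interpolant: write $\bs u - \bs u_h = (\bs u - \bs I_h\bs u) + (\bs I_h\bs u - \bs u_h)$ and $p - p_h = (p - Q_h^{k-1}p) + (Q_h^{k-1}p - p_h)$. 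The first terms in each split are controlled directly by \eqref{eq:IKerror} and \eqref{eq:QKerror}, giving $O(h^k)$ contributions. For the discrete errors $\bs e_h := \bs I_h\bs u - \bs u_h \in \bs V_h$ and $\epsilon_h := Q_h^{k-1}p - p_h \in \mathcal Q_h$, I would apply the inf-sup condition \eqref{eq:infsup2} to $(\bs e_h, \epsilon_h)$ and estimate the numerator
\[
\mathcal R(\bs v_h, q_h) := \nu a_h(\bs e_h, \bs v_h) + b_h(\bs v_h, \epsilon_h) + b_h(\bs e_h, q_h)
\]
for arbitrary $(\bs v_h, q_h) \in \bs V_h \times \mathcal Q_h$.

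The second equation is easy: since $\div_h \bs I_h \bs u = 0$ and $\div_h \bs u_h = 0$ by \eqref{eq:mixedvem2} (tested against $\mathcal Q_h$, using $\div_h \bs V_h = \mathcal Q_h$), we get $b_h(\bs e_h, q_h) = 0$. For the first two terms I substitute the discrete equation \eqref{eq:mixedvem1}: $\nu a_h(\bs u_h, \bs v_h) + b_h(\bs v_h, p_h) = \langle \bs f, \bs v_h\rangle$, so
\[
\mathcal R(\bs v_h, q_h) = \nu a_h(\bs I_h\bs u, \bs v_h) + b_h(\bs v_h, Q_h^{k-1}p) - \langle \bs f, \bs v_h\rangle.
\]
Now I use the Galerkin orthogonality \eqref{eq:20181012-2} to replace $a_h(\bs I_h\bs u, \bs v_h)$ by $a_h(\bs u, \bs v_h)$, and then $a_h(\bs u,\bs v_h) = \sum_K (\bs Q_{k-1}^K\bs\varepsilon(\bs u), \bs\varepsilon(\bs v_h))_K$ since $\bs\Pi^K$ acts as identity on the consistency part up to the projected strain — more precisely I would rewrite $a_h(\bs u, \bs v_h)$ using $(\bs Q_{k-1}^K \bs\varepsilon(\bs u), \bs Q_{k-1}^K\bs\varepsilon(\bs v_h))_K + S_K(\bs u - \bs\Pi^K\bs u, \bs v_h - \bs\Pi^K\bs v_h)$ and exploit that for smooth $\bs u$, the stabilization term is of order $h^k$ by \eqref{eq:IKerror}-type bounds on $\bs u - \bs\Pi^K\bs u$. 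Next, against the continuous PDE tested with $\bs v_h$ (which is nonconforming, so boundary terms appear), $\nu(\bs\varepsilon(\bs u), \bs\varepsilon_h(\bs v_h)) + (\div_h \bs v_h, p) = (\bs f, \bs v_h) + \sum_{F}\int_F (\nu\bs\varepsilon(\bs u)\bs n + p\bs n)\cdot \llbracket\bs v_h\rrbracket$, and the jump term is the nonconforming consistency error, bounded by $O(h^k)$ using $\bs Q_{k-1}^F\llbracket\bs v_h\rrbracket = \bs 0$ together with \eqref{eq:QFerror} applied to $\nu\bs\varepsilon(\bs u)\bs n + p\bs n$. Finally the term $(\bs f, \bs v_h) - \langle \bs f, \bs v_h\rangle$ is the quadrature/load consistency error: for $k \ge 3$ it is $(\bs f - \bs Q_h^{k-2}\bs f, \bs v_h - \bs Q_h^0\bs v_h)$-type and bounded by $h^{k-1}|\bs f|_{k-1}\,\|\bs v_h\|_{1,h}$ via \eqref{eq:QKerror} and the Poincaré inequality \eqref{eq:poincareinequality}; for $k=2$ the same bound holds using $\bs f - \bs Q_h^0 \bs f$ against $\bs v_h - \bs\Pi_h\bs v_h$ and \eqref{eq:poincare}.

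Collecting all pieces, $\mathcal R(\bs v_h, q_h) \lesssim h^k(\nu|\bs u|_{k+1} + |p|_k + |\bs f|_{k-1})\,(\nu^{1/2}\|\bs\varepsilon_h(\bs v_h)\|_0 + \nu^{-1/2}\|q_h\|_0)$ — here I must be careful to track the powers of $\nu$ so they match the weighted norm in \eqref{eq:infsup2} (e.g. the pressure consistency term $|p|_k$ pairs naturally with $\nu^{-1/2}\|q_h\|_0$ after inserting $\nu^{1/2}\nu^{-1/2}$, and the load term with $\|\bs v_h\|_{1,h} \lesssim \|\bs\varepsilon_h(\bs v_h)\|_0$). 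Then \eqref{eq:infsup2} yields $\nu^{1/2}\|\bs\varepsilon_h(\bs e_h)\|_0 + \nu^{-1/2}\|\epsilon_h\|_0 \lesssim h^k(\nu^{1/2}|\bs u|_{k+1} + \nu^{-1/2}|p|_k + \nu^{-1/2}|\bs f|_{k-1})$, and combining with the interpolation estimates for $\bs u - \bs I_h\bs u$ and $p - Q_h^{k-1}p$ via the triangle inequality gives \eqref{eq:energyerror}. The main obstacle I anticipate is the bookkeeping of the three separate consistency errors (stabilization term on the exact solution, nonconforming jump term, and load-functional term) together with the correct $\nu$-weighting throughout; each is routine individually, but assembling them cleanly into the single weighted bound required by \eqref{eq:infsup2}, and handling the $k=2$ versus $k\ge 3$ dichotomy in $\langle\bs f,\bs v_h\rangle$ uniformly, requires care.
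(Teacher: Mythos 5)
Your proposal follows essentially the same route as the paper: split through $\bs I_h\bs u$ and $Q_h^{k-1}p$, reduce the discrete error to the residual $\nu a_h(\bs I_h\bs u,\bs v_h)+b_h(\bs v_h,p)-\langle\bs f,\bs v_h\rangle$ using $\div_h\bs I_h\bs u=0$, bound it by the consistency estimates, and close with the weighted inf-sup condition \eqref{eq:infsup2} and the interpolation estimates \eqref{eq:IKerror}, \eqref{eq:QKerror}; the only difference is that you spell out the consistency errors (stabilization, nonconforming jump, load term) that the paper delegates to the references, and you do so correctly. One tiny slip: the load-consistency bound should read $h^{k}|\bs f|_{k-1}\|\bs v_h\|_{1,h}$ (the double orthogonality $(\bs f-\bs Q_h^{k-2}\bs f,\bs v_h-\bs Q_h^{0}\bs v_h)$ you already invoke supplies the extra power of $h$), not $h^{k-1}$.
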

\begin{proof}
Take any $\bs v_h\in\bs V_h$ and $q_h\in\mathcal Q_h$.
Following the arguments in \cite{ChenHuang2020,AyusodeDiosLipnikovManzini2016}, we achieve the consistency errors
\begin{equation*}
\nu(\boldsymbol{\varepsilon}(\bs u), \boldsymbol{\varepsilon}_h(\bs v_h))+(\div_h\bs v_h,p)-\langle\bs f, \bs v_h\rangle
\lesssim h^{k}(\nu|\bs u|_{k+1}+|p|_{k}+|\bs f|_{k-1})\|\bs\varepsilon_h(\bs v_h)\|_{0},%\label{eq:consistencyerror}
\end{equation*}
\begin{equation}\label{eq:20180830-5}
a_h(\bs I_h\bs u, \bs v_h)-(\boldsymbol{\varepsilon}(\bs u), \boldsymbol{\varepsilon}_h(\bs v_h))\lesssim h^{k}|\bs u|_{k+1}\|\bs\varepsilon_h(\bs v_h)\|_{0}.%\label{eq:consistencyerror}
\end{equation}
Then
we get from \eqref{eq:mixedvem1}-\eqref{eq:mixedvem2}, \eqref{eq:20190829-2} and the second equation in problem \eqref{eq:stokes} that %and \eqref{eq:consistencyerror}-\eqref{eq:20180830-5} that
\begin{align*}
&\nu a_h(\bs I_h\bs u-\bs u_h, \bs v_h) + b_h(\bs v_h, Q_h^{k-1}p-p_h)+b_h(\bs I_h\bs u-\bs u_h, q_h) \\
=&\nu a_h(\bs I_h\bs u, \bs v_h) + b_h(\bs v_h, Q_h^{k-1}p)+b_h(\bs I_h\bs u, q_h)-\langle\bs f, \bs v_h\rangle \\
=&\nu a_h(\bs I_h\bs u, \bs v_h) + b_h(\bs v_h, p)-\langle\bs f, \bs v_h\rangle \\
\lesssim&h^{k}(\nu|\bs u|_{k+1}+|p|_{k}+|\bs f|_{k-1})\|\bs\varepsilon_h(\bs v_h)\|_{0}.
\end{align*}
Due to \eqref{eq:infsup2} with $\widetilde{\bs u}_h=\bs I_h\bs u-\bs u_h$ and $\widetilde{p}_h=Q_h^{k-1}p-p_h$, it follows
\begin{align*}
&\;\nu^{1/2}\|\bs\varepsilon_h(\bs I_h\bs u-\bs u_h)\|_{0}+\nu^{-1/2}\|Q_h^{k-1}p-p_h\|_{0}  \\
\lesssim &\sup_{\bs v_h\in\bs V_h, q_h\in\mathcal Q_h}\frac{h^{k}(\nu|\bs u|_{k+1}+|p|_{k}+|\bs f|_{k-1})\|\bs\varepsilon_h(\bs v_h)\|_{0}}{\nu^{1/2}\|\bs\varepsilon_h(\bs v_h)\|_{0}+\nu^{-1/2}\|q_h \|_{0}} \\
\lesssim & \;h^{k}(\nu^{1/2}|\bs u|_{k+1}+\nu^{-1/2}|p|_{k}+\nu^{-1/2}|\bs f|_{k-1}).
\end{align*}
Hence
\[
\nu\|\bs\varepsilon_h(\bs I_h\bs u-\bs u_h)\|_{0}+\|Q_h^{k-1}p-p_h\|_{0}\lesssim h^{k}(\nu|\bs u|_{k+1}+|p|_{k}+|\bs f|_{k-1}).
\]
Thus we achieve \eqref{eq:energyerror} from the triangle inequality, \eqref{eq:IKerror} and \eqref{eq:QKerror}.
\end{proof}

\subsection{Pressure-robust discretization}

Following the ideas in \cite{Linke2014,JohnLinkeMerdonNeilanEtAl2017}, we will modify the right hand side of \eqref{eq:mixedvem1} to develop a pressure-robust nonconforming virtual element method for the Stokes problem~\eqref{eq:stokes} in this subsection.

To this end, we first extend the Raviart-Thomas element \cite{RaviartThomas1977,Nedelec1980,ArnoldFalkWinther2006} on simplices to polytopes.
For each simplex $K'\in \mathcal T_h^*$, introduce the shape function space of Raviart-Thomas element 
$\bs {RT}_{k-1}(K'):=\mathbb P_{k-1}(K';\mathbb R^d)+\bs x\mathbb P_{k-1}(K')$.
For each polytope $K\in \mathcal T_h$, let the space of shape functions
\begin{align*}
\widetilde{\bs {RT}}_{k-1}(K):=\{\bs v\in\bs H(\div, K): &\,\bs v|_{K'}\in \bs {RT}_{k-1}(K') \textrm{ for each } K'\in\mathcal T_K, \\
&\qquad\qquad\qquad\quad\textrm{ and } \div\bs v\in\mathbb P_{k-1}(K)\}.
\end{align*}
It is obvious that $\widetilde{\bs {RT}}_{k-1}(K)=\bs {RT}_{k-1}(K)$ when $K$ is a simplex.
Since the divergence operator $\div: \bs x\mathbb P_{k-1}(K)\to\mathbb P_{k-1}(K)$ is bijective, it holds the decomposition
$$
\widetilde{\bs {RT}}_{k-1}(K)=\widetilde{\bs {RT}}_{k-1}(K;\div0)\oplus \bs x\mathbb P_{k-1}(K),
$$
where $\widetilde{\bs {RT}}_{k-1}(K;\div0):=\{\bs v\in\widetilde{\bs {RT}}_{k-1}(K): \div\bs v=0\}$. Thus
\begin{align*}
    \dim\widetilde{\bs {RT}}_{k-1}(K)=&\,\#\mathcal F(\mathcal T_K)\dim\mathbb P_{k-1}(F)  + \#\mathcal T_K\dim\mathbb P_{k-2}(K;\mathbb R^d) \\
    &-\#\mathcal T_K\dim\mathbb P_{k-1}(K)+ \dim\mathbb P_{k-1}(K)\\
    =&\,\#\mathcal F(\mathcal T_K)\dim\mathbb P_{k-1}(F)  + \dim\mathbb P_{k-2}(K;\mathbb R^d) \\
    &+ (\#\mathcal T_K-1)(\dim\mathbb G_{k-2}^{\oplus}(K)-1), 
\end{align*}
where $\mathcal F(\mathcal T_K)$ is the set of all $(d-1)$-dimensional faces of the partition $\mathcal T_K$, and $F$ is some face in $\mathcal F(\mathcal T_K)$.
\begin{remark}\label{remark:RTdiv0}\rm
The local space $\widetilde{\bs {RT}}_{k-1}(K;\div0)$ can be explicitly expressed by using the finite element de Rham complex \cite{ArnoldFalkWinther2006}. Indeed we have $\widetilde{\bs {RT}}_{k-1}(K;\div0)=\curl V^{c}_{k}(\mathcal T_K)$ in two and three dimensions, where 
$$
V^{c}_{k}(\mathcal T_K):=\{v\in H^1(K): v|_{K'}\in\mathbb P_k(K')\textrm{ for each } K'\in\mathcal T_K\}
$$
in two dimensions, and
$$
V^{c}_{k}(\mathcal T_K):=\{\bs v\in H(\curl; K): \bs v|_{K'}\in\mathbb P_k(K';\mathbb R^3)\textrm{ for each } K'\in\mathcal T_K\}
$$
in three dimensions.
\end{remark}

The degrees of freedom for space $\widetilde{\bs {RT}}_{k-1}(K)$ are given by
\begin{align}
    (\bs v\cdot\bs n, q)_F & \quad\forall~q\in\mathbb P_{k-1}(F) \textrm{ on each }  F\in\mathcal F^{\partial}(\mathcal T_K), \label{rtpolytopedof1}\\
    (\bs v, \bs q)_K & \quad\forall~\bs q\in\mathbb P_{k-2}(K; \mathbb R^d), \label{rtpolytopedof2} \\
    (\bs v, \bs q)_K & \quad\forall~\bs q\in \bs B_{k-2}(\mathcal T_K), \label{rtpolytopedof3}
\end{align}
where $\mathcal F^{\partial}(\mathcal T_K):=\{F\in\mathcal F(\mathcal T_K): F\subset\partial K\}$, and
$$
\bs B_{k-2}(\mathcal T_K):=\{\bs v\in\widetilde{\bs {RT}}_{k-1}(K)\cap H_0(\div, K): (\bs v, \bs q)_K=0 \quad\forall~\bs q\in\mathbb P_{k-2}(K; \mathbb R^d)\}.
$$
The degrees of freedom \eqref{rtpolytopedof3} will disappear when $K$ is a simplex.
Clearly the degrees of freedom \eqref{rtpolytopedof1}-\eqref{rtpolytopedof3} are unisolvent for space $\widetilde{\bs {RT}}_{k-1}(K)$.
\begin{remark}\rm
Due to Remark \ref{remark:RTdiv0}, we have $\bs B_{k-2}(\mathcal T_K)=\curl\mathring{V}^{c}_{k}(\mathcal T_K)$ in two and three dimensions, where 
    $$
    \mathring{V}^{c}_{k}(\mathcal T_K):=\{v\in V^{c}_{k}(\mathcal T_K)\cap H_0^1(K): (v, q)_K=0\quad\forall~q\in\mathbb P_{k-3}(K)\}
    $$
    in two dimensions, and
    $$
    \mathring{V}^{c}_{k}(\mathcal T_K):=\{\bs v\in V^{c}_{k}(\mathcal T_K)\cap H_0(\curl; K): (\bs v, \bs q)_K=0\quad\forall~\bs q\in\curl\mathbb P_{k-2}(K;\mathbb R^3)\}
    $$
    in three dimensions.
\end{remark}

Next we introduce an interpolation operator.
Let $\bs I_K^{RT}: \bs H^1(K;\mathbb R^d)\to \widetilde{\bs {RT}}_{k-1}(K)$ be determined by
\begin{align}
    ((\bs I_K^{RT}\bs v)\cdot\bs n)|_F &=Q_{k-1}^F(\bs v\cdot\bs n) \quad \forall~F\in\mathcal F(K),\notag\\
    (\bs I_K^{RT}\bs v, \bs q)_K&=(\bs v, \bs q)_K\qquad\quad\forall~\bs q\in\mathbb P_{k-2}(K; \mathbb R^d), \notag\\
    (\bs I_K^{RT}\bs v, \bs q)_K&=(\bs\Pi^K\bs v, \bs q)_K\quad\;\,\forall~\bs q\in\bs B_{k-2}(\mathcal T_K). \label{eq:IKRT3}
\end{align}
Differently from $\bs I_K\bs v$, the projector $\bs I_K^{RT}\bs v$ can be computed using only the degrees of freedom~\eqref{dof1}-\eqref{dof2}.
And we have
$$
\bs I_K^{RT}\bs q=\bs q\quad\forall~\bs q\in\mathbb P_{k-1}(K;\mathbb R^d),
$$
\begin{equation}\label{eq:IKRTdivfree}
\div(\bs I_K^{RT}\bs v)=\bs Q_{k-1}^K(\div\bs v)\quad\forall~\bs v\in\bs H^1(K;\mathbb R^d),
\end{equation}
\begin{equation}\label{eq:20201210-2}
\|\bs v -\bs I_K^{RT}\bs v\|_{0,K}+h_K|\bs v -\bs I_K^{RT}\bs v|_{1,K}\lesssim h_K|\bs v|_{1,k}\quad\forall~\bs v\in\bs H^1(K;\mathbb R^d).
\end{equation}

Define a global generalized Raviart-Thomas element space based on the partition $\mathcal T_h$ as
$$
\widetilde{\bs {RT}}_h:=\{\bs v_h\in H_0(\div,\Omega): \bs v_h|_K\in\widetilde{\bs {RT}}_{k-1}(K) \textrm{ for each } K\in\mathcal T_h\}.
$$
And let an interpolation operator $\bs I_h^{RT}: \bs V_h\to \widetilde{\bs {RT}}_h$ be determined by $(\bs I_h^{RT}\bs v_h)|_K:=\bs I_K^{RT}(\bs v_h|_K)$ for each $K\in\mathcal T_h$. It follows from \eqref{eq:IKRTdivfree} and the fact $\div_h\bs v_h\in\mathcal Q_h$ that
\begin{equation}\label{eq:IhRTdivfree}
\div(\bs I_h^{RT}\bs v_h)=\div_h\bs v_h\quad\forall~\bs v_h\in \bs V_h.
\end{equation}

Now we revise the right hand side of \eqref{eq:mixedvem1} with the help of $\bs I_h^{RT}$ to acquire a pressure-robust virtual element method for the Stokes problem~\eqref{eq:stokes}: find $\bs u_h\in \bs V_h$ and $p_h\in \mathcal Q_h$ such that
\begin{align}
\nu a_h(\bs u_h, \bs v_h) + b_h(\bs v_h, p_h)& = (\bs f, \bs I_h^{RT}\bs v_h) \qquad \forall~\bs v_h\in \bs V_h,\label{eq:mixedvempressrobust1}\\
b_h(\bs u_h, q_h)&=0 \qquad\qquad\qquad\;\, \forall~q_h\in \mathcal Q_h. \label{eq:mixedvempressrobust2}
\end{align}

\begin{theorem}\label{thm:pressrobustenergyerror}
% Let $(\bs u, p)\in\bs H_0^1(\Omega;\mathbb R^d)\times L_0^2(\Omega)$ be the solution of the Stokes problem~\eqref{eq:stokes}, and 
Let $(\bs u_h, p_h)\in\bs V_h\times\mathcal Q_h$ be the solution of the pressure-robust nonconforming virtual element method~\eqref{eq:mixedvempressrobust1}-\eqref{eq:mixedvempressrobust2}.
Assume $\bs u\in \bs H^{k+1}(\Omega;\mathbb R^d)$.
Then %it holds
\begin{equation}\label{eq:pressrobustenergyerror}
\nu\|\bs\varepsilon_h(\bs u-\bs u_h)\|_{0}+\|Q_h^{k-1}p-p_h\|_{0}\lesssim \nu h^{k}|\bs u|_{k+1}.
\end{equation}
\end{theorem}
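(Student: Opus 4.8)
The plan is to mimic the error analysis of Theorem~\ref{thm:energyerror}, exploiting that the only change is on the right-hand side, namely $\langle\bs f,\bs v_h\rangle$ is replaced by $(\bs f,\bs I_h^{RT}\bs v_h)$. First I would recall the key structural fact: since $\bs u\in\bs H^{k+1}(\Omega;\mathbb R^d)$ solves \eqref{eq:stokes} with $\div\bs u=0$, testing \eqref{eq:stokes} against $\bs I_h^{RT}\bs v_h\in\widetilde{\bs{RT}}_h\subset H_0(\div,\Omega)$ and integrating by parts, the pressure term $(\nabla p,\bs I_h^{RT}\bs v_h)=-(p,\div(\bs I_h^{RT}\bs v_h))=-(p,\div_h\bs v_h)=-(Q_h^{k-1}p,\div_h\bs v_h)$ by \eqref{eq:IhRTdivfree} and since $\div_h\bs v_h\in\mathcal Q_h$. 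This is precisely the mechanism that decouples the velocity error from $p$: the pressure only enters through its $L^2$-projection $Q_h^{k-1}p$, which is exactly subtracted by $b_h(\bs v_h,\cdot)$. So the consistency error involves only $\nu(\boldsymbol\varepsilon(\bs u),\boldsymbol\varepsilon_h(\bs v_h))-(\bs f,\bs I_h^{RT}\bs v_h)-\nu(\text{something nonconforming})$, with no $|p|_k$ or $|\bs f|_{k-1}$ terms.

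Next I would set up the error equation for $\widetilde{\bs u}_h:=\bs I_h\bs u-\bs u_h$ and $\widetilde p_h:=Q_h^{k-1}p-p_h$, paralleling the proof of Theorem~\ref{thm:energyerror}. Using \eqref{eq:mixedvempressrobust1}--\eqref{eq:mixedvempressrobust2}, \eqref{eq:20190829-2}, and the computation above, I get
\[
\nu a_h(\widetilde{\bs u}_h,\bs v_h)+b_h(\bs v_h,\widetilde p_h)+b_h(\widetilde{\bs u}_h,q_h)
=\nu a_h(\bs I_h\bs u,\bs v_h)+b_h(\bs v_h,Q_h^{k-1}p)-(\bs f,\bs I_h^{RT}\bs v_h),
\]
and I want to bound the right-hand side by $\nu h^k|\bs u|_{k+1}\|\boldsymbol\varepsilon_h(\bs v_h)\|_0$. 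Split it as $\bigl(\nu a_h(\bs I_h\bs u,\bs v_h)-\nu(\boldsymbol\varepsilon(\bs u),\boldsymbol\varepsilon_h(\bs v_h))\bigr)+\bigl(\nu(\boldsymbol\varepsilon(\bs u),\boldsymbol\varepsilon_h(\bs v_h))+b_h(\bs v_h,Q_h^{k-1}p)-(\bs f,\bs I_h^{RT}\bs v_h)\bigr)$. The first bracket is $O(\nu h^k|\bs u|_{k+1}\|\boldsymbol\varepsilon_h(\bs v_h)\|_0)$ by \eqref{eq:20180830-5}. For the second bracket, use $b_h(\bs v_h,Q_h^{k-1}p)=(\div_h\bs v_h,Q_h^{k-1}p)=(\div_h\bs v_h,p)=-(\nabla p,\bs I_h^{RT}\bs v_h)$ from the integration-by-parts step, so the second bracket equals $\nu(\boldsymbol\varepsilon(\bs u),\boldsymbol\varepsilon_h(\bs v_h))-(\nabla p,\bs I_h^{RT}\bs v_h)-(\bs f,\bs I_h^{RT}\bs v_h)$; since $-\div(\nu\boldsymbol\varepsilon(\bs u))-\nabla p=\bs f$, this is $\nu(\boldsymbol\varepsilon(\bs u),\boldsymbol\varepsilon_h(\bs v_h))+( \div(\nu\boldsymbol\varepsilon(\bs u)),\bs I_h^{RT}\bs v_h)$, a purely nonconforming consistency term that must be estimated element-by-element via integration by parts, using that $\bs Q_{k-1}^F(\llbracket\bs v_h\rrbracket)=\bs0$ and the approximation property of $\bs I_K^{RT}$ in \eqref{eq:20201210-2} together with the $L^2$-face-projection error \eqref{eq:QFerror}; this yields the bound $\lesssim\nu h^k|\bs u|_{k+1}\|\boldsymbol\varepsilon_h(\bs v_h)\|_0$.

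Then I would invoke the inf-sup stability \eqref{eq:infsup2} with $\widetilde{\bs u}_h=\bs I_h\bs u-\bs u_h$, $\widetilde p_h=Q_h^{k-1}p-p_h$ to conclude $\nu\|\boldsymbol\varepsilon_h(\bs I_h\bs u-\bs u_h)\|_0+\|Q_h^{k-1}p-p_h\|_0\lesssim\nu h^k|\bs u|_{k+1}$, exactly as at the end of the proof of Theorem~\ref{thm:energyerror}. Finally, the triangle inequality combined with the interpolation estimate \eqref{eq:IKerror} (which gives $\nu\|\boldsymbol\varepsilon_h(\bs u-\bs I_h\bs u)\|_0\lesssim\nu h^k|\bs u|_{k+1}$) delivers \eqref{eq:pressrobustenergyerror}. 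I expect the main obstacle to be the careful treatment of the nonconforming consistency term $\nu(\boldsymbol\varepsilon(\bs u),\boldsymbol\varepsilon_h(\bs v_h))+(\div(\nu\boldsymbol\varepsilon(\bs u)),\bs I_h^{RT}\bs v_h)$: one must rewrite it as a sum over faces of $(\nu\boldsymbol\varepsilon(\bs u)\bs n,\bs v_h-\bs I_h^{RT}\bs v_h)_F$-type jump terms, insert the $L^2(F)$-projections to exploit the weak continuity of $\bs V_h$, and combine the trace and approximation estimates so that no $\nu$-independent or pressure-dependent quantities reappear; this is where pressure-robustness (the absence of $|p|_k$) is genuinely used, via \eqref{eq:IhRTdivfree} and $\bs I_h^{RT}\bs v_h\in H_0(\div,\Omega)$.
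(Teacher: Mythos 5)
Your proposal is correct and follows essentially the same route as the paper: rewrite $(\bs f,\bs I_h^{RT}\bs v_h)$ via the momentum equation and \eqref{eq:IhRTdivfree} so that the pressure collapses into $b_h(\bs v_h,Q_h^{k-1}p)$, reduce the consistency error to $\nu(\boldsymbol{\varepsilon}(\bs u),\boldsymbol{\varepsilon}_h(\bs v_h))+\nu(\div(\boldsymbol{\varepsilon}(\bs u)),\bs I_h^{RT}\bs v_h)$, estimate it through element-wise integration by parts and inserted projections, and conclude via \eqref{eq:20180830-5}, the inf-sup condition \eqref{eq:infsup2} and \eqref{eq:IKerror}. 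The only point to make explicit is that bounding the volume term $\nu(\div(\boldsymbol{\varepsilon}(\bs u)),\bs I_h^{RT}\bs v_h-\bs v_h)$ at order $h^k$ requires inserting $\bs Q_h^{k-2}\div(\boldsymbol{\varepsilon}(\bs u))$, which is legitimate because $\bs I_K^{RT}\bs v_h-\bs v_h$ is $L^2$-orthogonal to $\mathbb P_{k-2}(K;\mathbb R^d)$ by the moment conditions defining $\bs I_K^{RT}$; the estimate \eqref{eq:20201210-2} alone would only give first order there.
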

\begin{proof}
Noting that $\bs I_h^{RT}\bs v_h\in H_0(\div,\Omega)$,  it follows from the first equation of the Stokes problem~\eqref{eq:stokes} and \eqref{eq:IhRTdivfree} that
\begin{align*}
(\bs f, \bs I_h^{RT}\bs v_h)&=-\nu(\div(\boldsymbol{\varepsilon}(\bs u)), \bs I_h^{RT}\bs v_h)+(p,\div(\bs I_h^{RT}\bs v_h)) \\
&=-\nu(\div(\boldsymbol{\varepsilon}(\bs u)), \bs I_h^{RT}\bs v_h)+(p,\div_h\bs v_h).
\end{align*}
Then we obtain from the integration by parts and \eqref{eq:20201210-2} that
\begin{align*}
&\nu(\boldsymbol{\varepsilon}(\bs u), \boldsymbol{\varepsilon}_h(\bs v_h))+(\div_h\bs v_h,p)-(\bs f, \bs I_h^{RT}\bs v_h)\\
=&\nu(\boldsymbol{\varepsilon}(\bs u), \boldsymbol{\varepsilon}_h(\bs v_h))+\nu(\div(\boldsymbol{\varepsilon}(\bs u)), \bs I_h^{RT}\bs v_h)\\
=&\nu(\div(\boldsymbol{\varepsilon}(\bs u)), \bs I_h^{RT}\bs v_h-\bs v_h)+\nu\sum_{F\in\mathcal F_h^1}(\boldsymbol{\varepsilon}(\bs u)\bs n_{F}, \llbracket\bs v_h\rrbracket)_F\\
=&\nu(\div(\boldsymbol{\varepsilon}(\bs u))- \bs Q_h^{k-2}\div(\boldsymbol{\varepsilon}(\bs u)), \bs I_h^{RT}\bs v_h-\bs v_h) \\
& +\nu\sum_{F\in\mathcal F_h^1}(\boldsymbol{\varepsilon}(\bs u)\bs n_{F}-\bs Q_{k-1}^F(\boldsymbol{\varepsilon}(\bs u)\bs n_{F}), \llbracket\bs v_h\rrbracket)_F,
\end{align*}
which together with \eqref{eq:20201210-2} and the discrete Poincar\'e inequality \eqref{eq:poincareinequality} gives
$$
\nu(\boldsymbol{\varepsilon}(\bs u), \boldsymbol{\varepsilon}_h(\bs v_h))+(\div_h\bs v_h,p)-(\bs f, \bs I_h^{RT}\bs v_h) \lesssim \nu h^{k}|\bs u|_{k+1}\|\bs\varepsilon_h(\bs v_h)\|_{0}.
$$
Finally \eqref{eq:pressrobustenergyerror} holds from \eqref{eq:20180830-5} and the proof of Theorem~\ref{thm:energyerror}.
\end{proof}

The estimate \eqref{eq:pressrobustenergyerror} is pressure-robust in the sense that the right hand side of~\eqref{eq:pressrobustenergyerror} only involves the velocity $\boldsymbol{u}$, no pressure $p$ and $\boldsymbol{f}$.

\begin{remark}\rm
% To the best of our knowledge, there is no pressure-robust virtual element method for Stokes problem or Navier-Stokes problem in literature, e.g. \cite{BeiraodaVeigaLovadinaVacca2018,LiuLiNie2020}, even so the divergence-free $H^1$-conforming virtual element is employed in \cite{BeiraodaVeigaLovadinaVacca2018}.
The velocity error in \cite{BeiraodaVeigaLovadinaVacca2018,LiuLiNie2020} depends on a higher order loading effect, thus indirectly depends on the pressure.
Very recently a similar idea, i.e. a modification of the right hand side based on the Raviart-Thomas approximation on a local subtriangulation of the polygons, is applied to derive a pressure-robust conforming virtual element method for Stokes problem in two dimensions in \cite{FrerichsMerdon2020}.
The interpolation operator in \cite{FrerichsMerdon2020} is defined by a local least square problem, which is indeed almost same as $\bs I_K^{RT}$ except \eqref{eq:IKRT3}.
The local energy projector $\bs\Pi^K$ here is based on the local Stokes problem, while the energy projector in \cite{FrerichsMerdon2020} is based on the local Poisson equation. The computable $\bs\Pi^K\boldsymbol{u}_h$ in \cite{FrerichsMerdon2020} is not divergence-free. 
In consideration of small edges encountered in practice with polytopal grids, we refer to \cite{ApelKempfLinkeMerdon2020} for a pressure-robust Crouzeix–Raviart element method for the Stokes equation on anisotropic meshes.
\end{remark}

\section{Reduced Virtual Element Method}

In this section, we study the reduced version of the nonconforming virtual element method~\eqref{eq:mixedvem1}-\eqref{eq:mixedvem2} following the ideas in \cite{BeiraodaVeigaLovadinaVacca2017}.

Since the solution $\bs u_h$ of the discrete method~\eqref{eq:mixedvem1}-\eqref{eq:mixedvem2} is piecewise divergence-free,
it is possible to discretize the velocity in a subspace of $\bs V_h$, such as satisfying the divergence-free constraint.
To this end,
we suggest the local reduced degrees of freedom $\widetilde {\mathcal N}_k(K)$
\begin{align}
(\bs v, \bs q)_F & \quad\forall~\bs q\in\mathbb P_{k-1}(F; \mathbb R^d) \textrm{ on each }  F\in\mathcal F(K), \label{reducedof1}\\
(\bs v, \bs q)_K & \quad\forall~\bs q\in\mathbb G_{k-2}^{\oplus}(K). \label{reducedof2}
\end{align}
And the reduced space of shape functions is given by
$$
\boldsymbol{\widetilde V}_k(K):=\{\bs v\in\boldsymbol{V}_k(K): \div\boldsymbol v\in\mathbb P_{0}(K)\}.
$$
% \begin{align*}
% \bs{\widetilde V}_k(K):=\{\bs v\in \bs H^1(K;\mathbb R^d): &\div\bs v\in\mathbb P_{0}(K), \textrm{ there exists some } s\in L^2(K) \\
% &\textrm{ such that } \div\boldsymbol{\varepsilon}(\bs v)+\nabla s\in \mathbb G_{k-2}^{\oplus}(K), \textrm{ and } \\
% &(\boldsymbol{\varepsilon}(\bs v)\bs n+s\bs n)|_F\in\mathbb P_{k-1}(F;\mathbb R^d) \;\forall~F\in\mathcal F^{1}(K)\}.
% \end{align*}
Let the global reduced virtual element space for the velocity
\[
\widetilde{\bs V}_h:=\{\bs v_h\in \bs V_h: \bs v_h|_K\in \bs{\widetilde V}_k(K)\textrm{ for each } K\in\mathcal T_h\},
\]
and the discrete space for the pressure
\[
\widetilde{\mathcal Q}_h:=\{q_h\in L_0^2(\Omega): q_h|_K\in \mathbb P_{0}(K)\textrm{ for each } K\in\mathcal T_h\}.
\]

Applying the integration by parts, it holds for any $\bs v\in\bs{\widetilde V}_k(K)$ and $q\in \mathbb P_{k-1}(K)$
\begin{align*}
(\bs v, \nabla q)_K&=-(\div\bs v, q)_K+(\bs v\cdot\bs n, q)_{\partial K}\\
&=-(\div\bs v, Q_0^Kq)_K+(\bs v\cdot\bs n, q)_{\partial K}=(\bs v\cdot\bs n, q-Q_0^Kq)_{\partial K}.
\end{align*}
Hence for any $\bs v\in\bs{\widetilde V}_k(K)$, we can compute the $L^2$ projection $Q_{k-2}^K\bs v$ as follows:
\begin{align}
(Q_{k-2}^K\bs v, \bs q)_K&=(\bs v, \bs q)_K\qquad\qquad\qquad\forall~\bs q\in \mathbb G_{k-2}^{\oplus}(K),\label{eq:20190831-1}
\\
(Q_{k-2}^K\bs v, \nabla q)_K&=(\bs v\cdot\bs n, q-Q_0^Kq)_{\partial K}\quad\forall~q\in \mathbb P_{k-1}(K).\label{eq:20190831-2}
\end{align}
And for any $\bs\tau\in\mathbb P_{k-1}(K;\mathbb S)$, it follows from the integration by parts
\[
(\bs\varepsilon(\bs v), \bs\tau)_K=-(\bs v, \div\bs\tau)_K + (\bs v, \bs\tau\bs n)_{\partial K}=-(Q_{k-2}^K\bs v, \div\bs\tau)_K + (\bs v, \bs\tau\bs n)_{\partial K}.
\]
As a result, we can compute the $L^2$ projection $\bs Q_{k-1}^K\bs\varepsilon(\bs v)$ for any $\bs v\in\bs{\widetilde V}_k(K)$ as
\begin{equation}\label{eq:20190831-3}
(\bs Q_{k-1}^K\bs\varepsilon(\bs v), \bs\tau)_K=-(Q_{k-2}^K\bs v, \div\bs\tau)_K + (\bs v, \bs\tau\bs n)_{\partial K}\quad\forall~\bs\tau\in\mathbb P_{k-1}(K;\mathbb S).
\end{equation}
Thanks to \eqref{eq:20190831-1}-\eqref{eq:20190831-3}, for any $\bs v\in\bs{\widetilde V}_k(K)$, the local projection $\bs \Pi_k^{K}\bs v$ is computable based on the degrees of freedom~$\widetilde {\mathcal N}_k(K)$~\eqref{reducedof1}-\eqref{reducedof2}.

Thanks to \eqref{eq:H1projlocal2}, it follows $\div(\bs \Pi_k^{K}\bs v)=\div\bs v\in\mathbb P_0(K)$ for any $\bs v\in\bs{\widetilde V}_k(K)$. Therefore $\bs \Pi_k^{K}\bs{\widetilde V}_k(K)=\bs{\widetilde V}_k(K)\cap\mathbb P_k(K; \mathbb R^d)$.

%The reduced nonconforming virtual element method for the Stokes problem~\eqref{eq:stokes} is to find $\widetilde{\bs u}_h\in \widetilde{\bs V}_h$ and $\widetilde{p}_h\in \widetilde{\mathcal Q}_h$ such that
\begin{theorem}\label{thm:equivalence}
Let $(\bs u_h, p_h)\in\bs V_h\times\mathcal Q_h$ be the solution of the divergence-free nonconforming virtual element method~\eqref{eq:mixedvem1}-\eqref{eq:mixedvem2}, and $(\widetilde{\bs u}_h, \widetilde{p}_h)\in\widetilde{\bs V}_h\times\widetilde{\mathcal Q}_h$ be the solution of the reduced nonconforming virtual element method
\begin{align}
\nu a_h(\widetilde{\bs u}_h, \bs v_h) + b_h(\bs v_h, \widetilde{p}_h)& = \langle\bs f, \bs v_h\rangle \qquad \forall~\bs v_h\in \widetilde{\bs V}_h,\label{eq:reducedmixedvem1}\\
b_h(\widetilde{\bs u}_h, q_h)&=0 \qquad\qquad\quad \forall~q_h\in \widetilde{\mathcal Q}_h. \label{eq:reducedmixedvem2}
\end{align}
Then
\begin{equation}\label{eq:equivalence}
\widetilde{\bs u}_h=\bs u_h,\quad \widetilde{p}_h=Q_h^0p_h.
\end{equation}
\end{theorem}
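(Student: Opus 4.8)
The plan is to show that the reduced method \eqref{eq:reducedmixedvem1}--\eqref{eq:reducedmixedvem2} has a unique solution and that the pair $(\bs u_h, Q_h^0 p_h)$, built from the solution of the full method \eqref{eq:mixedvem1}--\eqref{eq:mixedvem2}, solves the reduced method; then \eqref{eq:equivalence} follows from uniqueness. First I would record the key structural facts: the solution $\bs u_h$ of \eqref{eq:mixedvem1}--\eqref{eq:mixedvem2} is piecewise divergence-free by \eqref{eq:mixedvem2} (taking $q_h$ to run over $\mathcal Q_h \supset \widetilde{\mathcal Q}_h$, together with the fact that $\div_h\bs u_h\in\mathcal Q_h$ and $b_h(\bs u_h,\div_h\bs u_h)=0$ forces $\div_h\bs u_h=0$), so in particular $\bs u_h|_K\in\bs{\widetilde V}_k(K)$ for every $K$, i.e. $\bs u_h\in\widetilde{\bs V}_h$. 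Likewise $\widetilde{\bs V}_h\subset\bs V_h$ and $\widetilde{\mathcal Q}_h\subset\mathcal Q_h$, and the well-posedness of the reduced method follows from the same inf-sup machinery: the bilinear form $a_h$ is coercive on $\widetilde{\bs V}_h$ by \eqref{eq:normequivalenceH1}, and the inf-sup condition $\|q_h\|_0\lesssim\sup_{\bs v_h\in\widetilde{\bs V}_h} b_h(\bs v_h,q_h)/\|\bs v_h\|_{1,h}$ for $q_h\in\widetilde{\mathcal Q}_h$ follows by restricting \eqref{eq:infsup1} — for a piecewise-constant $q_h$ one uses the same interpolant $\bs I_h\bs v$ of a Babu\v ska--Aziz velocity, observing that $\div_h(\bs I_h\bs v)=Q_h^{k-1}(\div\bs v)$ has piecewise-constant pairing against $q_h$, hence the restriction of the interpolant lands in $\widetilde{\bs V}_h$ up to adjusting for the constant part; more cleanly, since $\widetilde{\mathcal Q}_h = Q_h^0\mathcal Q_h$, one can just take $q_h\in\widetilde{\mathcal Q}_h\subset\mathcal Q_h$ in \eqref{eq:infsup1} directly and note $b_h(\bs v_h,q_h)$ only sees $\div_h\bs v_h$ paired with a constant, so replacing $\bs v_h$ by its reduced-space analogue changes nothing. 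This gives unique solvability of \eqref{eq:reducedmixedvem1}--\eqref{eq:reducedmixedvem2} by the Babu\v ska--Brezzi theory.

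Next I would verify that $(\bs u_h, Q_h^0 p_h)$ satisfies the reduced equations. Equation \eqref{eq:reducedmixedvem2}: for any $q_h\in\widetilde{\mathcal Q}_h$ we have $b_h(\bs u_h,q_h)=(\div_h\bs u_h,q_h)=0$ since $\div_h\bs u_h=0$. Equation \eqref{eq:reducedmixedvem1}: take any $\bs v_h\in\widetilde{\bs V}_h\subset\bs V_h$; then by \eqref{eq:mixedvem1},
\[
\nu a_h(\bs u_h,\bs v_h) + b_h(\bs v_h,p_h) = \langle\bs f,\bs v_h\rangle,
\]
so it remains to check $b_h(\bs v_h,p_h) = b_h(\bs v_h,Q_h^0 p_h)$ for $\bs v_h\in\widetilde{\bs V}_h$. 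This is exactly where the reduced space pays off: $b_h(\bs v_h,p_h)=(\div_h\bs v_h,p_h)$ and $\div_h\bs v_h\in\widetilde{\mathcal Q}_h$ is piecewise constant (by definition of $\bs{\widetilde V}_k(K)$), hence on each $K$ the $L^2(K)$-pairing of $\div\bs v_h$ against $p_h$ equals its pairing against $Q_0^K p_h$; summing over $K$ gives $b_h(\bs v_h,p_h)=b_h(\bs v_h,Q_h^0 p_h)$. Therefore $(\bs u_h,Q_h^0 p_h)$ solves \eqref{eq:reducedmixedvem1}--\eqref{eq:reducedmixedvem2}, and by the uniqueness just established, $\widetilde{\bs u}_h=\bs u_h$ and $\widetilde p_h = Q_h^0 p_h$.

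The one point requiring a little care — the main (mild) obstacle — is the well-posedness of the reduced method, specifically the discrete inf-sup condition on $\widetilde{\bs V}_h\times\widetilde{\mathcal Q}_h$; one must make sure that the velocity test functions used to prove \eqref{eq:infsup1} can be taken in the reduced space $\widetilde{\bs V}_h$ when the pressure is piecewise constant. As noted above, this is straightforward because $b_h$ only couples $\div_h\bs v_h$ with the pressure, and for a piecewise-constant pressure one may replace $\bs v_h$ by any element of $\widetilde{\bs V}_h$ with the same piecewise-constant divergence — and such an element exists and is controlled in $\|\cdot\|_{1,h}$ by the Babu\v ska--Aziz inequality \eqref{eq:babuska-aziz} together with the interpolation estimate \eqref{eq:IKerror} and \eqref{eq:20190829-2}, since the interpolant of a velocity with constant divergence already lies in $\widetilde{\bs V}_h$. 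Everything else is a direct consequence of $\div_h\bs u_h=0$, the inclusions $\widetilde{\bs V}_h\subset\bs V_h$, $\widetilde{\mathcal Q}_h\subset\mathcal Q_h$, and the elementwise identity $(\div\bs v_h, p_h)_K=(\div\bs v_h, Q_0^K p_h)_K$ valid for $\div\bs v_h\in\mathbb P_0(K)$.
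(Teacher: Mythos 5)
Your proposal is correct and follows essentially the same route as the paper: establish unique solvability of the reduced method by restricting the inf-sup machinery of Section 4.2 to $\widetilde{\bs V}_h\times\widetilde{\mathcal Q}_h$, observe that $\div_h\bs u_h=0$ places $\bs u_h$ in $\widetilde{\bs V}_h$, check that $(\bs u_h, Q_h^0p_h)$ satisfies the reduced equations (using that $\div_h\bs v_h$ is piecewise constant for $\bs v_h\in\widetilde{\bs V}_h$, so $b_h(\bs v_h,p_h)=b_h(\bs v_h,Q_h^0p_h)$), and conclude by uniqueness. You are merely more explicit than the paper about the reduced inf-sup condition and the replacement of $p_h$ by $Q_h^0p_h$, both of which the paper leaves implicit.
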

\begin{proof}
Following Section \ref{section:discreteinf-sup} and noting $\widetilde{\mathcal Q}_h\subset \mathcal Q_h$, the reduced virtual element method~\eqref{eq:reducedmixedvem1}-\eqref{eq:reducedmixedvem2} is uniquely solvable.
% First we prove the well-posedness of the reduced virtual element method~\eqref{eq:reducedmixedvem1}-\eqref{eq:reducedmixedvem2}.
% Due to $\widetilde{\mathcal Q}_h\subset \mathcal Q_h$ and the proof of Lemma~\ref{lem:infsup1}, for any $q_h\in\widetilde{\mathcal Q}_h$, there exists $\bs v_h\in\bs V_h$ such that
% \[
% \div_h\bs v_h=q_h,\quad \|\bs v_h\|_{1,h}\lesssim \|q_h\|_{0},
% \]
% which also means $\bs v_h\in\widetilde{\bs V}_h$, and thus the inf-sup condition
% \begin{equation}\label{eq:reducedinfsup1}
% \| q_h \|_{0}\lesssim \sup_{\bs v_h\in\widetilde{\bs V}_h}\frac{b_h(\bs v_h, q_h)}{\|\bs v_h\|_{1,h}}\quad\forall~q_h\in \widetilde{\mathcal Q}_h.
% \end{equation}
% We get from \eqref{eq:normequivalenceH1} and the fact $\bs{\widetilde V}_k(K)\subset\bs{V}_k(K)$ that
% \[
% \|\bs\varepsilon_h(\bs v_h)\|_{0}^2\lesssim a_h(\bs v_h, \bs v_h)\quad\forall~\bs v_h\in\widetilde{\bs V}_h.
% \]
% Then we can acquire the unique solvability of the reduced virtual element method~\eqref{eq:reducedmixedvem1}-\eqref{eq:reducedmixedvem2}
% following the proof of Lemma~\ref{lem:stability}.
Thanks to \eqref{eq:mixedvem2}, we have $\div_h\bs u_h=0$ and thus $\bs u_h\in\widetilde{\bs V}_h$. Taking $\bs v_h\in \widetilde{\bs V}_h\subset\bs V_h$, it follows from \eqref{eq:mixedvem1} that
\[
\nu a_h(\bs u_h, \bs v_h) + b_h(\bs v_h, Q_h^0p_h)=\langle\bs f, \bs v_h\rangle.
\]
In other words, $(\bs u_h, Q_h^0p_h)\in \widetilde{\bs V}_h\times\widetilde{\mathcal Q}_h$ satisfies \eqref{eq:reducedmixedvem1} and \eqref{eq:reducedmixedvem2}, which together with the unique solvability of the reduced virtual element method~\eqref{eq:reducedmixedvem1}-\eqref{eq:reducedmixedvem2} indicates \eqref{eq:equivalence}.
\end{proof}

After obtaining $\bs u_h$ and $Q_h^0p_h$ from the reduced virtual element method~\eqref{eq:reducedmixedvem1}-\eqref{eq:reducedmixedvem2},
we can recover the discrete pressure $p_h$ piecewisely. To this end, let $p_h^{\perp}:=p_h-Q_h^0p_h$ and $p_K^{\perp}:=p_h^{\perp}|_K$ for each $K\in\mathcal T_h$.
And define local homogenous spaces
\[
\bs V_{k,0}(K):=\{\bs v\in \bs V_k(K): \bs Q_{\mathbb G_{k-2}^{\oplus}}^K\bs v=\bs0, \textrm{ and } \bs Q_{k-1}^F\bs v=\bs0 \textrm{ for each } F\in\mathcal F(K)\},
\]
\[
\mathcal Q_{k-1,0}(K):=\mathbb P_{k-1}(K)\cap L_0^2(K).
\]
Apparently $\div\bs V_{k,0}(K)\subset\mathcal Q_{k-1,0}(K)$ and $p_K^{\perp}\in\mathcal Q_{k-1,0}(K)$.
%Following the proof of Lemma~\ref{lem:infsup1}, we have
%\[
%\div\bs V_{k,0}(K)=\mathcal Q_{k-1,0}(K),
%\]
%and the inf-sup condition
%\begin{equation*}%\label{eq:infsup1}
%\| q \|_{0,K}\lesssim \sup_{\bs v\in\bs V_{k,0}(K)}\frac{(\div\bs v, q)_K}{\|\bs v\|_{1,K}}\quad\forall~q\in \mathcal Q_{k-1,0}(K).
%\end{equation*}

It is easy to see that $\div: \bs V_{k,0}(K)\to\mathcal Q_{k-1,0}(K)$ is an injection, which combined with the fact $\dim\bs V_{k,0}(K)=\dim\mathcal Q_{k-1,0}(K)$ indicates $\div: \bs V_{k,0}(K)\to\mathcal Q_{k-1,0}(K)$ is a bijection. 

% \begin{lemma}\label{lem:20190831}
% The operator $\div: \bs V_{k,0}(K)\to\mathcal Q_{k-1,0}(K)$ is a bijection.
% \end{lemma}
% \begin{proof}
% It is easy to check that $\div: \bs V_{k,0}(K)\to\mathcal Q_{k-1,0}(K)$ is an injection. Now we show that it is also a surjection.
% For any $q\in\mathcal Q_{k-1,0}(K)$, there exists $\bs w\in \bs H_0^1(K;\mathbb R^d)$
% such that
% \[
% \div\bs w=q,\quad \|\bs w\|_{1,K}\lesssim \|q\|_{0,K}.
% \]
% Take $\bs v\in\bs V_{k,0}(K)$ such that
% \[
% (\bs v, \nabla r)_K = (\bs w, \nabla r)_K \quad\forall~r\in\mathbb P_{k-1}(K).
% \]
% Applying the integration by parts, we get
% \[
% \div\bs v=\div\bs w=q.
% \]
% Thus the operator $\div: \bs V_{k,0}(K)\to\mathcal Q_{k-1,0}(K)$ is a bijection.
% \end{proof}

For any $\bs v\in \bs V_{k,0}(K)$, let $\bs v_h\in\bs V_h$ be defined as
\[
\bs v_h=\begin{cases}
\bs v & \textrm{ in } K, \\
\bs 0 & \textrm{ in } K'\in\mathcal T_h\backslash{K}.
\end{cases}
\]
Then from \eqref{eq:mixedvem1} we get the local problem
\begin{equation}\label{eq:localpPerp}
(\div\bs v, p_K^{\perp})_K = \langle\bs f, \bs v\rangle_K-\nu a_h^K(\bs u_h, \bs v)\quad \forall~\bs v\in \bs V_{k,0}(K).
\end{equation}
Here we have used the fact that $(\div\bs v, Q_0^Kp_h)_K=0$ for any $\bs v\in \bs V_{k,0}(K)$.
The local problem~\eqref{eq:localpPerp} is well-posed due to the bijection $\div: \bs V_{k,0}(K)\to\mathcal Q_{k-1,0}(K)$. %Lemma~\ref{lem:20190831}.

%In summary, in order to acquire the solution $(\bs u_h, p_h)$ of the virtual element method~\eqref{eq:mixedvem1}-\eqref{eq:mixedvem2}, we first solve the reduced virtual element method~\eqref{eq:reducedmixedvem1}-\eqref{eq:reducedmixedvem2} to obtain $(\bs u_h, Q_h^0p_h)$, then solve the local problem~\eqref{eq:localpPerp} piecewisely to get $p_h^{\perp}$, finally set $p_h=p_h^{\perp}+Q_h^0p_h$.

In summary, we decouple the virtual element method~\eqref{eq:mixedvem1}-\eqref{eq:mixedvem2} in the following way:
\begin{enumerate}[(1)]
\item First solve the reduced virtual element method~\eqref{eq:reducedmixedvem1}-\eqref{eq:reducedmixedvem2} to obtain $(\bs u_h, Q_h^0p_h)$;
\item then solve the local problem~\eqref{eq:localpPerp} piecewisely to get $p_h^{\perp}$;
\item finally set $p_h=p_h^{\perp}+Q_h^0p_h$.
\end{enumerate}

%\section{Numerical Results}
\section{Numerical Examples}

In this section, some numerical results of the nonconforming virtual element method~\eqref{eq:mixedvem1}-\eqref{eq:mixedvem2} are provided to verify Theorem~\ref{thm:energyerror}, Theorem~\ref{thm:pressrobustenergyerror} and Theorem~\ref{thm:equivalence}.
Let the viscosity $\nu=1$ and $k=2$.
All of the numerical examples are implemented by using the FEALPy package \cite{fealpy}.

\begin{example}\label{ex:1}
Consider the Stokes problem~\eqref{eq:stokes} on the rectangular domain
$\Omega = (0, 1) \times (0, 1)$. Take $\bs f=(0, \mathrm{Ra}\,(1-y+3y^2))^{\intercal}$ with parameter $\mathrm{Ra}> 0$. The exact solution is (cf. \cite[Example~1.1]{JohnLinkeMerdonNeilanEtAl2017})
    \begin{equation*}
\bs u = \boldsymbol{0}, \quad   p = \mathrm{Ra}\,(y^3 - y^2/2 + y - 7/12).
    \end{equation*}
The parameter $\mathrm{Ra}$ only affects the pressure.
\end{example}

The rectangular domain $\Omega$ is partitioned by the uniform triangle mesh.
The numerical results of error $\| \boldsymbol \varepsilon(\boldsymbol u) -
\boldsymbol \varepsilon_h(\boldsymbol \Pi_h \boldsymbol u_h) \|_0$ with
$\text{Ra} = 1, 10^2, 10^4, 10^6$ for the virtual element
method~\eqref{eq:mixedvem1}-\eqref{eq:mixedvem2} and the pressure-robust virtual
element method~\eqref{eq:mixedvempressrobust1}-\eqref{eq:mixedvempressrobust2}
are listed in Figure~\ref{fig:ex1strian}.  From the left subfigure in
Figure~\ref{fig:ex1strian}, we observe that $\|\boldsymbol
\varepsilon(\boldsymbol u) - \boldsymbol \varepsilon_h(\boldsymbol \Pi_h
\boldsymbol u_h) \|_0$ achieves the optimal convergence rate $O(h^{2})$ for the
virtual element method~\eqref{eq:mixedvem1}-\eqref{eq:mixedvem2}, which is in
coincidence with Theorem~\ref{thm:energyerror}, but not pressure-robust.  And we
can see from the right subfigure in Figure~\ref{fig:ex1strian} that
$\|\boldsymbol \varepsilon(\boldsymbol u) - \boldsymbol
\varepsilon_h(\boldsymbol \Pi_h \boldsymbol u_h) \|_0$ for the virtual element
method~\eqref{eq:mixedvempressrobust1}-\eqref{eq:mixedvempressrobust2} is zero
up to round-off errors, as indicated by
Theorem~\ref{thm:pressrobustenergyerror}. Hence the virtual element
method~\eqref{eq:mixedvempressrobust1}-\eqref{eq:mixedvempressrobust2} is
pressure-robust.
% \begin{figure}[htbp]
% \centering
% \subfigure[Reduced VEM.]{
% \begin{minipage}[t]{0.52\linewidth}
% \centering
% \includegraphics[scale=0.45]{./figures/Stokespphp2i5Raerror.pdf}
% %\caption{$k=2$.}
% \end{minipage}%
% }%
% \subfigure[Pressure-Robust VEM.]{
% \begin{minipage}[t]{0.52\linewidth}
% \centering
% \includegraphics[scale=0.45]{./figures/StokesRTpphp2i5Raerror.pdf}
% %\caption{$k=3$.}
% \end{minipage}%
% }%
% \centering
% \caption{Error $\| p - \tilde p_h\|_0$ of Example~\ref{ex:1} with $k=2$.}
% \label{fig:ex1pressure}
% \end{figure}
\begin{figure}[htbp]
\centering
\subfigure[VEM~\eqref{eq:mixedvem1}-\eqref{eq:mixedvem2}.]{
\begin{minipage}[t]{0.49\linewidth}
\centering
\includegraphics[width=6cm]{./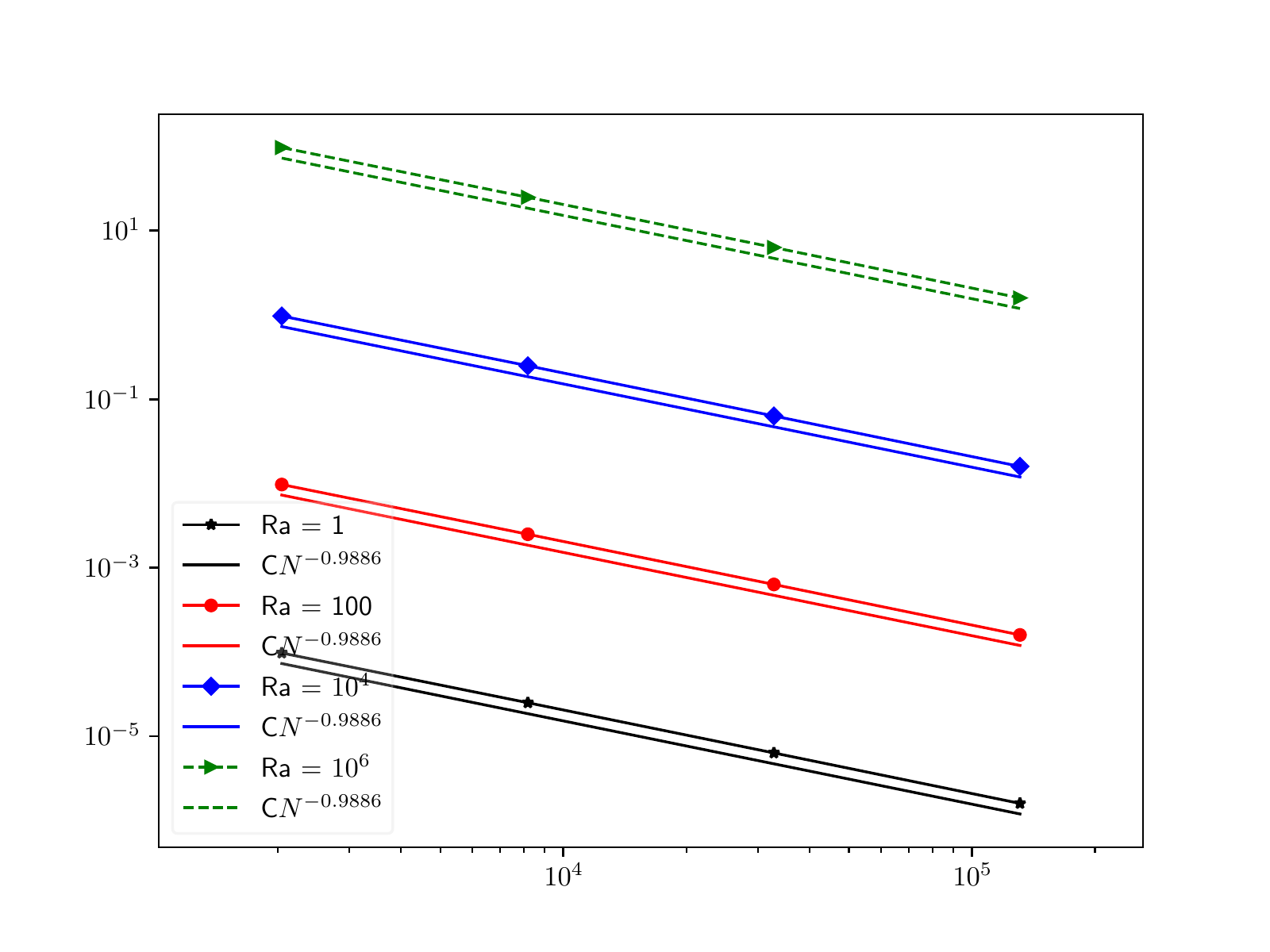}
%\caption{$k=2$.}
\end{minipage}%
}%
\subfigure[Pressure-robust VEM~\eqref{eq:mixedvempressrobust1}-\eqref{eq:mixedvempressrobust2}.]{
\begin{minipage}[t]{0.49\linewidth}
\centering
\includegraphics[width=6cm]{./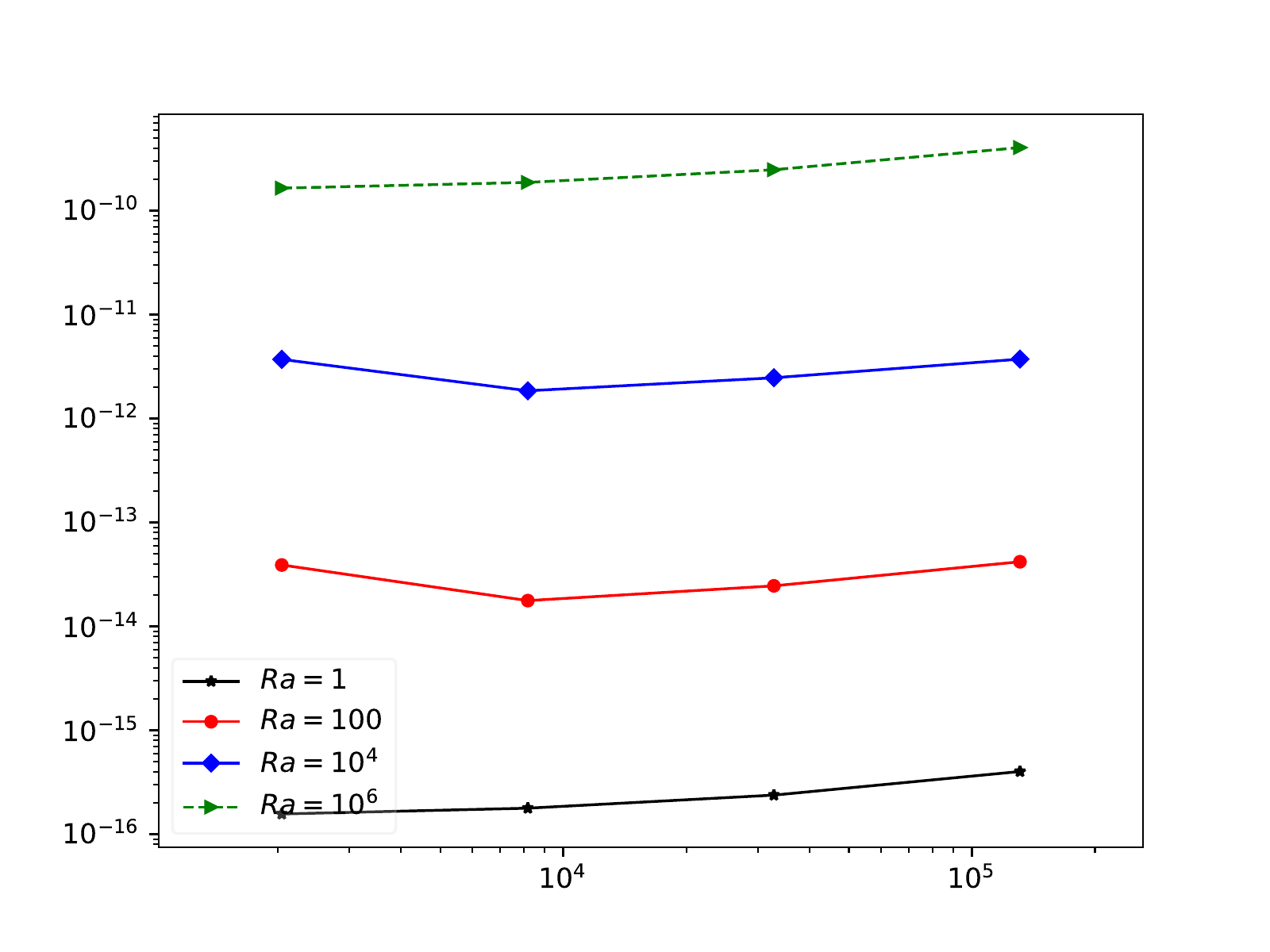}
%\caption{$k=3$.}
\end{minipage}%
}%
\centering
\caption{Error $\| \boldsymbol \varepsilon(\boldsymbol u) - \boldsymbol \varepsilon_h(\boldsymbol \Pi_h \boldsymbol u_h) \|_0$ of Example~\ref{ex:1} with $k=2$.}
\label{fig:ex1strian}
\end{figure}

\begin{example}\label{ex:5}
   Consider the Stokes problem~\eqref{eq:stokes} on the L-shaped domain $\Omega=(-1, 1) \times (-1, 1)
    \setminus[0, 1) \times (-1, 0]$. The exact solution is taken as
     \begin{equation*}
        \begin{aligned}
            \bs u & = (2(x^3 - x)^2(3y^2 - 1)(y^3 - y), 
            (3x^2 - 1)(-2x^3 + 2x)(y^3 -y)^2)^{\intercal}, \\
            p & = \frac{1}{x^2 +1} - \frac{\pi}{4}.
        \end{aligned}
    \end{equation*}
\end{example}

The exact solution $(\bs u, p)$ is smooth although the L-shaped domain $\Omega$
is nonconvex.  We present the polygonal mesh and the corresponding numerical
velocity flow with $k=2$ in Figure~\ref{fig:ex5mesh}.  By the numerical results
in Table \ref{tablex5p2hexagon},  we can see that $\| p - \tilde p_h\|_0=O(h)$,
$\| \boldsymbol \varepsilon(\boldsymbol u) - \boldsymbol
\varepsilon_h(\boldsymbol \Pi_h \tilde{\boldsymbol u}_h) \|_0=O(h^{2})$ and $\|
p - p_h\|_0=O(h^{2})$, which coincide with the theoretical error estimates in
Theorem~\ref{thm:energyerror} and Theorem~\ref{thm:equivalence}.  The
convergence rates of $\|\boldsymbol u - \boldsymbol \Pi_h\boldsymbol u_h\|_0=\|
\boldsymbol u - \boldsymbol \Pi_h \tilde{\boldsymbol u}_h\|_0=O(h^{3})$ are
higher than the optimal ones on the L-shaped domain, which is probably caused by
the uniform meshes. To make the article more concise, here we only show the
numerical results of $k = 2$. For $ k> 2$, one can run the test
script, named {\em StokesRDFNCVEM2d\_example.py}, in directory of {\em FEALPy/example} \cite{fealpy}.

%\begin{figure}[htbp]
%\centering
%\includegraphics[scale=0.4]{./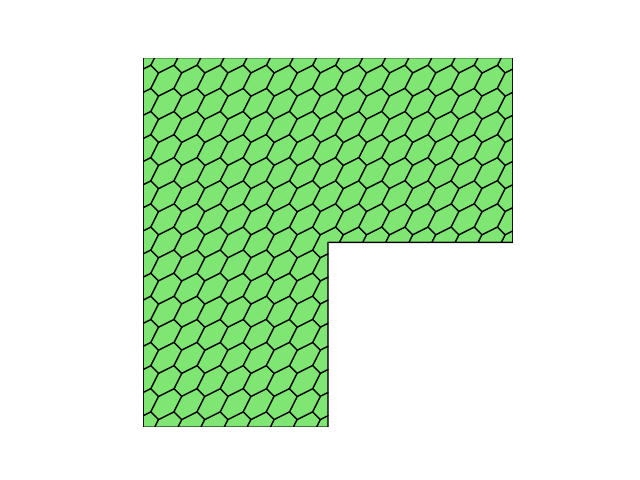}
%\caption{Mesh of Example~\ref{ex:5}.}\label{fig:ex5hexagon}
%\end{figure}

\begin{figure}[htbp]
\centering
\subfigure[Polygonal mesh.]{
\begin{minipage}[t]{0.49\linewidth}
\centering
\includegraphics[width=6cm]{./figures/Stokespde4mesh}
%\caption{$k=2$.}
\end{minipage}%
}%
\subfigure[Numerical velocity.]{
\begin{minipage}[t]{0.49\linewidth}
\centering
\includegraphics[width=6cm]{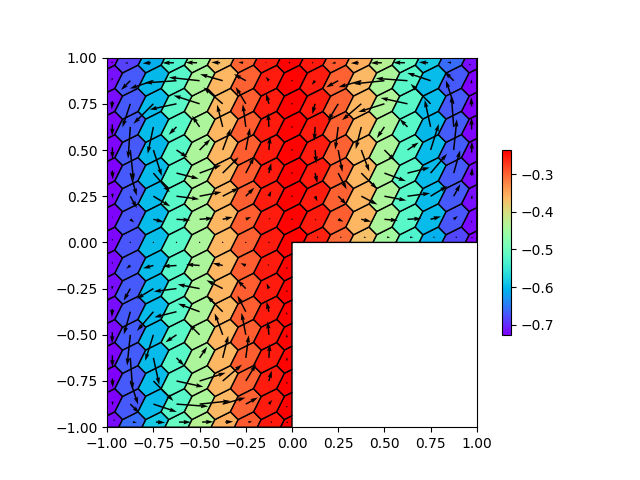}
%\caption{$k=3$.}
\end{minipage}%
}%
\centering
\caption{Mesh for L-shaped domain and numerical velocity of Example~\ref{ex:5} with $k=2$.
}
\label{fig:ex5mesh}
\end{figure}

%% example 5, p = 2, hexagon

\begin{table}[htbp]
\begin{center}
{\small
    \caption{Numerical results for Example~\ref{ex:5} with $k=2$.}
\begin{tabular}[c]{|c|c|c|c|c|}
\hline
$\#\mathcal T_h$ &   65 &  225 &  833 & 3201 \\
\hline
$\| \boldsymbol u - \boldsymbol \Pi_h\tilde{\boldsymbol u}_h\|_0$ & 3.5827e-03 & 6.6167e-04 & 9.2871e-05 & 1.2026e-05
\\\hline
Order & -- & 2.44 & 2.83 & 2.95
\\\hline
$\| p - \tilde p_h\|_0$ & 6.5700e-02 & 3.3869e-02 & 1.7176e-02 & 8.6490e-03
\\\hline
Order & -- & 0.96 & 0.98 & 0.99
\\\hline
$\| \boldsymbol \varepsilon(\boldsymbol u) - \boldsymbol \varepsilon_h(\boldsymbol \Pi_h \tilde{\boldsymbol u}_h) \|_0$ & 6.7184e-02 & 2.3092e-02 & 6.8821e-03 & 1.8805e-03
\\\hline
Order & -- & 1.54 & 1.75 & 1.87
\\\hline
$\|\boldsymbol u - \boldsymbol \Pi_h\boldsymbol u_h\|_0$ & 3.5827e-03 & 6.6167e-04 & 9.2871e-05 & 1.2026e-05
\\\hline
Order & -- & 2.44 & 2.83 & 2.95
\\\hline
$\| p - p_h\|_0$ & 1.4686e-02 & 4.2207e-03 & 1.0318e-03 & 2.2019e-04
\\\hline
Order & -- & 1.8  & 2.03 & 2.23
\\\hline
\end{tabular}\label{tablex5p2hexagon}
}
\end{center}
\end{table}

\bibliographystyle{siamplain}
\bibliography{paper}

\begin{thebibliography}{10}

\bibitem{AntoniettiBeiraoMoraVerani2014}
{\sc P.~F. Antonietti, L.~Beir{\~a}o~da Veiga, D.~Mora, and M.~Verani}, {\em A
  stream virtual element formulation of the {S}tokes problem on polygonal
  meshes}, SIAM J. Numer. Anal., 52 (2014), pp.~386--404.

\bibitem{ApelKempfLinkeMerdon2020}
{\sc T.~Apel, V.~Kempf, A.~Linke, and C.~Merdon}, {\em A nonconforming
  pressure-robust finite element method for the {S}tokes equations on
  anisotropic meshes}, arXiv preprint arXiv:2002.12127,  (2020).

\bibitem{Arnold2018}
{\sc D.~N. Arnold}, {\em Finite element exterior calculus}, Society for
  Industrial and Applied Mathematics (SIAM), Philadelphia, PA, 2018.

\bibitem{ArnoldFalkWinther2006}
{\sc D.~N. Arnold, R.~S. Falk, and R.~Winther}, {\em Finite element exterior
  calculus, homological techniques, and applications}, Acta Numer., 15 (2006),
  pp.~1--155.

\bibitem{AyusodeDiosLipnikovManzini2016}
{\sc B.~Ayuso~de Dios, K.~Lipnikov, and G.~Manzini}, {\em The nonconforming
  virtual element method}, ESAIM Math. Model. Numer. Anal., 50 (2016),
  pp.~879--904.

\bibitem{BeiraodaVeigaDassiVacca2020}
{\sc L.~Beir\~{a}o~da Veiga, F.~Dassi, and G.~Vacca}, {\em The {S}tokes complex
  for virtual elements in three dimensions}, Math. Models Methods Appl. Sci.,
  30 (2020), pp.~477--512.

\bibitem{BeiraodaVeigaLovadinaVacca2017}
{\sc L.~Beir\~ao~da Veiga, C.~Lovadina, and G.~Vacca}, {\em Divergence free
  virtual elements for the {S}tokes problem on polygonal meshes}, ESAIM Math.
  Model. Numer. Anal., 51 (2017), pp.~509--535.

\bibitem{BeiraodaVeigaLovadinaVacca2018}
{\sc L.~Beir\~ao~da Veiga, C.~Lovadina, and G.~Vacca}, {\em Virtual elements
  for the {N}avier-{S}tokes problem on polygonal meshes}, SIAM J. Numer. Anal.,
  56 (2018), pp.~1210--1242.

\bibitem{BernardiCostabelDaugeGirault2016}
{\sc C.~Bernardi, M.~Costabel, M.~Dauge, and V.~Girault}, {\em Continuity
  properties of the inf-sup constant for the divergence}, SIAM J. Math. Anal.,
  48 (2016), pp.~1250--1271.

\bibitem{BoffiBrezziFortin2013}
{\sc D.~Boffi, F.~Brezzi, and M.~Fortin}, {\em Mixed finite element methods and
  applications}, Springer, Heidelberg, 2013.

\bibitem{Brenner2004a}
{\sc S.~C. Brenner}, {\em Korn's inequalities for piecewise {$H^1$} vector
  fields}, Math. Comp., 73 (2004), pp.~1067--1087.

\bibitem{BrennerScott2008}
{\sc S.~C. Brenner and L.~R. Scott}, {\em The mathematical theory of finite
  element methods}, Springer, New York, third~ed., 2008.

\bibitem{BrennerSung2018}
{\sc S.~C. Brenner and L.-Y. Sung}, {\em Virtual element methods on meshes with
  small edges or faces}, Math. Models Methods Appl. Sci., 28 (2018),
  pp.~1291--1336.

\bibitem{CangianiGyryaManzini2016}
{\sc A.~Cangiani, V.~Gyrya, and G.~Manzini}, {\em The nonconforming virtual
  element method for the {S}tokes equations}, SIAM J. Numer. Anal., 54 (2016),
  pp.~3411--3435.

\bibitem{CharnyiHeisterOlshanskiiRebholz2017}
{\sc S.~Charnyi, T.~Heister, M.~A. Olshanskii, and L.~G. Rebholz}, {\em On
  conservation laws of {N}avier-{S}tokes {G}alerkin discretizations}, J.
  Comput. Phys., 337 (2017), pp.~289--308.

\bibitem{ChenHuHuang2018}
{\sc L.~Chen, J.~Hu, and X.~Huang}, {\em Fast auxiliary space preconditioners
  for linear elasticity in mixed form}, Math. Comp., 87 (2018), pp.~1601--1633.

\bibitem{ChenHuang2020}
{\sc L.~Chen and X.~Huang}, {\em Nonconforming virtual element method for
  {$2m$}th order partial differential equations in {$\Bbb{R}^n$}}, Math. Comp.,
  89 (2020), pp.~1711--1744.

\bibitem{ChenWang2019}
{\sc L.~Chen and F.~Wang}, {\em A divergence free weak virtual element method
  for the {S}tokes problem on polytopal meshes}, J. Sci. Comput., 78 (2019),
  pp.~864--886.

\bibitem{CrouzeixRaviart1973}
{\sc M.~Crouzeix and P.-A. Raviart}, {\em Conforming and nonconforming finite
  element methods for solving the stationary {S}tokes equations. {I}}, Rev.
  Fran\c caise Automat. Informat. Recherche Op\'erationnelle S\'er. Rouge, 7
  (1973), pp.~33--75.

\bibitem{Duran2012}
{\sc R.~G. Dur\'{a}n}, {\em An elementary proof of the continuity from
  {$L^2_0(\Omega)$} to {$H^1_0(\Omega)^n$} of {B}ogovskii's right inverse of
  the divergence}, Rev. Un. Mat. Argentina, 53 (2012), pp.~59--78.

\bibitem{FrerichsMerdon2020}
{\sc D.~Frerichs and C.~Merdon}, {\em Divergence-preserving reconstructions on
  polygons and a really pressure-robust virtual element method for the {S}tokes
  problem}, arXiv preprint arXiv:2002.01830,  (2020).

\bibitem{HansboLarson2003}
{\sc P.~Hansbo and M.~G. Larson}, {\em Discontinuous {G}alerkin and the
  {C}rouzeix-{R}aviart element: application to elasticity}, M2AN Math. Model.
  Numer. Anal., 37 (2003), pp.~63--72.

\bibitem{Huang2020}
{\sc X.~Huang}, {\em Nonconforming virtual element method for {$2m$}-th order
  partial differential equations in {$R^n$} with {$m>n$}}, Calcolo, 57 (2020),
  pp.~Paper No. 42, 38.

\bibitem{JohnLinkeMerdonNeilanEtAl2017}
{\sc V.~John, A.~Linke, C.~Merdon, M.~Neilan, and L.~G. Rebholz}, {\em On the
  divergence constraint in mixed finite element methods for incompressible
  flows}, SIAM Rev., 59 (2017), pp.~492--544.

\bibitem{Linke2014}
{\sc A.~Linke}, {\em On the role of the {H}elmholtz decomposition in mixed
  methods for incompressible flows and a new variational crime}, Comput.
  Methods Appl. Mech. Engrg., 268 (2014), pp.~782--800.

\bibitem{LiuLiChen2017}
{\sc X.~Liu, J.~Li, and Z.~Chen}, {\em A nonconforming virtual element method
  for the {S}tokes problem on general meshes}, Comput. Methods Appl. Mech.
  Engrg., 320 (2017), pp.~694--711.

\bibitem{LiuLiNie2020}
{\sc X.~Liu, R.~Li, and Y.~Nie}, {\em A divergence-free reconstruction of the
  nonconforming virtual element method for the {S}tokes problem}, Comput.
  Methods Appl. Mech. Engrg., 372 (2020), pp.~113351, 21.

\bibitem{Nedelec1980}
{\sc J.-C. N{\'e}d{\'e}lec}, {\em Mixed finite elements in {${\bf R}^{3}$}},
  Numer. Math., 35 (1980), pp.~315--341.

\bibitem{RaviartThomas1977}
{\sc P.-A. Raviart and J.~M. Thomas}, {\em A mixed finite element method for
  2nd order elliptic problems}, in Mathematical aspects of finite element
  methods ({P}roc. {C}onf., {C}onsiglio {N}az. delle {R}icerche ({C}.{N}.{R}.),
  {R}ome, 1975), Springer, Berlin, 1977, pp.~292--315. Lecture Notes in Math.,
  Vol. 606.

\bibitem{fealpy}
{\sc H.~Wei and Y.~Huang}, {\em Fealpy: Finite element analysis library in
  python}.
\newblock https://github.com/weihuayi/fealpy, Xiangtan University, 2017-2021.

\bibitem{ZhaoZhangMaoChen2019}
{\sc J.~Zhao, B.~Zhang, S.~Mao, and S.~Chen}, {\em The divergence-free
  nonconforming virtual element for the {S}tokes problem}, SIAM J. Numer.
  Anal., 57 (2019), pp.~2730--2759.

\end{thebibliography}
\end{document}